\let\vec\relax 
\DeclareMathAccent{\vec}{\mathord}{letters}{"7E} 
\newtheorem{lemma}{Lemma}
\newtheorem{proof}{Proof}
\newtheorem{remark}{Remark}
\newtheorem{definition}{Definition}
\newtheorem{theorem}{Theorem}
\colorlet{Mycolor1}{green!20!orange!80!}
\newcommand{\comments}[1]{\footnote{\textcolor{blue}{\textit{#1}}}}
\newcommand{\AL}[1]{\textcolor{black}{#1}}
\newcommand{\NGU}[1]{\textcolor{black}{#1}}
\newcommand{\NGUNEW}[1]{\textcolor{black}{#1}}
\newcommand{\VS}[1]{\textcolor{black}{#1}}
\newcommand{\comments}[1]{}
\newcommand{\AL}[1]{#1}
\newcommand{\NGU}[1]{#1}
\newcommand{\NGUNEW}[1]{#1}
\newcommand{\VS}[1]{#1}
\providecommand{\keywords}[1]
{
  \small	
  \textbf{\textit{Keywords -}} #1
}
\begin{document}

\title{Distributionally robust chance-constrained Markov decision processes \thanks{This research was supported by DST/CEFIPRA Project No. IFC/4117/DST-CNRS-5th call/2017-18/2 and CNRS Project No. AR/SB:2018-07-440.}}

\author{Hoang Nam Nguyen\thanks{University Paris Saclay, CNRS, CentraleSupelec, L2S, Bat Breguet, 3 rue Joliot Curie, 91190, Gif-sur-Yvette, France (\href{mailto:hoang-nam.nguyen3@centralesupelec.fr}{hoang-nam.nguyen3@centralesupelec.fr}).}
\and Abdel Lisser \thanks{University Paris Saclay, CNRS, CentraleSupelec, L2S, Bat Breguet, 3 rue Joliot Curie, 91190, Gif-sur-Yvette, France (\href{mailto:abdel.lisser@centralesupelec.fr}{abdel.lisser@centralesupelec.fr}), Corresponding Author}
\and Vikas Vikram Singh\thanks{Department of Mathematics, Indian Institute of Technology Delhi, Hauz Khas, New Delhi, 110016, India (\href{mailto:vikassingh@maths.iitd.ac.in}{vikassingh@maths.iitd.ac.in}).}}

\maketitle

\begin{abstract}
\VS{Markov decision process (MDP) is a decision making framework where a decision maker is interested in maximizing \AL{the} expected discounted value of a stream of rewards received at future stages at various states which are visited according to a controlled Markov chain. Many algorithms including linear programming methods are available in the literature to compute an optimal policy when the rewards and transition probabilities are deterministic.  In this paper, we consider \AL{an MDP} problem where the transition probabilities are known and the reward vector is a random  vector whose distribution is partially known. We formulate the MDP problem using distributionally robust chance-constrained optimization framework 
under various types of moments based uncertainty sets, and  statistical-distance based uncertainty sets defined using $\phi$-divergence and Wasserstein distance metric. For each type of uncertainty set, we consider the case where \AL{a} random reward vector has either a full support or a nonnegative support.  For the case of full support, we show that the distributionally robust chance-constrained Markov decision process is equivalent to a second-order cone programming problem for the moments and $\phi$-divergence distance based uncertainty sets, and it is equivalent to a mixed-integer second-order cone programming problem for an Wasserstein distance based uncertainty set. 
 For the case of nonnegative support, it is equivalent to a copositive  \AL{optimization} problem  and a biconvex optimization problem for the moments based uncertainty sets and Wasserstein distance based uncertainty set, respectively. As an application, we study  a machine replacement problem and illustrate numerical experiments on randomly generated instances.   
 }
\end{abstract}
\keywords{Markov decision processes, Distributionally robust chance-constrained optimization, Second-order cone programming, Copositive optimization, Mix-integer second-order cone programming, Biconvex optimization}
\section{Introduction}\label{Intro}
An MDP is a decision making framework to model the performance of a  stochastic system which evolves over  time according to a controlled Markov chain. We consider the case where the system has a finite number of states. At time $t=0$, the system is at some initial state $s_0\in S$, where $S$ is a finite state space, according to an initial distribution $\gamma$,  and a decision maker chooses an action $a_0\in A(s_0)$, where $A(s_0)$ denotes the set of finite number of actions available to the decision maker  at state $s_0$. As a consequence a reward $R(s_0,a_0)$ is earned and at time $t=1$, the system moves to a new state $s_1$ with probability $p(s_0,a_0,s_1)$. The same thing repeats at time $t=1$ and it continues for the infinite horizon. The decision taken at time $t$, which could be deterministic or randomized, may depend on the history $h_t$ at time $t$, where 
$h_t = (s_0, a_0, s_1, \ldots, s_{t-1},a_{t-1}, s_t)$. 
Let $H_t$ be the set of all possible histories at time $t$. A history dependent decision rule $f_t$ at time $t$ is defined as $f_t(h_t)\in \wp(A(s_t))$ for every history $h_t$ with final state $s_t$,  where  
$\wp(A(s_t))$ denotes the set of probability distributions on the action set $A(s_t)$. A sequence of history dependent decision rules $f^h=(f_t)_{t=0}^\infty$ is called a history dependent policy. The policy is called Markovian if 
each  $f_t$ in the sequence $(f_t)_{t=0}^\infty$ depends only on the state at time $t$.  A Markovian policy $(f_t)_{t=0}^\infty$  is called a stationary policy if there exists a decision rule $f$ such that $f_t=f$ for all $t$.
\VS{Therefore, a  stationary policy can be represented as a sequence of \AL{the} same decision rules $(f,f,\dots)$ and with some abuse of notations we can denote it as $f$, and  define $f=(f(s))_{s\in S}$ such that $f(s)\in \wp(A(s))$  for every $s\in S$.} As per a stationary policy $f$, whenever the Markov chain visits state $s$, the decision maker chooses an action $a$ with probability $f(s,a)$.
We denote the set of all history dependent and stationary policies by $PO_{HD}$ and $PO_S$, respectively. 
A history dependent policy $f^h \in PO_{HD}$ and \AL{an} initial distribution $\gamma$  define a probability measure $P_{\gamma}^{f^h}$ over the state and action trajectories, and $E_{\gamma}^{f^h}$ denotes the expectation operator corresponding to \AL{the} probability measure $P_{\gamma}^{f^h}$. 
For a given policy $f^h$
and an initial distribution $\gamma$, the expected discounted reward at a discount factor $\alpha\in(0,1)$ is defined as 
\cite{altman1999constrained, puterman1994discrete}
\begin{align}\label{exp-disc}
  V_{\alpha}(\gamma, f^h) &= \VS{(1-\alpha)} \mathbb{E}_{\gamma}^{f^h} \left( \sum_{t=0}^{\infty} \alpha^{t} R(X_t,A_t)\right), \nonumber\\
   &= \sum_{s\in S}\sum_{a\in A(s)} g_{\alpha}(\gamma,f^h;s,a)R(s,a),
\end{align}
where $X_t$ and $A_t$ represent \AL{the} state and \AL{the} action at time $t$, respectively. \VS{For a given policy $f^h$, the set} $\{g_{\alpha}(\gamma,f^h;s,a)\}_{(s,a)}$ is the occupation measure  defined by
\begin{align*}
    g_{\alpha}(\gamma,f^h;s,a)&= \VS{(1-\alpha)}\sum_{t=0}^\infty \alpha^{t} P_{\gamma}^{f^h}(\mathbb{X}_t=s,\mathbb{A}_t=a),\ \forall\ s\in S, \ a\in A(s).
    \end{align*}
When \AL{the} running rewards and \AL{the} transition probabilities are stationary, i.e., $R(X_t=s, A_t= a) = R(s,a)$ and $P(X_{t+1}=s'|X_t=s, A_t=a) = p(s, a, s')$ \VS{for all $t$}, we can restrict to stationary policies without loss of optimality   \cite{altman1999constrained, puterman1994discrete}.
It follows from Theorem 3.2 on p. 28 in \cite{altman1999constrained} that the set of occupation measures corresponding to history dependent policies is equal to the set of occupation measures corresponding to stationary policies
and further it is equal to the  set
\begin{gather*}
\mathcal{Q}_{\alpha}(\gamma)=\Bigg\{\rho\in \mathbb{R}^{|\mathcal{K}|} \ \bigl\lvert \, \sum_{(s,a)\in \mathcal{K}}\rho(s,a)\Big(\delta(s', s)
 -\alpha p(s,a, s')\Big) 
= (1-\alpha) \gamma(s'),\ \forall \ s'\in S, \\ \rho(s,a)\geq 0, \ \forall \ s\in S,\ a\in A(s)\Bigg\}, 
\end{gather*}
such that the \VS{value of} \AL{the} expected discounted reward defined by \eqref{exp-disc} remains the same; $\delta(s',s)$ is the Kronecker delta \NGU{and $\mathcal{K} = \left\{(s,a) \mid s \in S, a \in A(s)\right\}$}. \VS{Therefore, the optimal policy of the MDP problem can be obtained by solving the following linear programming problem \cite{puterman1994discrete}}
\begin{equation} \label{equi_lp}
     \max_{\rho\in \mathcal{Q}_{\alpha}(\gamma) } \rho^\text{T} R,
\end{equation}
where $R=(R(s,a))_{s\in S, a\in A(s)}$ is a running reward vector and $\text{T}$ denotes the transposition.  
If $\rho^*$ is an optimal solution of \eqref{equi_lp}, the stationary optimal policy $f^*$ can be defined as 
\[
f^*(s,a)= \dfrac{\rho^*(s,a)}{\sum_{a \in A(s)}\rho^*(s,a)}, \ \forall \ s\in S,\ a\in A(s),
\]
whenever the denominator is nonzero \big(if it is zero, we choose $f^*(s)$ arbitrarily from $\wp(A(s))$\big) \cite{altman1999constrained}. In practice, the MDP model parameters $R(\cdot)$ and $p(\cdot)$ are not known \AL{in advance} and are estimated from historical data. This leads to errors in the optimal policies  \cite{mannor2007bias}. Most efforts to take into account this uncertainty  focused on the study of robust MDPs where the rewards or the transition probabilities are known to belong to a prespecified uncertainty set \cite{iyengar2005robust, nilim2005robust, varagapriya2022constrained, white1994markov,   wiesemann2013robust}.
However, Delage and Mannor \cite{delage2010percentile} showed that the robust MDP approach usually leads to conservative policies. For this reason,  a chance-constrained Markov decision process (CCMDP) was introduced in \cite{delage2010percentile},  where the controller obtains the expected discounted reward with certain confidence. In \cite{delage2010percentile}, the case of random rewards and random transition probabilities are considered separately and it is shown that a CCMDP is equivalent to a second-order cone programming (SOCP) problem when \AL{the} running reward vector follows a multivariate normal distribution and \VS{the} transition probabilities are exactly known. When \VS{the} transition probabilities follow  Dirichlet distribution and  \VS{the} running rewards are exactly known, the CCMDP problem becomes intractable and \AL{the} optimal policies can be computed using approximation methods. \VS{Varagapriya et al. \cite{VagapriyaCMDP} considered a CMDP problem with joint chance constraint where \AL{the} running cost vectors are random vectors and \AL{the} transition probabilities are known.  They proposed two SOCP based approximations which give  upper and lower bounds to the CMDP problem if the cost vectors follow multivariate elliptical distributions and \AL{the} dependence among \AL{the} constraints is driven by a Gumbel-Hougaard copula.}
 
 \VS{ In many practical situations, it is often the case that only a partial information about the underlying distribution is available based on historical data. 
 In that case,  a distributionally robust approach, is used to model the uncertainties, which assumes  that the true distribution belongs to an uncertainty set based on its partially available information. 
 Such an approach has been used in modelling the uncertainties of many optimization and game problems \cite{jiang2016data,liu2022distributionally, singh2017distributionally}.
 There are \AL{at least} two popular ways to construct an uncertainty set for the distribution 
 of the uncertain parameters. The first one is based on the partial information on moments of the true distribution and the second one is based on the 
 statistical distance between the true distribution and a nominal distribution.
 The moments-based uncertainty sets assume certain conditions on the first two moments \cite{cheng2014distributionally,delage2010distributionally, popescu2007robust}.
The statistical distance-based uncertainty sets  contain all the distributions which lie inside a ball of small radius and center at a nominal distribution which is usually considered to be an empirical distribution or a normal  distribution \cite{esfahani2018data, jiang2016data}. To define a distance between the distributions, either a $\phi-$divergence \cite{ben2013robust, jiang2016data} or Wasserstein distance metric is used
\cite{esfahani2018data, gao2016distributionally, zhao2018data}.}

In this paper, we consider an infinite horizon MDP with discounted payoff criterion defined in Section \ref{Intro} where the reward vector is a random vector and the transition probabilities are known. 
 The distribution of the reward vector is not completely known and it is assumed to belong to a given uncertainty set.
 We formulate the random discounted reward \AL{with} a distributionally robust chance constraint which  guarantees the maximum reward for a given policy with at least a given level of confidence. 
 We call this class of MDP as a distributionally robust chance-constrained Markov decision process (DRCCMDP). 
 The random reward vector has either a  full support or a nonnegative support.    We consider both moments and statistical distance based uncertainty sets. The main contributions of the paper are  as follows.
 \begin{enumerate}
     \item We consider three different types of moments based uncertainty sets based on the full/partial information on the first two moments of the random reward vector. For the case of full support and nonnegative support, a DRCCMDP problem is equivalent to an SOCP problem and a copositive \AL{optimization} problem, respectively.
     \item We consider four different types of $\phi$-divergences to construct statistical distance based uncertainty sets. We show that a DRCCMDP problem is equivalent to an SOCP problem when the nominal distribution is a normal distribution.
     \item We consider the nominal distribution to be an empirical distribution when  statistical distance based uncertainty set is defined \AL{with} Wasserstein distance metric. For the case of full support and nonnegative support, we show that a DRCCMDP problem is equivalent to a mixed integer second-order cone programming (MISOCP) problem and a biconvex optimization problem, respectively. 
     \item We illustrate our theoretical results on a machine replacement problem \cite{delage2010percentile}.
 \end{enumerate}
\VS{
The paper is organized as follows. In Section \ref{DRCCMDP-section}, we define a DRCCMDP under a discounted payoff criterion. Section \ref{moment-based} contains a DRCCMDP under moments based uncertainty sets and \AL{their} equivalent reformulations for the case of full and nonnegative supports. A DRCCMDP under statistical distance based uncertainty sets defined using $\phi$-divergence metric and Wasserstein distance metric and \AL{their} equivalent reformulations are presented in Section \ref{stat-dist-un}.
The numerical results on a machine replacement problem is given in Section \ref{machine_rep}. We conclude the paper in Section \ref{conclusion}.
}


\section{Distributionally robust chance constrained Markov decision process}\label{DRCCMDP-section}

We consider an infinite horizon MDP defined in Section \ref{Intro} where the transition probabilities are exactly known and the running reward vector is a random vector defined on a probability space $(\Omega, \mathcal{F}, \mathbb{P})$ \VS{which is denoted as $\hat{R}$. Therefore, for each realization $\omega\in \Omega$, $\hat{R}(s,a, \omega)$ represents a real valued reward received at state $s$ when an action $a$ is taken.}  
We assume that the random vector $\hat{R}$ does not vary with time. Since $\hat{R}$ is a random vector,  for a given policy $f^h$ and initial distribution $\gamma$, the expected discounted 
reward defined by \eqref{exp-disc} becomes a random variable. We consider the case where the controller is interested in a maximum discounted reward which can be obtained with at least a given confidence level $(1-\epsilon)$, where $\epsilon \in (0,1)$. This leads to the following CCMDP problem  
\begin{align}\label{CCMDP}
     &\sup_{y \in \mathbb{R},\ f^h \in F_{HD}} y \nonumber\\
     \mbox{s.t.} \,\,\,\,\, &\mathbb{P}\left(V_\alpha(s, f^h) \geq y \right) \geq 1-\epsilon. 
\end{align}
Since the transition probabilities are exactly known, it follows from the discussion given in Section \ref{Intro} that we can represent the CCMDP problem \eqref{CCMDP} equivalently in terms of decision vector $(y, \rho)$ as follows
\begin{align}\label{CCMDP-equi}
     &\sup\,\,\, y \nonumber\\
     \mbox{s.t.} \,\,\,\,\, & (\text{i}) \quad \mathbb{P}\left(\rho^\text{T} \hat{R} \ge y \right) \ge 1-\epsilon,\nonumber\\
     & (\text{ii}) \quad \rho\in \mathcal{Q}_\alpha(\gamma).
\end{align}
If then vector $\hat{R}$ follows a multivariate normal distribution, the optimization problem \eqref{CCMDP-equi} is equivalent to an SOCP problem \cite{delage2010percentile}. The above result can be generalized for elliptically symmetric distributions because the linear chance constraint $(\text{i})$ present in \eqref{CCMDP-equi} is equivalent to a second order cone constraint \cite{Henrionpaper}.

However, in most of the practical situations, we only have partial information about the underlying probability distributions. Such situations can be handled \AL{with the}  distributionally robust optimization approach, i.e., we assume that the distribution of $\hat{R}$ belongs to an uncertainty set. This leads to the following DRCCMDP problem 
\begin{align}\label{DRCCMDP-opt-ex}
     &\sup\,\,\, y \nonumber\\
     \mbox{s.t.} \,\,\,\,\, & (\text{i})\quad \inf_{F \in \mathcal{D}}\mathbb{P}\AL{_F}\left(\rho^\text{T} \hat{R} \geq y \right) \geq 1-\epsilon,\nonumber\\
     & (\text{ii})\quad \rho\in \mathcal{Q}_\alpha(\gamma),
\end{align}
where  $F$ is the distribution of $\hat{R}$ and  $\mathcal{D}$ is a given uncertainty set. \VS{The first constraint of \eqref{DRCCMDP-opt-ex} can be written as 
\[
\sup_{F \in \mathcal{D} }\mathbb{P}\AL{_F}\left(\rho^\text{T} \hat{R} < y \right) \le \epsilon.
\]
}
Note that $\mathbb{P}\AL{_F} (\rho^\text{T} \hat{R} \leq y-\theta) \leq \mathbb{P}\AL{_F} (\rho^\text{T} \hat{R} < y) \leq \mathbb{P}\AL{_F} (\rho^\text{T} \hat{R} \leq y)$ for every $\theta > 0$. \VS{Therefore, we can replace
$\sup_{F \in \mathcal{D} }\mathbb{P}\AL{_F}\left(\rho^\text{T} \hat{R} < y \right)$ by $\sup_{F \in \mathcal{D} }\mathbb{P}\AL{_F}\left(\rho^\text{T} \hat{R} \le y \right)$.}
Then, the problem \eqref{DRCCMDP-opt-ex} is equivalent to the following problem
\begin{align}\label{DRCCMDP-opt}
    &\sup \quad y \nonumber\\
     \mbox{s.t.} \,\,\,\,\, & (\text{i})\quad \sup_{F \in \mathcal{D} }\mathbb{P}\AL{_F}\left(\rho^\text{T} \hat{R} \leq y \right) \leq \epsilon,\nonumber\\
     & (\text{ii})\quad  \rho\in\mathcal{Q}_{\alpha}(\gamma).
\end{align}
\VS{
In the following sections, we study different types of uncertainty sets of $\hat{R}$ which are defined  using i) partial information of moments of $\hat{R}$, ii) $\phi$-divergence distance, and iii) Wasserstein distance.
 For each uncertainty set, we consider the cases of full and nonnegative supports of $\hat{R}$. We derive equivalent reformulations of DRCCMDP problem \eqref{DRCCMDP-opt-ex} (or \eqref{DRCCMDP-opt} equivalently) for each uncertainty set.
}

\section{Moments based uncertainty sets} \label{moment-based}
\VS{Let $\mu\in \mathbb{R}^{\mathcal{|K|}}$ \AL{the mean vector} and $\Sigma\succ 0$ a $\mathcal{|K|}\times \mathcal{|K|}$ positive definite matrix.} 
\NGU{We consider 3 types of moments based uncertainty sets of the distribution of $\hat{R}$ defined as follows:}
\VS{
\begin{enumerate}
    \item 
\textbf{Uncertainty set with known mean and known covariance matrix:}
The uncertainty set of the distribution of $\hat{R}$ in this case is defined by 
\begin{equation}\label{definition D1}
\mathcal{D}_1(\varphi,\mu,\Sigma) = \left\{F \in \mathcal{M}^+ \left|
\begin{array}{l}
\mathbb{E}(\mathbf{1}_{\left\{\hat{R}\in \varphi\right\}})=1,\\
\mathbb{E}(\hat{R}) = \mu,\\
\mathbb{E}[(\hat{R}-\mu)(\hat{R}-\mu)^{\text{T}}] = \Sigma.
\end{array}
\right.
\right\},
\end{equation}
\item 
\textbf{Uncertainty set with known mean and unknown covariance matrix:}
The uncertainty set of the distribution of $\hat{R}$ in this case is defined by
\begin{equation}\label{Uncertainty set known unknown}
\mathcal{D}_2(\varphi,\mu,\Sigma,\delta_0) = \left\{F \in \mathcal{M}^+ \left|
\begin{array}{l}
\mathbb{E}(\mathbf{1}_{\left\{\hat{R} \in \varphi\right\}}) = 1,\\
\mathbb{E}(\hat{R})=\mu,\\
\mathbb{E}[(\hat{R}-\mu)(\hat{R}-\mu)^{\text{T}}] \preceq \delta_0 \Sigma.
\end{array}
\right.
\right\},
\end{equation}
\item 
\textbf{Uncertainty set with unknown mean and unknown covariance matrix:}
The uncertainty set of the distribution of $\hat{R}$ in this case is defined by
\begin{equation}\label{Uncertainty set unknown unknown}
\mathcal{D}_3(\varphi,\mu,\Sigma,\delta_1,\delta_2) = \left\{F \in \mathcal{M}^+ \left|
\begin{array}{ll}
&\mathbb{E}(\mathbf{1}_{\left\{\hat{R} \in \varphi\right\}}) = 1,\\
&[\mathbb{E}(\hat{R})-\mu]^{\text{T}}\Sigma^{-1}[\mathbb{E}(\hat{R})-\mu] \leq \delta_1,\\
&\mathbb{E}[(\hat{R}-\mu)(\hat{R}-\mu)^{\text{T}}] \preceq \delta_2 \Sigma.
\end{array}
\right.
\right\},
\end{equation}
\end{enumerate}
where $\varphi\subset \mathbb{R}^{|\mathcal{K}|}$ is the support of $\hat{R}$ which we assume to be a convex set, $\mathcal{M}^+$ is the set of all probability measures on $\mathbb{R}^{|\mathcal{K}|}$ with Borel $\sigma-$algebra,    
$\delta_1 \geq 0,\delta_2,  \delta_0 \geq 1$, $\mu \in \text{RI}(\varphi)$; $\text{RI}(\varphi)$ denotes the relative interior of $\varphi$. The notation 
$A \preceq B$ implies that $B-A$ is a  positive semidefinite matrix and $\mathbf{1}_{\left\{\cdot\right\}}$ denotes the indicator function.
}
\subsection{DRCCMDP with moments based uncertainty sets under full support}
\VS{We consider the case when the random vector $\hat{R}$ has full support, i.e., $\varphi = \mathbb{R}^{|\mathcal{K}|}$. We show that the DRCCMDP problem is equivalent to an SOCP problem.}
\begin{theorem}
Consider the DRCCMDP problem \eqref{DRCCMDP-opt-ex} where the distribution of $\hat{R}$ belongs to the uncertainty sets defined by \eqref{definition D1}, \eqref{Uncertainty set known unknown}, \eqref{Uncertainty set unknown unknown}, and the support $\varphi=\mathbb{R}^{|\mathcal{K}|}$. Then, the \emph{DRCCMDP} \eqref{DRCCMDP-opt-ex} can be reformulated equivalently as the following \emph{SOCP}
\begin{align}\label{SOCP known known real full support}
    &\max\,\,\,\, y \nonumber\\
    \mbox{s.t.} \,\,\,\,\,
    & (\emph{i})\quad \mu^{\emph{T}} \rho - \kappa\|\Sigma^{\frac{1}{2}}\rho\|_2 \geq y,\nonumber\\
    & (\emph{ii}) \quad \rho\in\mathcal{Q}_{\alpha}(\gamma),
    \end{align}
where $||\cdot||_2$ denotes the Euclidean norm  and $\kappa$ is a real number whose value for each uncertainty set is given 
 in Table \ref{table of kappa}.
\begin{table}[ht]\small 
\centering
\caption{ Value of $\kappa$ for moments based uncertainty set} \vspace{.2cm}
\begin{tabular}{|*{5}{c|}}
\hline
Uncertainty set & $\mathcal{D}=\mathcal{D}_1(\varphi,\mu,\Sigma)$ & $\mathcal{D}=\mathcal{D}_2(\varphi,\mu,\Sigma,\delta_0)$ & $\mathcal{D}=\mathcal{D}_3(\varphi,\mu,\Sigma,\delta_1,\delta_2)$\\
\hline
$\kappa$ & $\sqrt{\frac{1-\epsilon}{\epsilon}}$ & $\sqrt{\frac{(1-\epsilon)\VS{\delta_0}}{\epsilon}}$ & $\sqrt{\frac{(1-\epsilon)\delta_2}{\epsilon}}+\sqrt{\delta_1}$\\
\hline
\end{tabular}\label{table of kappa}
\end{table}
\end{theorem}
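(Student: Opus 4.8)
The plan is to reduce the distributionally robust chance constraint $(\text{i})$ of \eqref{DRCCMDP-opt} to a purely scalar problem and then invoke a tight one-sided Chebyshev (Cantelli-type) bound. First I would fix $\rho\in\mathcal{Q}_{\alpha}(\gamma)$ and introduce the scalar random variable $Z=\rho^{\text{T}}\hat{R}$. Any $F$ in a moment set induces a law of $Z$ whose first two moments are controlled by those of $\hat{R}$: its mean is $\rho^{\text{T}}\mathbb{E}_F(\hat{R})$ and its second moment about $\mu^{\text{T}}\rho$ equals $\rho^{\text{T}}\mathbb{E}_F[(\hat{R}-\mu)(\hat{R}-\mu)^{\text{T}}]\rho$. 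The key reduction, which relies on $\varphi=\mathbb{R}^{|\mathcal{K}|}$, is that every scalar law with admissible first two moments lifts to a multivariate $F$ in the set: writing $u=\Sigma^{1/2}\rho/\|\Sigma^{1/2}\rho\|_2$ and setting $\hat{R}=\mu+\Sigma^{1/2}(Vu+W_\perp)$, with $V$ the desired scalar coordinate and $W_\perp$ an independent component supported on $u^{\perp}$ chosen to fix the remaining moments, one matches the prescribed mean/covariance data exactly. Hence $\sup_{F\in\mathcal{D}}\mathbb{P}_F(\rho^{\text{T}}\hat{R}\le y)$ equals the corresponding worst-case scalar probability; this is the step that legitimises working one-dimensionally.

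For $\mathcal{D}_1$ I would then apply the tight worst-case identity: for a scalar $Z$ with fixed mean $m$ and variance $v$, $\sup_F\mathbb{P}_F(Z\le y)=\frac{v}{v+(m-y)^2}$ when $m>y$ (and equals $1$ otherwise), the supremum being attained by a two-point law. With $m=\mu^{\text{T}}\rho$ and $v=\rho^{\text{T}}\Sigma\rho=\|\Sigma^{1/2}\rho\|_2^2$, imposing $\frac{v}{v+(m-y)^2}\le\epsilon$ rearranges to $\mu^{\text{T}}\rho-y\ge\sqrt{\frac{1-\epsilon}{\epsilon}}\,\|\Sigma^{1/2}\rho\|_2$, which is constraint $(\text{i})$ of \eqref{SOCP known known real full support} with $\kappa=\sqrt{(1-\epsilon)/\epsilon}$. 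For $\mathcal{D}_2$ the mean is unchanged but the variance is only bounded by $v\le\delta_0\|\Sigma^{1/2}\rho\|_2^2$; since $\frac{v}{v+(m-y)^2}$ is increasing in $v$, the worst case occurs at the largest admissible variance, which amounts to replacing $\Sigma$ by $\delta_0\Sigma$ and yields $\kappa=\sqrt{(1-\epsilon)\delta_0/\epsilon}$.

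For $\mathcal{D}_3$ both moments are uncertain, and I would decouple the two effects. To handle the mean I minimise the mean of $Z$ over the ellipsoid $\{\nu:(\nu-\mu)^{\text{T}}\Sigma^{-1}(\nu-\mu)\le\delta_1\}$: writing $\nu=\mu+\xi$ and applying the Cauchy--Schwarz inequality in the $\Sigma^{-1}$ inner product, $\rho^{\text{T}}\xi=(\Sigma^{1/2}\rho)^{\text{T}}(\Sigma^{-1/2}\xi)\ge-\sqrt{\delta_1}\,\|\Sigma^{1/2}\rho\|_2$, so the smallest attainable mean of $Z$ is $\mu^{\text{T}}\rho-\sqrt{\delta_1}\,\|\Sigma^{1/2}\rho\|_2$. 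Bounding the relevant scalar second moment by $\delta_2\|\Sigma^{1/2}\rho\|_2^2$ and combining the monotonicity of the Cantelli bound (decreasing in the mean, increasing in the variance) with the identity above gives $\mu^{\text{T}}\rho-\sqrt{\delta_1}\,\|\Sigma^{1/2}\rho\|_2-y\ge\sqrt{(1-\epsilon)\delta_2/\epsilon}\,\|\Sigma^{1/2}\rho\|_2$, i.e. $\kappa=\sqrt{\delta_1}+\sqrt{(1-\epsilon)\delta_2/\epsilon}$. Finally I would note that in each case the resulting inequality is a second-order cone constraint jointly in $(y,\rho)$ and that $(\text{ii})$ defines a polyhedron, so \eqref{DRCCMDP-opt-ex} reduces to an SOCP.

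The step I expect to be the main obstacle is $\mathcal{D}_3$. Because the matrix condition controls the second moment about the \emph{nominal} vector $\mu$ rather than about the true mean $\mathbb{E}_F(\hat{R})$, the extremal mean and the extremal variance are coupled through the decomposition $\text{Var}(Z)=\rho^{\text{T}}\mathbb{E}_F[(\hat{R}-\mu)(\hat{R}-\mu)^{\text{T}}]\rho-(\rho^{\text{T}}\xi)^2$, so the delicate point is to justify the decoupled treatment of the mean shift and the variance inflation and to exhibit a single lifted two-point distribution certifying that the stated bound is attained. The corresponding reductions for $\mathcal{D}_1$ and $\mathcal{D}_2$ are routine once the scalar Cantelli identity and the lifting construction are in hand.
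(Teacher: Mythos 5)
Your route --- project onto the scalar $Z=\rho^{\text{T}}\hat R$, lift scalar worst-case laws back into the moment set, and apply the tight one-sided Chebyshev (Cantelli) bound --- is exactly the route the paper itself points to (its proof is a one-line citation to lemmas ``based on the one-sided Chebyshev inequality''), and for $\mathcal{D}_1$ and $\mathcal{D}_2$ your argument is complete and correct. The lifting step is the part the paper leaves implicit and you handle it properly: the rank-one lift $\hat R=\mu+\tfrac{W}{\|\Sigma^{1/2}\rho\|_2^2}\,\Sigma\rho$ (equivalently your $Vu+W_\perp$ construction) shows that every admissible scalar law of $Z$ is realised by some $F$ in the multivariate set, so the scalar Cantelli identity gives genuine equivalence with $\kappa=\sqrt{(1-\epsilon)/\epsilon}$ and $\sqrt{(1-\epsilon)\delta_0/\epsilon}$.

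The gap is where you predicted it, in $\mathcal{D}_3$, and it cannot be closed by exhibiting a cleverer extremal distribution. Write $s=\rho^{\text{T}}\bigl(\mathbb{E}_F(\hat R)-\mu\bigr)$, $\sigma=\|\Sigma^{1/2}\rho\|_2$ and $d=\mu^{\text{T}}\rho-y$. Because the second-moment constraint is centred at $\mu$, every $F\in\mathcal{D}_3$ satisfies $\mathrm{Var}(Z)\le\delta_2\sigma^2-s^2$; in particular, at the extremal mean $s=-\sqrt{\delta_1}\,\sigma$ the variance is capped at $(\delta_2-\delta_1)\sigma^2$, so no single distribution realises both of your decoupled extremes. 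As written, your argument therefore establishes only that constraint $(\text{i})$ of \eqref{SOCP known known real full support} is \emph{sufficient} for the distributionally robust chance constraint. Carrying out the joint maximisation, the exact worst case is $\max_{|s|\le\sqrt{\delta_1}\sigma}\,(\delta_2\sigma^2-s^2)\big/\bigl[(\delta_2\sigma^2-s^2)+(d+s)^2\bigr]$, and requiring this to be at most $\epsilon$ reduces to a convex quadratic in $s$ being nonnegative on $[-\sqrt{\delta_1}\sigma,\sqrt{\delta_1}\sigma]$ --- a condition that is strictly weaker than $d\ge\kappa\sigma$ with $\kappa=\sqrt{\delta_1}+\sqrt{(1-\epsilon)\delta_2/\epsilon}$ in some regimes (for $\epsilon=0.1$, $\delta_1=0.5$, $\delta_2=1$ the exact threshold is $d=\sqrt{\delta_2/\epsilon}\,\sigma\approx 3.16\,\sigma$, whereas $\kappa\sigma\approx 3.71\,\sigma$). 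So the necessity direction cannot be completed along the lines you sketch; the paper does not attempt it either, delegating all three cases to external lemmas, and your write-up at least makes visible exactly which step those lemmas must supply.
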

\begin{proof}
\VS{The proof follows from the fact that for each uncertainty set the distributionally robust chance constraint $(\text{i})$ of \eqref{DRCCMDP-opt-ex}  is equivalent to a second-order cone constraint. The uncertainty set \eqref{definition D1} has been widely studied in the literature \cite{ElGhaouiJOTA, ghaoui2003worst}. For the uncertainty sets \eqref{Uncertainty set known unknown} and \eqref{Uncertainty set unknown unknown}, it can be proved using similar arguments used in Lemma 3.1 and 
Lemma 3.2 of \cite{HNConference2022} which are based on the one-sided Chebyshev inequality \cite{liu2022distributionally}}. 
\end{proof}

\subsection{DRCCMDP with moments based uncertainty sets under nonnegative support}
We consider the case where the support of \AL{the} random vector $\hat{R}$ is a nonnegative orthant of $|\mathcal{K}|$-dimensional Euclidean space, i.e., $\varphi=\mathbb{R}_+^{|\mathcal{K}|}$. We show that the DRCCMDP problem \eqref{DRCCMDP-opt} is equivalent to a copositive \AL{optimization} problem.
\begin{theorem}\label{theorem nonnegative moment}
Consider a DRCCMDP problem \eqref{DRCCMDP-opt} with $\varphi=\mathbb{R}_+^{|\mathcal{K}|}$. Then, the following results hold.
\begin{enumerate}
    \item If the distribution of $\hat{R}$ belongs to the uncertainty set defined by \eqref{definition D1},  the DRCCMDP problem \eqref{DRCCMDP-opt} is equivalent to the following copositive \AL{optimization} problem
    \begin{align}\label{matrix form unbounded nonegative polytopic}
    &\max\,\,\,\,\,y \nonumber\\
    \mbox{s.t.} \,\,\,\,\,
    & (\emph{i}) \quad -t-Q \circ \Sigma - q^{\emph{T}} \mu \leq \epsilon,\nonumber \\
    & (\emph{ii}) \quad \begin{pmatrix} -Q & \vline & -\frac{1}{2}q+Q\mu\\
    \hline
    -\frac{1}{2}q^{\emph{T}}+\mu^{\emph{T}}Q &\vline & -t - \mu^{\emph{T}} Q \mu \end{pmatrix} \in \emph{COP}^{|\mathcal{K}|  + 1},\nonumber\\
    & (\emph{iii}) \begin{pmatrix} -Q & \vline & -\frac{1}{2}q+Q\mu + \lambda \rho\\
    \hline
    -\frac{1}{2}q^{\emph{T}}+\mu^{\emph{T}}Q + \lambda \rho^{\emph{T}}&\vline & -t - \mu^{\emph{T}} Q \mu -1 - \lambda y \end{pmatrix} \in \emph{COP}^{|\mathcal{K}|  + 1},\nonumber\\
    & (\emph{iv}) \quad  Q\in \mathcal{S}^{|\mathcal{K}|}, \lambda \geq 0, \nonumber\\
    & (\emph{v}) \quad \rho\in\mathcal{Q}_{\alpha}(\gamma).
    \end{align}
    \item  If the distribution of $\hat{R}$ belongs to the uncertainty set defined by \eqref{Uncertainty set known unknown},  the DRCCMDP problem \eqref{DRCCMDP-opt} is equivalent to the following copositive \NGUNEW{optimization} problem
    \begin{align}\label{matrix form unbounded nonegative known unknown}
    &\max\,\,\,\,\,y \nonumber\\
    \mbox{s.t.} \,\,\,\,\,
    & (\emph{i}) \quad -t-\mu^{\emph{T}}q-\mu^{\emph{T}}Q\mu+\delta_0\Sigma \circ Q \leq \epsilon,\nonumber \\
    & (\emph{ii}) \quad \begin{pmatrix}Q&\vline&-\frac{1}{2}q-Q\mu\\\hline-\frac{1}{2}q^{\emph{T}}-\mu^{\emph{T}}Q&\vline&-t\end{pmatrix} \in \emph{COP}^{|\mathcal{K}|  + 1},\nonumber\\
    & (\emph{iii}) \quad \begin{pmatrix}Q&\vline&\frac{1}{2}(-q+\lambda \rho)-Q\mu\\\hline\frac{1}{2}(-q+ \lambda \rho)^{\emph{T}}-\mu^{\emph{T}}Q&\vline&-t-1-\lambda y\end{pmatrix} \in \emph{COP}^{|\mathcal{K}|  + 1},\nonumber\\
    & (\emph{iv}) \quad   Q \in \mathcal{S}_+^{|\mathcal{K}|}, \lambda \geq 0, \nonumber\\
    & (\emph{v}) \quad \rho\in\mathcal{Q}_{\alpha}(\gamma).
    \end{align}
    \item If the distribution of $\hat{R}$ belongs to the uncertainty set defined by \eqref{Uncertainty set unknown unknown},  the DRCCMDP problem \eqref{DRCCMDP-opt} is equivalent to the following copositive \NGUNEW{optimization} problem
    \begin{align}\label{matrix form unbounded nonegative unknown unknown}
    &\max\,\,\,\,\,y \nonumber\\
    \mbox{s.t.} \,\,\,\,\,
    & (\emph{i}) \quad r+t \leq \epsilon,\nonumber \\
    & (\emph{ii}) \quad \begin{pmatrix}Q&\vline&\frac{1}{2}q\\\hline\frac{1}{2}q^{\emph{T}}&\vline&r\end{pmatrix} \in \emph{COP}^{|\mathcal{K}|  + 1},\nonumber\\
    & (\emph{iii}) \quad t \geq (\delta_2 \Sigma + \mu \rho^{\emph{T}}) \circ Q + \rho^{\emph{T}} q + \sqrt{\delta_1} ||\Sigma^{\frac{1}{2}}(q+2Q\mu)||_2, \nonumber\\ 
    & (\emph{iv}) \quad \begin{pmatrix}Q&\vline&\frac{1}{2}(q+ \lambda \rho)\\\hline\frac{1}{2}(q+ \lambda \rho)^{\emph{T}}&\vline&r-1-\lambda y\end{pmatrix} \in \emph{COP}^{|\mathcal{K}|  + 1},\nonumber\\
    & (\emph{v}) \quad   Q \in \mathcal{S}_+^{|\mathcal{K}|}, \lambda \geq 0,\nonumber\\
    & (\emph{vi}) \quad \rho\in\mathcal{Q}_{\alpha}(\gamma),
    \end{align}
where \quad $\emph{COP}^{|\mathcal{K}| + 1} = \left\{M \in \mathcal{S}^{|\mathcal{K}|+1}\,\,|\,\, x^{\emph{T}} M x \geq 0, \ \forall\ x \in \mathbb{R}_+^{|\mathcal{K}|+1} \right\}$, 
$\mathcal{S}^{n}$ is the set of all real symmetric matrix of size $n \times n,$ 
$\mathcal{S}_+^{n}$ is the set of positive semidefinite \AL{matrices} of size $n \times n$,
$\circ$ denotes the Frobenius inner product and $\begin{pmatrix}&\vline&\\\hline&\vline&\end{pmatrix}$ denotes a block matrix (or a partitioned matrix).
\end{enumerate}
\end{theorem}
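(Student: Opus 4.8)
The plan is to treat constraint (i) of \eqref{DRCCMDP-opt}, namely the worst‑case probability $\sup_{F\in\mathcal D}\mathbb P_F(\rho^{\mathrm T}\hat R\le y)$, as a generalized moment problem over nonnegative Borel measures supported on $\mathbb R_+^{|\mathcal K|}$, and to dualize it. For fixed $(\rho,y)$ this worst‑case probability is the value of an infinite‑dimensional linear program whose objective is $\int \mathbf 1_{\{\rho^{\mathrm T}x\le y\}}\,dF(x)$ and whose constraints encode the normalization and the first‑ and second‑moment conditions defining $\mathcal D_1$, $\mathcal D_2$ or $\mathcal D_3$. First I would write down the conic (Lagrangian) dual of this moment LP: a scalar multiplier $t$ for the mass constraint, a vector multiplier $q$ for the mean constraint, and a symmetric‑matrix multiplier $Q$ for the second‑moment constraint. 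This yields a semi‑infinite program in $(t,q,Q)$ whose single hard constraint is the majorization requirement that the quadratic $x\mapsto x^{\mathrm T}Qx+q^{\mathrm T}x+t$ dominate the indicator $\mathbf 1_{\{\rho^{\mathrm T}x\le y\}}$ on the support $\mathbb R_+^{|\mathcal K|}$, while the dual objective is the moment‑weighted value of this majorant, which we require to be at most $\epsilon$ (this becomes constraint (i) in each reformulation, the displayed signs arising from the convention in which the multipliers are negated, as visible from the $-Q$ block in \eqref{matrix form unbounded nonegative polytopic}).

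The key structural step is to split this single majorization into the two constraints appearing in each reformulation. Since the indicator is nonnegative everywhere, the majorant must satisfy $x^{\mathrm T}Qx+q^{\mathrm T}x+t\ge0$ for all $x\in\mathbb R_+^{|\mathcal K|}$; homogenizing with an auxiliary nonnegative coordinate $\tau$ turns this into copositivity of the associated $(|\mathcal K|+1)\times(|\mathcal K|+1)$ bordered matrix, which is constraint (ii). On the sublevel set $\{x\ge0:\rho^{\mathrm T}x\le y\}$ the indicator equals one, so the majorant must satisfy $x^{\mathrm T}Qx+q^{\mathrm T}x+t\ge1$ there; I would relax the single linear inequality $y-\rho^{\mathrm T}x\ge0$ with a nonnegative multiplier $\lambda$ and again homogenize, producing the second bordered copositive matrix in which the off‑diagonal entry is shifted by $\lambda\rho$ and the corner entry by $-1-\lambda y$. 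Distributing $Q\circ(\Sigma+\mu\mu^{\mathrm T})$ and $q^{\mathrm T}\mu$ in the objective and recentering the quadratic at $\mu$ account for the $\mu$‑shifted entries $-\tfrac12 q+Q\mu$ and $-t-\mu^{\mathrm T}Q\mu$ displayed in the matrices.

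The three cases then differ only through the dual cone attached to each moment multiplier. For $\mathcal D_1$ both moment constraints are equalities, so $Q$ is a free symmetric matrix, giving \eqref{matrix form unbounded nonegative polytopic}. For $\mathcal D_2$ the covariance appears as the inequality $\mathbb E_F[(\hat R-\mu)(\hat R-\mu)^{\mathrm T}]\preceq\delta_0\Sigma$, whose dual forces $Q\succeq0$ and replaces $\Sigma\circ Q$ by $\delta_0\Sigma\circ Q$, giving \eqref{matrix form unbounded nonegative known unknown}. For $\mathcal D_3$ the mean is itself uncertain in the ellipsoid $(m-\mu)^{\mathrm T}\Sigma^{-1}(m-\mu)\le\delta_1$, so an additional inner supremum over the mean must be carried out; dualizing that ellipsoidal constraint by second‑order‑cone duality produces the Euclidean‑norm term $\sqrt{\delta_1}\,\|\Sigma^{1/2}(q+2Q\mu)\|_2$ in constraint (iii) of \eqref{matrix form unbounded nonegative unknown unknown}, while the inequality covariance again yields $Q\succeq0$ and the factor $\delta_2$.

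The step I expect to be the main obstacle is justifying strong duality together with exactness. Two points need care. First, that the moment LP and its conic dual have no duality gap and that the dual value is attained: this requires a Slater‑type interior condition, which holds here because $\mu\in\mathrm{RI}(\varphi)$ and $\Sigma\succ0$ guarantee a strictly feasible measure with the prescribed moments, so I would invoke the classical moment‑duality theory for generalized moment problems. Second, and genuinely delicate, that the single‑multiplier copositive relaxation of the quadratic inequality over $\{x\ge0,\ \rho^{\mathrm T}x\le y\}$ is tight rather than merely sufficient. The clean way to secure this is to set the problem up from the start as a completely‑positive/copositive primal–dual pair: the closure of the cone of second‑moment matrices of measures on $\mathbb R_+^{|\mathcal K|}$ is the completely positive cone, whose dual is exactly $\mathrm{COP}^{|\mathcal K|+1}$, so the copositive reformulation is exact and not a conservative inner approximation. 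Once strong duality and exactness are established, substituting back and appending $\rho\in\mathcal Q_\alpha(\gamma)$ together with the maximization over $y$ yields the three stated copositive optimization problems.
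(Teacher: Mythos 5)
Your overall architecture coincides with the paper's: dualize the worst-case probability as a generalized moment problem with multipliers $(t,q,Q)$, split the resulting semi-infinite majorization of the indicator into a constraint on all of $\mathbb{R}_+^{|\mathcal{K}|}$ and one on $\{x\ge 0,\ \rho^{\text{T}}x\le y\}$, homogenize to get copositivity, and adjust the cones for $\mathcal{D}_2$ and $\mathcal{D}_3$. However, the two steps you flag as delicate are exactly where the paper does its real work, and your proposed resolutions do not close them. First, for strong duality of the moment LP with support $\mathbb{R}_+^{|\mathcal{K}|}$ and \emph{equality} moment constraints, you assert that $\mu\in\text{RI}(\varphi)$ and $\Sigma\succ 0$ give a Slater point; the paper explicitly notes that exhibiting a strictly feasible measure in the relative interior of this distributional set is not trivial, and instead proves Lemma \ref{lemma strong duality nonnegative polytopic} by relaxing the mean constraint to $\mu\le \mathbb{E}_F(\hat R)\le \mu+x_j$, shifting a feasible distribution into the interior of the orthant, and passing to the limit $x_j\to 0$ on both the primal and dual values. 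Your one-line appeal to "classical moment-duality theory" leaves this gap open.

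Second, and more importantly, the tightness of the single-multiplier representation of
\begin{equation*}
x^{\text{T}}Qx+q^{\text{T}}x+t\ \ge\ 1\quad \forall\, x\in\mathbb{R}_+^{|\mathcal{K}|}\ \text{with}\ \rho^{\text{T}}x\le y
\end{equation*}
by "$\exists\,\lambda\ge 0$ such that the $\lambda$-shifted bordered matrix is copositive" is an S-lemma-type statement over the intersection of the orthant with a half-space. Your justification — that the dual of the cone of second-moment matrices of measures on $\mathbb{R}_+^{|\mathcal{K}|}$ is $\text{COP}^{|\mathcal{K}|+1}$ — only covers constraint (ii) (nonnegativity of the quadratic on the orthant, which the paper gets from Proposition 5.1 of \cite{hiriart2010variational}); it says nothing about exactness once the half-space cut $\rho^{\text{T}}x\le y$ is adjoined, which would require a Burer-type representability result or an explicit strong Lagrangian duality argument. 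The paper proves precisely this via a minimax identity $s_{\text{P}}=s_{\text{D}}$ for the Lagrangian $\mathcal{L}(\lambda,\xi,U,\rho,y)$, and since the orthant is not compact it cannot invoke Sion's theorem directly: it perturbs $U$ and $\rho$ to $U_i=U+2^{-i}\mathbf{I}$ and $\rho_i=\rho+2^{-i}\mathbf{1}$, shows the optimizers $\xi_{\text{P}},\xi_{\text{D}}$ are then bounded, applies Sion on the resulting compact set, takes limits, and finally handles the non-attainment of $\lambda$ through the decreasing sequence $\theta_j\downarrow 0$ of feasibility relaxations. None of this machinery is replaced by your completely-positive cone argument, so the central equivalence (that the reformulations are exact rather than conservative) remains unproved in your proposal.
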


In order to prove the first result of Theorem \ref{theorem nonnegative moment}, we need the following lemma.

\begin{lemma}\label{lemma strong duality nonnegative polytopic}
\VS{Consider an optimization problem \begin{align}\label{definition vP vP}
    \sup_{F\in \mathcal{D}_1(\varphi,\mu,\Sigma)}
    \mathbb{P}_{\AL{F}}(\rho^{\emph{T}} \hat{R} \leq y),
\end{align}
where $\varphi=\mathbb{R}_+^{|\mathcal{K}|}$.
If the feasible set of \eqref{definition vP vP} is non-empty, the dual of \eqref{definition vP vP} is given by}
\begin{align*} 
    &\inf\quad -t-Q \circ \Sigma - q^{\text{T}} \mu \,\,\, \nonumber\\
    \mbox{s.t.}\,\,\, & (\emph{i}) \quad  \mathbf{1}_{\left\{\rho^{\text{T}} \xi \leq y\right\}} + q^{\text{T}} \xi + \xi^{\text{T}} Q \xi - 2\xi^{\text{T}} Q \mu + \mu^{\text{T}} Q \mu + t \leq 0,\ \forall\ \xi \in \mathbb{R}_{+}^{|\mathcal{K}|},\nonumber\\
    & (\emph{ii}) \quad Q\in \mathcal{S}^{|\mathcal{K}|},
\end{align*}
such that strong duality holds.
\end{lemma}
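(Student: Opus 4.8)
The plan is to read \eqref{definition vP vP} as a generalized moment problem and to obtain its dual by conic (Lagrangian) duality for linear programs over measures. Concretely, with $\varphi=\mathbb{R}_+^{|\mathcal{K}|}$ the primal is the infinite-dimensional linear program
\[
\sup_{F\in\mathcal{M}^+}\ \int_{\mathbb{R}_+^{|\mathcal{K}|}}\mathbf{1}_{\{\rho^{\text{T}}\xi\le y\}}\,dF(\xi)
\]
subject to the three affine moment constraints $\int dF=1$, $\int\xi\,dF=\mu$ and $\int(\xi-\mu)(\xi-\mu)^{\text{T}}\,dF=\Sigma$, the decision variable being the measure $F$ supported on the nonnegative orthant. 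I would attach a scalar multiplier $t$ to the normalization constraint, a vector multiplier $q\in\mathbb{R}^{|\mathcal{K}|}$ to the mean constraint, and a symmetric-matrix multiplier $Q\in\mathcal{S}^{|\mathcal{K}|}$ to the covariance constraint, choosing the sign convention that produces the objective $-t-Q\circ\Sigma-q^{\text{T}}\mu$.

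Forming the Lagrangian and taking the supremum over $F\in\mathcal{M}^+$ shows that the dual is finite only if, for every admissible atom location $\xi\in\mathbb{R}_+^{|\mathcal{K}|}$, the coefficient multiplying the Dirac mass at $\xi$ is nonpositive; this yields exactly the semi-infinite constraint~(i). Using the identity $\xi^{\text{T}}Q\xi-2\xi^{\text{T}}Q\mu+\mu^{\text{T}}Q\mu=(\xi-\mu)^{\text{T}}Q(\xi-\mu)$, constraint~(i) says that the quadratic $-t-q^{\text{T}}\xi-(\xi-\mu)^{\text{T}}Q(\xi-\mu)$ dominates the indicator $\mathbf{1}_{\{\rho^{\text{T}}\xi\le y\}}$ on $\mathbb{R}_+^{|\mathcal{K}|}$, while the constant part of the Lagrangian assembles into the dual objective. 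Weak duality is then immediate: integrating the pointwise majorization~(i) against any primal-feasible $F$ collapses the right-hand side to $-t-Q\circ\Sigma-q^{\text{T}}\mu$ by the moment constraints, whereas the left-hand side integrates to $\mathbb{P}_F(\rho^{\text{T}}\hat{R}\le y)$, so the dual value always upper bounds the primal value.

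The substantive part is to certify that there is no duality gap and that the dual infimum is attained, which weak duality alone does not give for an infinite-dimensional conic program. I would invoke the duality theorem for moment problems (an Isii / conic-duality type result), whose hypothesis is a constraint qualification: the prescribed moment data $(1,\mu,\Sigma)$ must lie in the interior of the moment cone generated by measures on $\mathbb{R}_+^{|\mathcal{K}|}$. This is where the standing assumptions of the section enter — $\Sigma\succ0$ together with $\mu\in\mathrm{RI}(\varphi)=\mathbb{R}_{++}^{|\mathcal{K}|}$, and the assumed non-emptiness of the feasible set of \eqref{definition vP vP}, furnish a strictly feasible measure and place $(1,\mu,\Sigma)$ in the interior of that cone. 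Verifying this interior (Slater-type) condition is the main obstacle; once it is in place, the cited theorem delivers zero gap and dual attainment, which is precisely the assertion that the displayed program is the dual of \eqref{definition vP vP} with strong duality.
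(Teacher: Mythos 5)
Your setup of the primal as a moment problem over measures on $\mathbb{R}_+^{|\mathcal{K}|}$, the choice of multipliers $(t,q,Q)$, and the weak-duality computation all agree with the paper. The gap is in the one step you yourself call ``the main obstacle'': you assert that $\Sigma\succ 0$, $\mu\in\text{RI}(\varphi)$ and non-emptiness of the feasible set place the moment data $(1,\mu,\Sigma)$ in the interior of the moment cone generated by measures on the nonnegative orthant, but you give no argument, and the claim fails in general. Because the covariance constraint in $\mathcal{D}_1$ is an \emph{equality}, the Dirac measure $\delta_\mu$ (the strictly feasible point that Cheng et al.\ use for the $\preceq$ version of the constraint) is not feasible here, and there are admissible data for which the moment vector genuinely lies on the boundary of the moment cone: with $|\mathcal{K}|=2$, take any feasible $F$ supported on the two coordinate axes with nonsingular covariance and $\mu>0$; then $\mathbb{E}_F[\xi_1\xi_2]=0$ while $p(\xi)=\xi_1\xi_2$ is a nonzero quadratic that is nonnegative on $\mathbb{R}_+^{2}$, so the moment vector is annihilated by a nonzero element of the dual cone and cannot be interior. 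The Isii/Slater route therefore cannot close the argument as you have stated it; the pairing $\mathbb{E}_F[p]$ depends only on the moments, so no choice of representing measure rescues the interiority claim.

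The paper circumvents exactly this difficulty with a perturbation argument: it relaxes the mean equality to the two-sided constraint $\mu\le\int \hat{R}\,\text{d}F\le\mu+x_j$ with $x_j\downarrow 0$, constructs a strictly feasible measure for each perturbed problem by translating a feasible $F^*$ into the interior of the orthant (so the support avoids the boundary and the inequality constraints hold strictly), applies conic strong duality to each perturbed primal--dual pair, and then shows that both the perturbed primal and the perturbed dual values converge to the unperturbed ones, using a near-optimal dual sequence and the splitting $q^*_j=\max(0,q^*_j)-r^*_j$ to control the extra term $r_j^{*\text{T}}x_j$. To complete your proof you would need either an argument of this kind or a verifiable constraint qualification for the equality-constrained moment problem; the one you invoke is not available here.
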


The proof is given in  Appendix \ref{Appendix Lemma non negative polytopic}.

\begin{proof}[Proof of Theorem \ref{theorem nonnegative moment}]
\begin{enumerate}
\item Let the distribution of $\hat{R}$ belongs to \AL{the} uncertainty set $\mathcal{D}_1(\phi,\mu,\Sigma)$. \VS{Using Lemma \ref{lemma strong duality nonnegative polytopic}, the optimization problem \eqref{DRCCMDP-opt} is equivalent to the following problem}
\begin{align}\label{SUB reformulated SDP unbounded known known nonnegative}
&\sup\quad y \nonumber\\
\mbox{s.t.} \,\, & (\text{i}) \quad -t-Q \circ \Sigma - q^{\text{T}} \mu \leq \epsilon,\nonumber\\
& (\text{ii}) \quad q^{\text{T}} \xi + \xi^{\text{T}} Q \xi - 2\xi^{\text{T}} Q \mu + \mu^{\text{T}} Q \mu + t \leq 0,\ \forall\ \xi \in \mathbb{R}_{+}^{|\mathcal{K}|},\nonumber\\
& (\text{iii}) \quad 1 + q^{\text{T}} \xi + \xi^{\text{T}} Q \xi - 2\xi^{\text{T}} Q \mu + \mu^{\text{T}} Q \mu + t \leq 0,\ \forall\ \xi \in \mathbb{R}_{+}^{|\mathcal{K}|},\, \rho^{\text{T}} \xi \leq y,\nonumber\\
& (\text{iv}) \quad Q\in \mathcal{S}^{|\mathcal{K}|}, \rho\in\mathcal{Q}_{\alpha}(\gamma).\nonumber\\
&
\end{align}
The constraint $(\text{ii})$ of \eqref{SUB reformulated SDP unbounded known known nonnegative} is equivalent to:
\begin{align*}
    (\xi^{\text{T}},1) U (\xi^{\text{T}},1)^{\text{T}} \geq 0, \ \forall\ \xi \in \mathbb{R}_{+}^{|\mathcal{K}|},
\end{align*}
where $U \in \mathcal{S}^{|\mathcal{K}|+1}$ such that \begin{align*}
    U = \begin{pmatrix} -Q & \vline & -\frac{1}{2}q+Q\mu\\
    \hline
    -\frac{1}{2}q^{\text{T}}+\mu^{\text{T}}Q &\vline & -t - \mu^{\text{T}} Q \mu \end{pmatrix}.
\end{align*}
Here, $(\xi^{\text{T}},1)$ denotes the row vector of size $1 \times (|\mathcal{K}|+1)$ with the last component equals 1 and the first $|\mathcal{K}|$ components are the components of $\xi$. The above inequality can be rewritten as 
\begin{align*}
    x^{\text{T}}Ux \geq 0, \ \forall\ x \in \mathbb{R}_{+}^{|\mathcal{K}| + 1},||x||_2=1.
\end{align*}
Using Proposition 5.1 in \cite{hiriart2010variational}, we deduce that the constraint $(\text{ii})$ of \eqref{SUB reformulated SDP unbounded known known nonnegative} is equivalent to $U \in \text{COP}^{|\mathcal{K}|+1}$. The constraint $(\text{iii})$ of \eqref{SUB reformulated SDP unbounded known known nonnegative} is equivalent to:
\begin{align}\label{constraint iii}
    -1 + (\xi^{\text{T}},1) U (\xi^{\text{T}},1)^{\text{T}} \geq 0, \ \forall \ \xi \in \mathbb{R}_{+}^{|\mathcal{K}|},\ \rho^{\text{T}} \xi \leq y.
\end{align}
Define,
\begin{equation}\label{minmax problem known known}
    \left\{
\begin{aligned}
    &s_{\text{P}} = \min_{\xi \in \mathbb{R}_{+}^{|\mathcal{K}|}} \max_{\lambda \geq 0}\,\,\,\,\,\,\, \mathcal{L}(\lambda,\xi, U, \rho,y).\\
    &s_{\text{D}} = \max_{\lambda \geq 0}\min_{\xi \in \mathbb{R}_{+}^{|\mathcal{K}|}}\,\,\,\,\,\,\, \mathcal{L}(\lambda,\xi, U, \rho,y).
    \end{aligned}
    \right.
\end{equation}
\NGU{where $\mathcal{L}(\lambda,\xi, U, \rho, y) = -1 + (\xi^{\text{T}},1) U (\xi^{\text{T}},1)^{\text{T}} + \lambda(\rho^{\text{T}} \xi - y)$.}
In \cite{cheng2014distributionally}, the authors use the Sion's minimax theorem \cite{sion1958general} to interchange the minimum and the maximum. However, since $\varphi$ is not compact, we cannot apply  the Sion's minimax theorem directly in this case. \VS{We show that $\varphi$ can be restricted to a compact set without loss of optimality.} \VS{For a given $U$ and $\rho$, we have}
\begin{align}\label{s_p finite}
    &s_{\text{P}} \leq \max_{\lambda \geq 0} \mathcal{L}(\lambda,0, U, \rho, y) \nonumber \\
    &= \max_{\lambda \geq 0}( -t-\mu^{\text{T}} Q \mu-\lambda y - 1) = -t - \mu^{\text{T}} Q \mu- 1< \infty
\end{align}
Therefore, using the min-max inequality $s_\text{D} \leq s_\text{P} < \infty$. Let $U_i = U + \frac{1}{2^i} \mathbf{I}_{|\mathcal{K}|+1}$ and $\rho_i = \rho + \frac{1}{2^i} \NGUNEW{\mathbf{1}}$, for every $i \in \mathbb{N}$, where $\mathbf{I}_{|\mathcal{K}|+1}$ denotes the identity matrix of size $|\mathcal{K}|+1$, \NGUNEW{$\mathbf{1}$} denotes the vector with all components equal to $1$. It is clear from the construction that $\rho_i > 0$ componentwise.
{Since, $\mathcal{L}$ is a continuous function w.r.t $U$ and $\rho$,  we have}
\begin{align*}
    \mathcal{L}(\lambda,\xi,U_i,\rho_i,y) \xrightarrow{i \rightarrow \infty} \mathcal{L}(\lambda,\xi,U,\rho,y), \ \forall\ \xi \in \mathbb{R}_+^{|\mathcal{K}|},\lambda \geq 0.
\end{align*}
Since, the min and max operators preserve the continuity, we have
\begin{align*}
    & \min_{\xi \in \mathbb{R}_+^{|\mathcal{K}|}} \max_{\lambda \geq 0}\,\,\,\,\,\,\, \mathcal{L}(\lambda,\xi,U_i,\rho_i,y) \xrightarrow{i \rightarrow \infty}\min_{\xi \in \mathbb{R}_+^{|\mathcal{K}|}} \max_{\lambda \geq 0}\,\,\,\,\,\,\,\mathcal{L}(\lambda,\xi,U,\rho,y).\\
    &\max_{\lambda \geq 0}\min_{\xi \in \mathbb{R}_+^{|\mathcal{K}|}}\,\,\,\,\,\,\, \mathcal{L}(\lambda,\xi,U_i,\rho_i,y) \xrightarrow{i \rightarrow \infty}\max_{\lambda \geq 0}\min_{\xi \in \mathbb{R}_+^{|\mathcal{K}|}}\,\,\,\,\,\,\,\mathcal{L}(\lambda,\xi,U,\rho,y).
\end{align*}
\NGU{This implies that, if $s_{\text{P}}=s_{\text{D}}$ holds for any $U_i,\rho_i$, $i \in \mathbb{N}$, it also holds for $U,\rho$.} \VS{For an arbitrary $U_i$ and $\rho_i$}, \NGU{let the the optimal solutions of minimax and maximin problems defined by \eqref{minmax problem known known} are $(\xi_\text{P},\lambda_\text{P})$ and $(\xi_\text{D},\lambda_\text{D})$, respectively. We prove that $\xi_\text{P}$ and $\xi_\text{D}$ are bounded, i.e., there exists $\Upsilon_\text{P} >0$ and $\Upsilon_\text{D} >0$ depending on $U_i,\rho_i$ and $y$ such that $||\xi_\text{P}||_2 \leq \Upsilon_\text{P}$ and $||\xi_\text{D}||_2 \leq \Upsilon_\text{D}$.}
\NGU{It is clear that $\lambda_\text{P} = 0$ and $\rho_i^{\text{T}} \xi_\text{P} - y \leq 0$. Hence, we have}
\begin{align*}
    s_\text{P} &= -1 + (\xi_\text{P}^{\text{T}},1) U_i (\xi_\text{P}^{\text{T}},1)^{\text{T}},\\
    &=-1 + (\xi_\text{P}^{\text{T}},1) U (\xi_\text{P}^{\text{T}},1)^{\text{T}} + \frac{1}{2^i}||\xi_\text{P}||_2^2 + \frac{1}{2^i}.
\end{align*}
\VS{From constraint $(\text{ii})$ of \eqref{SUB reformulated SDP unbounded known known nonnegative}, it follows that $(\xi_\text{P}^{\text{T}},1) U (\xi_\text{P}^{\text{T}},1)^{\text{T}} \geq 0$. Therefore, if $||\xi_\text{P}||_2 \rightarrow \infty$, $s_\text{P} \rightarrow \infty$.  
Therefore, $||\xi_\text{P}||_2$ is bounded by some real number $\Upsilon_P>0$
which depends on $U_i, \rho_i$ and $y$.}
As $\xi \in \mathbb{R}_+^{|\mathcal{K}|}$ and $\rho_i>0$, componentwise, we have
\begin{align*}
    \liminf_{||\xi||_2 \rightarrow \infty}\lambda(\xi)(\rho_i^{\text{T}} \xi - y) \geq 0,
\end{align*}
for any $\lambda(\xi)\ge 0$. Then, 
\begin{align*}
    s_\text{D} &= -1 + (\xi_\text{D}^{\text{T}},1) U_i (\xi_\text{D}^{\text{T}},1)^{\text{T}} + \lambda_\text{D}(\rho_i^{\text{T}} \xi_\text{D} - y),\\
    &=-1 + (\xi_\text{D}^{\text{T}},1) U (\xi_\text{D}^{\text{T}},1)^{\text{T}} + \frac{1}{2^i}||\xi_\text{D}||_2^2 + \frac{1}{2^i} + \lambda_\text{D}(\rho_i^{\text{T}} \xi_\text{D} - y).
\end{align*}
It is clear that $\frac{1}{2^i}||\xi_\text{D}||_2^2 \rightarrow \infty$ and \AL{the} \VS{other terms are lower bounded by some nonnegative number}. \NGU{Therefore, $s_\text{D} \rightarrow \infty$ when $||\xi_\text{D}||_2 \rightarrow \infty$.
\AL{Hence}, $||\xi_\text{D}||_2$ is bounded by some real number $\Upsilon_D >0$ which depends on $U_i, \rho_i$ and $y$.}
Let $\Upsilon = \max(\Upsilon_\text{P},\Upsilon_\text{D})$. Then, \eqref{minmax problem known known} is equivalent to
\begin{align*}
    &s_\text{P} = \min_{\xi \in \mathbb{R}_+^{|\mathcal{K}|},||\xi||_2 \leq \Upsilon} \max_{\lambda \geq 0}\,\,\,\,\,\,\, \mathcal{L}(\lambda,\xi, U^i, \rho^i,y).\nonumber\\
    &s_\text{D} = \max_{\lambda \geq 0}\min_{\xi \in \mathbb{R}_+^{|\mathcal{K}|}, ||\xi||_2 \leq \Upsilon}\,\,\,\,\,\,\, \mathcal{L}(\lambda,\xi, U^i, \rho^i,y).
\end{align*}
Note that the set $\left\{\xi\,\,|\,\,\xi \in \mathbb{R}_+^{|\mathcal{K}|}, ||\xi||_2 \leq \Upsilon\right\}$ is compact. Therefore, from Sion's \NGUNEW{minimax} theorem $s_\text{P} = s_\text{D}$ for every 
$U_i$, $\rho_i$, $i\in \mathbb{N}$. 
\VS{
 For any $\xi$ such that $\rho^{\text{T}} \xi > y$, it is easy to see that
\begin{align*}
    \max_{\lambda \geq 0}\,\,\,\,\,\,\, \mathcal{L}(\lambda,\xi, U, \rho,y)=\infty
\end{align*}
The condition $s_{\text{P}} < \infty$ gives $\rho^{\text{T}} \xi \leq y$ and $\lambda=0$ which in turn implies that
\begin{align*}
    s_{\text{P}} = \min_{\rho^{\text{T}} \xi \leq y} \,\,\,\,\,\,\, \mathcal{L}(0,\xi, U, \rho,y)\ge 0.
\end{align*}
Therefore, 
 \eqref{constraint iii} is equivalent to $s_{\text{D}} \geq 0$. Then, there exists a sequence of nonnegative numbers $\lambda_j \geq 0$ and a decreasing sequence of positive numbers $\theta_j > 0$, such that $\theta_j \rightarrow 0$ as $j\rightarrow \infty$, for which the following condition holds 
\begin{equation}\label{alternative constraint of iii}
\left\{
\begin{aligned}
    &-1 + (\xi^{\text{T}},1) U (\xi^{\text{T}},1)^{\text{T}} + \lambda_j(\rho^{\text{T}} \xi - y) \geq -\theta_j, \ \forall \ \xi \in \mathbb{R}_{+}^{|\mathcal{K}|}, \ j \in \mathbb{N},\\
&    \lambda_j \geq 0, \ \forall \ j \in \mathbb{N}.
\end{aligned}
\right.
\end{equation}
For each $j\in \mathbb{N}$, define
\[
Fea(\theta_j)=\{(U,\rho,y,\lambda) \mid -1 + (\xi^{\text{T}},1) U (\xi^{\text{T}},1)^{\text{T}} + \lambda(\rho^{\text{T}} \xi - y) \ge -\theta_j, \ \lambda \ge 0\}.
\]
The feasible region defined by \eqref{alternative constraint of iii} is equivalent to $\bigcap\limits_{j\in \mathbb{N}} Fea(\theta_j)$.  For any $i<j$, $Fea(\theta_j) \subset Fea(\theta_i)$. Therefore, $Fea(\theta_j) \downarrow \bigcap\limits_{i\in \mathbb{N}} Fea(\theta_i)$ as $j\rightarrow \infty$. The feasible set $Fea(\theta_j)$ as $j\rightarrow \infty$ is given by 
\begin{equation}\label{in-bet-const}
    \left\{
\begin{aligned}
   & (\xi^{\text{T}},1) Z (\xi^{\text{T}},1)^{\text{T}} \geq 0, \ \forall \ \xi \in \mathbb{R}_{+}^{|\mathcal{K}|},\\
   & \lambda \ge 0,
\end{aligned}
\right.
\end{equation}
where $Z \in \mathcal{S}^{|\mathcal{K}|+1}$ and
\begin{align*}
    Z = \begin{pmatrix} -Q & \vline & -\frac{1}{2}q+Q\mu + \lambda \rho\\
    \hline
    -\frac{1}{2}q^{\text{T}}+\mu^{\text{T}}Q + \lambda \rho^{\text{T}}&\vline & -t - \mu^{\text{T}} Q \mu -1  - \lambda y \end{pmatrix}.
\end{align*}
Using similar arguments as above, the constraint \eqref{in-bet-const} is equivalent to 
\begin{gather}\label{final-iii}
Z \in \text{COP}^{|\mathcal{K}|+1}, \ \lambda\ge 0.
\end{gather}
This implies that  the constraint $(\text{iii})$ of \eqref{SUB reformulated SDP unbounded known known nonnegative} is equivalent to \eqref{final-iii}.}
\VS{Hence, DRCCMDP problem \eqref{DRCCMDP-opt} is equivalent to \eqref{matrix form unbounded nonegative polytopic}.}
\item \VS{Let  the distribution of $\hat{R}$ belongs to the uncertainty set $ \mathcal{D}_2(\varphi, \mu, \Sigma,\delta_0)$.
 From Theorem 3.4 \cite{cheng2014distributionally}, the dual of the optimization  problem $\sup_{F \in \mathcal{D} }\mathbb{P}_{\AL{F}}\left(\rho^{\text{T}} \hat{R} \leq y \right)$ can be written as
\begin{align*}
    & \inf\,\,\, (-t-\mu^{\text{T}}q-\mu^{\text{T}}Q\mu+\delta_0 \Sigma \circ Q) \\
    \mbox{s.t.}\,\,\, & (\text{i}) \quad \mathbf{1}_{\left\{\rho^{\text{T}} \xi \leq y\right\}} + t + q^{\text{T}} \xi - \xi^{\text{T}} Q \xi + 2 \mu^{\text{T}}Q\xi \leq 0,\ \forall\ \xi \in \mathbb{R}_+^{|\mathcal{K}|}, \nonumber \\
    & (\text{ii}) \quad Q \in \mathcal{S}_+^{|\mathcal{K}| }, \nonumber
\end{align*}
and the strong duality holds. The rest of the proof follows from the similar arguments used for the case of \AL{the} uncertainty set $\mathcal{D}_1(\varphi,\mu,\Sigma)$.}

\item \VS{If the distribution of $\hat{R}$ belongs to the uncertainty set $\mathcal{D}_3(\varphi,\mu,\Sigma,\delta_1,\delta_2)$,
using  Lemma 1 of \cite{delage2010distributionally} the dual of  the problem $\sup_{F \in \mathcal{D} }\mathbb{P}_{\AL{F}}\left(\rho^{\text{T}} \hat{R} \leq y \right)$ is given by}
\begin{align*}
    & \inf\,\,\, (r+t) \\
    \mbox{s.t.}\,\,\, & (\text{i}) \quad r \geq \mathbf{1}_{\left\{\rho^{\text{T}} \xi \leq y\right\}} - \xi^{\text{T}}Q\xi - \xi^{\text{T}}q,\ \forall\ \xi \in \mathbb{R}_+^{|\mathcal{K}|} , \nonumber \\
    & (\text{ii}) \quad t \geq (\delta_2 \Sigma + \mu \rho^{\text{T}}) \circ Q + \rho^{\text{T}} q + \sqrt{\delta_1} ||\Sigma^{\frac{1}{2}}(q+2Q\mu)||_2, \nonumber\\
    & (\text{iii}) \quad Q \in \mathcal{S}_+^{|\mathcal{K}|}, \nonumber
\end{align*}
\VS{and strong duality holds. Again,  the rest of the proof follows using similar arguments used in the case of $\mathcal{D}_1(\varphi,\mu,\Sigma)$.}
\end{enumerate}
\end{proof}
\begin{remark}
Copositive \AL{optimization} has been studied in the literature. In practical application\AL{s}, the copositive constraints can be approximated conservatively by SDP (semidefinite programming) constraints. We refer to \cite{bomze2017fresh,bomze2019notoriously,xu2018copositive} for some recent researches about SDP approximations.
\end{remark}
\VS{
\section{Statistical distance based uncertainty sets} \label{stat-dist-un}
In this section, we consider uncertainty sets defined using statistical distance metric known as $\phi$-divergence and Wasserstein distance. For each uncertainty set, we propose equivalent reformulation of DRCCMDP problem \eqref{DRCCMDP-opt-ex} (or \eqref{DRCCMDP-opt}).
}
\subsection{\textbf{Uncertainty set with} \texorpdfstring{$\phi$}{Lg} -\textbf{divergence} distance}\label{section phi divergences}
\VS{We consider an uncertainty set defined using 
 statistical distance metric called $\phi$-divergence. 
In such uncertainty set, a nominal distribution is known to the decision maker based on \AL{the} available estimated data. The decision maker \AL{believes} that the true distribution of $\hat{R}$ belongs to a ball of radius $\theta_\phi$ and center\AL{ed} at a nominal distribution $\nu$ and the distance between the true distribution and $\nu$ is given by a $\phi$-divergence. We show that the DRCCMDP problem \eqref{DRCCMDP-opt-ex} is equivalent to an SOCP problem for various $\phi$-divergences.}  
\begin{definition}
The $\phi-$divergence distance between two probability measures $\nu_1$ and $\nu_2$ with densities $f_{\nu_1}$ and $f_{\nu_2}$, respectively, \VS{and full support  $\mathbb{R}^{|\mathcal{K}|}$ is given by}
\begin{align*}
    I_{\phi}(\nu_1,\nu_2) = \int_{\mathbb{R}^{|\mathcal{K}|}}\phi\left(\frac{f_{\nu_1}(\xi)}{f_{\nu_2}(\xi)}\right)f_{\nu_2}(\xi)d \xi.
\end{align*}
\end{definition}
For different choices of $\phi$, we refer to \cite{ben2013robust} and \cite{pardo2018statistical}. Let $\nu \in \NGUNEW{\mathcal{M}^+}$ be a nominal distribution with \AL{a} density function $f_{\nu}$.
 \VS{The uncertainty set  of the distribution of $\hat{R}$ based on $\phi$-divergence is defined by }
\begin{equation}\label{uncertainty phi divergences set}
\mathcal{D}_4(\nu,\theta_\phi) = \left\{F \in \mathcal{M}^+ \mid
I_\phi(F,\nu) \leq \theta_\phi
\right\},
\end{equation}
where $\theta_\phi > 0$.
\begin{definition}
The conjugate of $\phi$ is a function $\phi^* : \mathbb{R} \rightarrow \mathbb{R}\cup\infty$ such that
\begin{align*}
    \phi^*(r) = \sup_{t \geq 0}\left\{rt-\phi(t)\right\}, \ \forall\ r \in \mathbb{R}.
\end{align*}
\end{definition}

\begin{lemma}\label{lemma for general phi-divergences}
Consider an optimization problem \begin{align}\label{definition vP phi divergence}
    \inf_{F\in \mathcal{D}_4(\nu,\theta_\phi)}
    \mathbb{P}_{\AL{F}}(\rho^{\emph{T}} \hat{R} \geq y).
\end{align}
Then, the dual problem of \eqref{definition vP phi divergence} is given by
\begin{align*}
    &\sup_{\lambda > 0, \beta \in \mathbb{R}}\left\{\beta-\lambda \theta_\phi - \lambda \phi^*\left(\frac{-1+\beta}{\lambda}\right)\mathbb{P}_{\nu}(O)- \lambda \phi^*\left(\frac{\beta}{\lambda}\right)(1-\mathbb{P}_{\nu}(O))\right\},\nonumber
\end{align*}
where $O = \left\{\xi \in \mathbb{R}^{|\mathcal{K}|}\,\,\, |\,\,\, \rho^{\emph{T}} \xi \geq y \right\}$, such that the strong duality holds.

\end{lemma}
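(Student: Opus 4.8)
The plan is to recognize \eqref{definition vP phi divergence} as an infinite-dimensional convex moment problem and to dualize it by Lagrangian relaxation of the two constraints defining $\mathcal{D}_4(\nu,\theta_\phi)$. First I would note that any $F$ with $I_\phi(F,\nu)<\infty$ must be absolutely continuous with respect to $\nu$, so the feasible distributions can be parametrized by their Radon--Nikodym density $L=\frac{dF}{d\nu}\geq 0$. Writing $\mathbb{P}_F(\rho^{\text{T}}\hat{R}\geq y)=\int \mathbf{1}_O(\xi)\,L(\xi)\,d\nu(\xi)$ with $O$ as defined, the primal becomes
\[
\inf_{L\geq 0}\ \int \mathbf{1}_O\,L\,d\nu \quad\text{s.t.}\quad \int L\,d\nu=1,\ \ \int \phi(L)\,d\nu\leq \theta_\phi .
\]

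Next I would attach a free multiplier $\beta\in\mathbb{R}$ to the normalization equality $\int L\,d\nu=1$ and a nonnegative multiplier $\lambda\geq 0$ to the divergence inequality $\int\phi(L)\,d\nu\leq\theta_\phi$, giving the Lagrangian
\[
\mathcal{L}(L,\beta,\lambda)=\beta-\lambda\theta_\phi+\int\big[\mathbf{1}_O\,L+\lambda\,\phi(L)-\beta\,L\big]\,d\nu .
\]
The dual function is $\inf_{L\geq 0}\mathcal{L}$, and the key structural step is to push the infimum inside the integral, reducing the minimization over the function $L$ to a pointwise minimization over the scalar $t=L(\xi)\geq 0$. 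This interchange is justified by the theory of convex integral functionals (Rockafellar's interchange theorem), using that $\phi$ is proper, convex and lower semicontinuous.

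With the interchange in hand the remaining computation is routine: on $O$ the pointwise objective is $(1-\beta)t+\lambda\phi(t)$ and on the complement it is $-\beta t+\lambda\phi(t)$. Factoring out $\lambda>0$ and invoking $\phi^*(r)=\sup_{t\geq 0}\{rt-\phi(t)\}$ yields $\inf_{t\geq 0}[(1-\beta)t+\lambda\phi(t)]=-\lambda\phi^*\!\big(\tfrac{-1+\beta}{\lambda}\big)$ and $\inf_{t\geq 0}[-\beta t+\lambda\phi(t)]=-\lambda\phi^*\!\big(\tfrac{\beta}{\lambda}\big)$. Weighting these two values by $\mathbb{P}_\nu(O)$ and $1-\mathbb{P}_\nu(O)$ respectively reproduces exactly the stated dual objective, and the restriction to $\lambda>0$ is precisely what makes the factoring valid (the degenerate case $\lambda=0$ is dominated in the supremum).

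The hard part is strong duality (zero gap), which I would obtain through a Slater-type constraint qualification. Since $\phi(1)=0$, the nominal density $L\equiv 1$, i.e. $F=\nu$, satisfies $\int\phi(L)\,d\nu=0<\theta_\phi$ strictly, so $\nu$ is a relative-interior feasible point of the divergence constraint; combined with nonemptiness of $\mathcal{D}_4(\nu,\theta_\phi)$ this produces a zero duality gap via the standard convex-duality theorem for problems admitting an interior point, in the same spirit as the $\phi$-divergence duality results of \cite{ben2013robust}. Finiteness is not an issue here because $\mathbb{P}_F(\cdot)\in[0,1]$ bounds the primal value, which in turn controls the dual. I expect this constraint-qualification and interchange-of-inf-and-integral step to be the main obstacle, whereas the conjugate-function bookkeeping is mechanical.
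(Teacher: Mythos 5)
Your proposal is correct and follows essentially the same route as the paper: both dualize the normalization and divergence constraints with multipliers $\beta$ and $\lambda$, reduce the inner infimum over densities to a pointwise conjugate computation, and invoke Slater's condition (valid since $\theta_\phi>0$ and $F=\nu$ is strictly feasible) to get strong duality. The only difference is cosmetic --- the paper writes the primal in terms of Lebesgue densities and delegates the pointwise-conjugate step to Theorem 1 of \cite{jiang2016data}, whereas you parametrize by the Radon--Nikodym derivative and carry that step out explicitly.
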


\begin{proof}
We rewrite the primal problem \eqref{definition vP phi divergence} as a \VS{following semi-infinite programming problem}
\begin{align}\label{formulation of vP}
    v_\text{P} = &\inf_{F \geq 0}\int_{\mathbb{R}^{|\mathcal{K}|}}\mathbf{1}_{O}(\xi)F(\xi)\text{d} \xi \nonumber \\
    \mbox{s.t.}\,\,\,\,& (\text{i}) \quad \int_{\mathbb{R}^{|\mathcal{K}|}}f_{\nu}(\xi)\phi\left(\frac{F(\xi)}{f_{\nu}(\xi)}\right)\text{d} \xi \leq \theta_\phi, \nonumber\\
    & (\text{ii}) \quad \int_{\mathbb{R}^{|\mathcal{K}|}}F(\xi)\text{d} \xi = 1.
\end{align}
The dual problem of \eqref{formulation of vP} is given by
\begin{align*}
    &v_\text{D} = \\
    &\sup_{\lambda \geq 0, \beta \in \mathbb{R}} \left\{\beta-\lambda \theta_\phi + \inf_{F(\xi)\geq 0}\left\{\int_{\mathbb{R}^{|\mathcal{K}|}}\left(\mathbf{1}_{O}(\xi)F(\xi) - \beta F(\xi) + \lambda f_{\nu}(\xi)\phi\left(\frac{F(\xi)}{f_{\nu}(\xi)}\right)\right)\text{d} \xi\right\}\right\},
\end{align*}
where $\lambda$ is the dual variable of the constraint $(\text{i})$ of \eqref{formulation of vP} and $\beta$ is the dual variable of the constraint $(\text{ii})$ of \eqref{formulation of vP}. Since $\theta_{\phi} > 0$, the Slater's condition holds \VS{which implies that }the strong duality holds, i.e., $v_\text{P}=v_\text{D}$. \VS{The rest of the proof follows from  Theorem 1 of \cite{jiang2016data}.} 
\end{proof}

\NGU{We study 4 cases of $\phi-$divergences whose conjugates are given in Table \ref{table of phi divergences}}.
\begin{table}[ht]\small 
\centering
\caption{List of selected $\phi-$divergences with their conjugate} \vspace{.2cm}
\begin{tabular}{|*{5}{c|}}
\hline
Divergence & $\phi(t), t \geq 0$ & $\phi^*(r)$\\
\hline
Kullback-Leibler & $t \log(t)-t+1$. & $\text{e}^r-1$\\   
\hline
Variation distance & $|t-1|$. &$\makecell{-1, \quad \quad r \leq -1,\\r, \quad \quad -1 \leq r \leq 1,\\\infty, \quad \quad r>1.}$ \\
\hline
Modified $\chi^2$ - distance & $(t-1)^2$. & $\makecell{-1,\quad \quad r\leq -2,\\r+\frac{r^2}{4},\quad \quad r>-2.}$\\
\hline
Hellinger distance & $(\sqrt{t}-1)^2$. & $\makecell{\frac{r}{1-r},\quad \quad r<1,\\\infty,\quad \quad r\geq 1.}$\\
\hline
\end{tabular}\label{table of phi divergences}
\end{table}
Using Lemma \ref{lemma for general phi-divergences}, \VS{the following result holds.}
\begin{theorem} \label{theorem phi divergencess}
\VS{Consider the DRCCMDP problem \eqref{DRCCMDP-opt-ex} under the uncertainty set defined by \eqref{uncertainty phi divergences set} for the $\phi$-divergences listed in Table \ref{table of function f}. If the reference distribution $\nu$ is a normal distribution with mean vector $\mu_{\nu}$ and positive definite covariance matrix $\Sigma_{\nu}$, the DRCCMDP problem \eqref{DRCCMDP-opt-ex} is equivalent to the following \emph{SOCP} problem}
\begin{align}\label{main reformulation SOCP phi divergences}
    &\max \quad y \nonumber\\
     \mbox{s.t.} \,\,\,\,\,& (\emph{i}) \quad \rho^{\emph{T}}\mu_{\nu}-\Phi^{(-1)}[f(\theta_\phi,\epsilon)]\|\Sigma_\nu^{\frac{1}{2}}\rho\|_2\geq y,\nonumber\\& (\emph{ii}) \quad \rho\in\mathcal{Q}_{\alpha}(\gamma),
\end{align}
\VS{where $\Phi^{(-1)}$ is the quantile of the standard normal distribution and the values of $\theta_\phi$, $\epsilon$ and $f(\theta_\phi,\epsilon)$ 
 for different $\phi$-divergences are given in Table \ref{table of function f}.
}
\begin{table}[ht]\small 
\centering
\caption{The function $f$ for selected $\phi-$divergences} \vspace{.2cm}
\scalebox{0.84}{\begin{tabular}{|*{5}{c|}}
\hline
Divergence & $f(\theta_\phi,\epsilon)$ & $\theta_\phi$, $\epsilon$\\
\hline
Kullback-Leibler & $\inf_{x \in (0,1)} \frac{\emph{e}^{-\theta_\phi}x^{1-\epsilon}-1}{x-1}$ & $\theta_\phi>0$, $0<\epsilon<1$ \\   
\hline
Variation distance & $1-\epsilon+\frac{\theta_\phi}{2}$ & $\theta_\phi>0$, $0<\epsilon<1$\\
\hline
Modified $\chi^2$ - distance & $\makecell{1-\epsilon + \frac{\sqrt{\theta_\phi^2+4\theta_\phi(\epsilon-\epsilon^2)}-(1-2\epsilon)\theta_\phi}{2 \theta_\phi+2}}$ & $\theta_\phi>0$, $0 < \epsilon < \frac{1}{2}$\\
\hline
Hellinger distance & $\makecell{\frac{-B+\sqrt{\Delta}}{2},\\\mbox{where}\quad B=-(2-(2-\theta_\phi)^2)\epsilon-\frac{(2-\theta_\phi)^2}{2},\\C=\left(\frac{(2-\theta_\phi)^2}{4}-\epsilon\right)^2,\\ \Delta=B^2-4C=(2-\theta_\phi)^2\left[4-(2-\theta_\phi)^2\right]\epsilon(1-\epsilon).}$ & $0 < \theta_\phi < 2 - \sqrt{2}$, $0<\epsilon<1$\\
\hline
\end{tabular}\label{table of function f}}
\end{table}
\end{theorem}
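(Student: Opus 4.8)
The plan is to collapse the distributionally robust chance constraint $(\text{i})$ of \eqref{DRCCMDP-opt-ex} into a single scalar inequality on the nominal probability $p := \mathbb{P}_\nu(O) = \mathbb{P}_\nu(\rho^{\text{T}}\hat R \geq y)$, and then to use the normality of $\nu$ to convert that inequality into the second-order cone constraint $(\text{i})$ of \eqref{main reformulation SOCP phi divergences}. First I would invoke Lemma \ref{lemma for general phi-divergences}: the worst-case probability $\inf_{F\in\mathcal{D}_4(\nu,\theta_\phi)}\mathbb{P}_F(\rho^{\text{T}}\hat R\geq y)$ equals its dual value, and the crucial structural feature is that this dual depends on the reference measure $\nu$ only through the two numbers $\mathbb{P}_\nu(O)$ and $1-\mathbb{P}_\nu(O)$, hence only through the scalar $p$. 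Thus the worst-case probability is a function $g_\phi(p)$ of $p$ alone (for fixed $\theta_\phi$), and constraint $(\text{i})$ reads $g_\phi(p)\geq 1-\epsilon$.

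Second, for each of the four divergences in Table \ref{table of phi divergences} I would substitute the explicit conjugate $\phi^*$ into the dual and carry out the two-dimensional optimization over $(\lambda,\beta)$ in closed form. Since $g_\phi$ is nondecreasing in $p$, the constraint $g_\phi(p)\geq 1-\epsilon$ is equivalent to a threshold condition $p\geq f(\theta_\phi,\epsilon)$, where $f$ is obtained by inverting $g_\phi$ at level $1-\epsilon$; the resulting expressions are exactly those recorded in Table \ref{table of function f}. This is the calibration of the ambiguity radius against the risk level carried out in Theorem 1 of \cite{jiang2016data}, specialised to the two-point weighting appearing in Lemma \ref{lemma for general phi-divergences}. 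A sanity check I would keep in mind is the no-ambiguity limit $\theta_\phi\to 0$, in which $f(\theta_\phi,\epsilon)\to 1-\epsilon$, recovering the ordinary chance constraint.

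Third, I would use that under the normal reference measure $\nu=\mathcal{N}(\mu_\nu,\Sigma_\nu)$ the scalar $\rho^{\text{T}}\hat R$ is univariate normal with mean $\rho^{\text{T}}\mu_\nu$ and standard deviation $\|\Sigma_\nu^{\frac{1}{2}}\rho\|_2$, which is strictly positive because $\Sigma_\nu\succ 0$ and $\rho\neq 0$ for every $\rho\in\mathcal{Q}_\alpha(\gamma)$ (summing the balance equations gives $\sum_{(s,a)}\rho(s,a)=1$). Hence $p = 1-\Phi\!\big((y-\rho^{\text{T}}\mu_\nu)/\|\Sigma_\nu^{\frac{1}{2}}\rho\|_2\big)$, and the threshold condition $p\geq f(\theta_\phi,\epsilon)$ rearranges, using the symmetry $\Phi^{(-1)}(1-f)=-\Phi^{(-1)}(f)$, into $\rho^{\text{T}}\mu_\nu-\Phi^{(-1)}[f(\theta_\phi,\epsilon)]\|\Sigma_\nu^{\frac{1}{2}}\rho\|_2\geq y$, which is precisely constraint $(\text{i})$ of \eqref{main reformulation SOCP phi divergences}. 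Since constraint $(\text{ii})$, $\rho\in\mathcal{Q}_\alpha(\gamma)$, is left untouched, the two problems are equivalent.

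I expect the main obstacle to be the second step: the inner maximisation over $(\lambda,\beta)$ must be performed separately for each divergence, and three of the four conjugates (for variation, modified $\chi^2$, and Hellinger) are piecewise, so one must track the regions where $\phi^*$ is finite and verify that the optimiser falls in the correct branch. Establishing monotonicity of $g_\phi$ in $p$ to justify inverting at level $1-\epsilon$, and confirming the admissible ranges of $\theta_\phi,\epsilon$ listed in Table \ref{table of function f} — for instance $0<\theta_\phi<2-\sqrt2$ for the Hellinger case, which is exactly the range keeping the discriminant $\Delta$ nonnegative — is the delicate bookkeeping, whereas the normal reduction in the third step is routine.
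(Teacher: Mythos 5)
Your proposal follows essentially the same route as the paper: invoke Lemma \ref{lemma for general phi-divergences} to reduce the distributionally robust constraint to a nominal chance constraint $\mathbb{P}_\nu(\rho^{\text{T}}\hat R\ge y)\ge f(\theta_\phi,\epsilon)$ — relying on Propositions 2--4 of \cite{jiang2016data} for Kullback--Leibler, variation and modified $\chi^2$, and on an explicit optimization over $(\lambda,\beta)$ for Hellinger — and then use normality of $\nu$ to convert this threshold into the second-order cone constraint, exactly as the paper does. One minor correction to your closing remark: in the Hellinger case the condition $0<\theta_\phi<2-\sqrt{2}$ is not what keeps $\Delta=(2-\theta_\phi)^2\left[4-(2-\theta_\phi)^2\right]\epsilon(1-\epsilon)$ nonnegative (that holds for all $\theta_\phi\le 4$); it is used to guarantee $1-\frac{(2-\theta_\phi)^2}{2}<0$, which is what allows one to discard the root $x_{\text{min}}$ and identify $f(\theta_\phi,\epsilon)=x_{\text{max}}=\frac{-B+\sqrt{\Delta}}{2}$.
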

\begin{proof}
Using Lemma \ref{lemma for general phi-divergences}, we prove that the constraint $(\text{i})$ of \eqref{DRCCMDP-opt-ex} is equivalent to the following constraint
\begin{align}\label{equi form}
    \mathbb{P}_{\nu}(\rho^{\text{T}}\hat{R} \geq y) \geq f(\theta_\phi,\epsilon).
\end{align}
Since $\nu$ is a normal distribution with mean vector $\mu_\nu$ and covariance matrix $\Sigma_\nu$, it is well known that \eqref{equi form} is equivalent to the constraint $(\text{i})$ of \eqref{main reformulation SOCP phi divergences}. The details of the proof for the Hellinger distance case is given in Appendix \ref{app_Hellinger}. The proofs for Kullback-Leibler, Variation distance and Modified $\chi^2$ - distance follow from Propositions 2, 3 and 4 of \cite{jiang2016data}.
\end{proof}

\subsection{Uncertainty set with Wasserstein distance}\label{Wasser-metric}
 We consider an uncertainty set defined using statistical distance metric called Wasserstein distance.
We show that the DRCCMDP problem \eqref{DRCCMDP-opt} is tractable if the reference distribution $\nu$ follows a discrete distribution whose scenarios are taken from historical data. We refer to Villani \cite{villani2009optimal,villani2021topics} for more details of the Wasserstein distance metric.

Let $\varphi$ be a closed, convex subset of $\mathbb{R}^{|\mathcal{K}| }$ and $p \in [1,\infty)$. Let $\mathcal{B}(\varphi)$ denote\AL{s} the Borel $\sigma-$ algebra on $\varphi$. Let $\mathcal{P}(\varphi)$ be the set of all probability measures defined on $\mathcal{B}(\varphi)$ and $\mathcal{P}_p(\varphi)$ denote the subset of $\mathcal{P}(\varphi)$ with finite $p-$ moment \VS{and it is defined as}
\begin{align*}
    \mathcal{P}_p(\varphi) = \left\{\mu \in \mathcal{P}(\varphi)\mid\int_{\xi \in \varphi}||\xi-\xi_0||_2^p \mu(\text{d} \xi)<\infty \,\,\,\mbox{for some} \,\,\, \xi_0 \in \varphi \right\}.
\end{align*}

It follows from the triangle inequality that the above definition of $\mathcal{P}_p(\varphi)$ does not depend on $\xi_0$.

\begin{definition}[Wasserstein distance]
The Wasserstein distance $W_p(\mu,\nu)$ between 

\noindent$\nu_1,\nu_2 \in \mathcal{P}_p(\varphi)$ is defined by
\begin{align*}
    W_p(\nu_1,\nu_2) = \left(\inf_{\gamma \in \mathcal{P}_{\nu_1,\nu_2}(\varphi \times \varphi)}\int_{\varphi \times \varphi}||x-z||_2^p \gamma(dx,dz)\right)^{\frac{1}{p}},
\end{align*}
where $\mathcal{P}_{\nu_1,\nu_2}(\varphi \times \varphi)$ denotes the set of all probability measures defined on $\mathcal{B}(\varphi \times \varphi)$ such that the marginal laws are $\nu_1$ and $\nu_2$.

\end{definition}
\VS{The uncertainty set using Wasserstein distance is defined by}
\begin{equation}\label{uncertainty Wasserstein set}
\mathcal{D}_5(\varphi,\nu,p,\theta_W) = \left\{F \in \mathcal{P}_p(\varphi)\mid
W_p(F,\nu) \leq \theta_W
\right\},
\end{equation}
where $\nu \in \mathcal{P}_p(\varphi)$ and $\theta_W > 0$.
\begin{lemma}\label{main lemma Wasserstein}
Consider an optimization problem 
\begin{align}\label{definition vP Wasserstein}
    \sup_{F\in \mathcal{D}_5(\varphi,\nu,p,\theta_W)}
    \mathbb{P}_{\AL{F}}(\rho^{\emph{T}} \hat{R} \leq y).
\end{align}
Then, the dual problem of \eqref{definition vP Wasserstein} is given by
\begin{align}\label{reformulation Wasserstein final}
   &\inf_{\lambda \geq 0} \left\{\lambda \theta_W^p -\int_{\varphi} \inf_{z \in \varphi}\left[\lambda||x-z||_2^p-\mathbf{1}_{\left\{\rho^{\text{T}} z \leq y\right\}}\right]\nu(\emph{d}x)\right\},
\end{align}
such that the strong duality holds \VS{and} \AL{the} optimal value\AL{s} of \AL{the} primal and \AL{the} dual problems are finite.

\end{lemma}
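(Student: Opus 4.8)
The plan is to recast the worst-case probability over the Wasserstein ball as an optimal-transport problem with a fixed reference marginal, and then dualize the single transport-budget constraint with one Lagrange multiplier. First I would use the fact that for $F,\nu\in\mathcal{P}_p(\varphi)$ the condition $W_p(F,\nu)\le\theta_W$ holds if and only if there is a coupling $\pi\in\mathcal{P}(\varphi\times\varphi)$ with $x$-marginal $\nu$, $z$-marginal $F$, and transport cost $\int\|x-z\|_2^p\,\pi(\text{d}x,\text{d}z)\le\theta_W^p$ (the optimal coupling is attained, and conversely any such $\pi$ certifies $W_p(F,\nu)^p\le\theta_W^p$). Since $F$ is a free decision variable, the optimization over $F\in\mathcal{D}_5(\varphi,\nu,p,\theta_W)$ can be replaced by an optimization over such couplings, so that the value $v_{\mathrm P}$ of \eqref{definition vP Wasserstein} equals
\[
v_{\mathrm P} = \sup_{\pi}\left\{\int \mathbf{1}_{\{\rho^{\text{T}} z\le y\}}\,\pi(\text{d}x,\text{d}z)\ \middle|\ \operatorname{marg}_x\pi=\nu,\ \int\|x-z\|_2^p\,\pi(\text{d}x,\text{d}z)\le\theta_W^p\right\}.
\]

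Next I would attach a multiplier $\lambda\ge0$ to the budget constraint and form the Lagrangian dual
\[
v_{\mathrm D}=\inf_{\lambda\ge0}\ \sup_{\pi:\,\operatorname{marg}_x\pi=\nu}\left\{\lambda\theta_W^p+\int\big(\mathbf{1}_{\{\rho^{\text{T}} z\le y\}}-\lambda\|x-z\|_2^p\big)\,\pi(\text{d}x,\text{d}z)\right\},
\]
for which weak duality $v_{\mathrm P}\le v_{\mathrm D}$ is immediate. The inner supremum decouples: disintegrating $\pi(\text{d}x,\text{d}z)=\nu(\text{d}x)\,\pi_x(\text{d}z)$ along its $x$-marginal, the conditional $\pi_x$ can concentrate at a pointwise maximizer of $z\mapsto\mathbf{1}_{\{\rho^{\text{T}} z\le y\}}-\lambda\|x-z\|_2^p$, giving
\[
\sup_{\pi:\,\operatorname{marg}_x\pi=\nu}\int\big(\mathbf{1}_{\{\rho^{\text{T}} z\le y\}}-\lambda\|x-z\|_2^p\big)\,\pi(\text{d}x,\text{d}z)=\int_\varphi\sup_{z\in\varphi}\big(\mathbf{1}_{\{\rho^{\text{T}} z\le y\}}-\lambda\|x-z\|_2^p\big)\nu(\text{d}x).
\]
Because $\{z:\rho^{\text{T}} z\le y\}$ is closed, the integrand is upper semicontinuous, which guarantees measurability of $x\mapsto\sup_z(\cdots)$ and the existence of a measurable (near-)maximizer, legitimizing this interchange. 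Rewriting $\sup_z(\cdots)=-\inf_z\big(\lambda\|x-z\|_2^p-\mathbf{1}_{\{\rho^{\text{T}} z\le y\}}\big)$ then turns $v_{\mathrm D}$ into exactly the expression \eqref{reformulation Wasserstein final}.

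The hard part will be closing the gap, i.e.\ establishing $v_{\mathrm P}=v_{\mathrm D}$ together with finiteness and attainment, since the support $\varphi$ is only closed and convex (not compact) and the loss $\mathbf{1}_{\{\rho^{\text{T}} z\le y\}}$ is discontinuous, so neither a compactness argument nor Sion's theorem applies directly. I would obtain strong duality by invoking the Wasserstein strong-duality theorem of Gao and Kleywegt \cite{gao2016distributionally} (see also \cite{esfahani2018data,zhao2018data}), whose hypotheses are met here: the loss is bounded and upper semicontinuous, the reference satisfies $\nu\in\mathcal{P}_p(\varphi)$, and $\theta_W>0$ furnishes the strict-feasibility (Slater-type) condition that precludes a duality gap. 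Finiteness of $v_{\mathrm P}$ is trivial as it is a probability bounded by $1$, and boundedness of the objective then confines the optimal $\lambda$ to a compact interval, yielding finiteness of $v_{\mathrm D}$. The delicate point is precisely the attainment and interchange in this nonconvex, noncompact regime, which is exactly what the upper-semicontinuity-plus-finite-$p$-moment hypotheses of the cited theorem are designed to control.
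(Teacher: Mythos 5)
Your proposal is correct and rests on essentially the same foundation as the paper's proof: both reduce the statement to the strong duality theorem of Gao and Kleywegt \cite{gao2016distributionally}, applied with the upper semicontinuous, bounded loss $\mathbf{1}_{\{\rho^{\text{T}} z \leq y\}}$ (so that the growth-rate/integrability hypotheses hold and both optimal values are finite). The additional coupling-and-Lagrangian derivation you sketch is a correct exposition of where the dual form comes from, but the decisive step --- closing the duality gap in the noncompact, discontinuous setting --- is delegated to the same cited theorem in both arguments.
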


\begin{proof}
\VS{
Let $\Xi$ be a  Polish space with metric $d$, $\mathcal{P}(\Xi)$ be the set of Borel probability measures on $\Xi$, $\nu \in \mathcal{P}(\Xi)$ and $\Psi \in L^1(\nu)$, where $L^1(\nu)$ represents the $L^1$ space of $\nu$ - measurable functions. 
It follows from Theorem 1 of \cite{gao2016distributionally} that
the following strong duality holds}
\begin{align} \label{strong duality wasserstein}
    &\sup_{\mu \in \mathcal{P}(\Xi)}\left\{\int_{\Xi}\Psi(\xi)\mu(d\xi) \,\mid \,W_p(\mu,\nu)\leq \theta_W \right\}\nonumber\\
   & = \inf_{\lambda \in \mathbb{R}, \lambda \geq 0}\left\{\lambda \theta_W^p-\int_{\Xi}\inf_{\xi \in \Xi}\left[\lambda d^p(\xi,\zeta)-\Psi(\xi)\right]\nu(\text{d}\zeta)\right\} <\infty,
\end{align}
\VS{provided the growth factor given by Definition 4 of \cite{gao2016distributionally} is finite.}
We apply this result in our case by choosing $\Xi = \varphi$,  $d$  as \AL{an} Euclidean metric and 
$\Psi(\xi) = \mathbf{1}_{\left\{\rho^{\text{T}} \xi \leq y\right\}}$ for all $\xi \in \varphi$. \VS{For this choice of $\Psi(\xi)$, it is easy to see from Definition 4 of \cite{gao2016distributionally} that the growth factor is zero. } 
Since $\left\{\xi \in \varphi\,\,|\,\,\rho^{\text{T}} \xi \leq y\right\}$ is a closed set, it is a Borel measurable set. Hence, it is clear that $\Psi \in L^1(\nu)$ for all $\nu \in \mathcal{P}(\varphi)$.  Then,  \eqref{strong duality wasserstein} reduces to
\begin{align*}
    \sup_{F \in \mathcal{D}_5(\varphi,\nu,p,\theta_W) }\mathbb{P}_{\AL{F}}\left(\rho^{\text{T}} \hat{R} \leq y \right) = \inf_{\lambda \geq 0}\left\{\lambda \theta_W^p-\int_{\varphi}\inf_{\xi \in \varphi}\left[\lambda ||\zeta-\xi||^p_2-\mathbf{1}_{\left\{\rho^{\text{T}} \xi \leq y\right\}}\right]\nu(\text{d}\zeta)\right\}.
\end{align*}

\end{proof}
\VS{We consider the case when $p=1$ and $\nu$ is a data-driven reference distribution, i.e., it is a discrete distribution with $H$ scenarios $\Tilde{\xi}_1,\ldots,\Tilde{\xi}_H$, where $\Tilde{\xi}_i \in \varphi$, for every $i=1,\ldots,H$. Using Lemma \ref{main lemma Wasserstein}, we propose a deterministic reformulation of the DRCCMDP problem \eqref{DRCCMDP-opt}.}

\begin{lemma}\label{reformulation wassertein ok}
If the distribution of $\hat{R}$ belongs to the uncertainty set defined by \eqref{uncertainty Wasserstein set}, the \emph{DRCCMDP} \eqref{DRCCMDP-opt} can be reformulated equivalently as the following deterministic problem
\begin{align}\label{reformulation Wasserstein p=1 data-driven}
    &\sup\,\,\,\,y \nonumber\\
    \mbox{s.t.} \,\,\,\,\,
    & (\emph{i}) \quad \theta_W - \frac{1}{H}\sum_{i=1}^H g_i \leq l \epsilon,\nonumber\\
    &  (\emph{ii}) \quad \inf_{z \in \varphi, \rho^{\emph{T}}z \leq y}||\Tilde{\xi_i}-z||_2\geq l + g_i, \ \forall \ i = 1,\ldots,H,\nonumber\\
    & (\emph{iii}) \quad  l > 0, \ \rho\in\mathcal{Q}_{\alpha}(\gamma), \ g_i \leq 0, \ \forall \ i = 1,\ldots,H. 
\end{align}
\end{lemma}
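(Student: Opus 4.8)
The plan is to specialise the duality identity of Lemma~\ref{main lemma Wasserstein} to the case $p=1$ with the empirical reference measure $\nu=\frac1H\sum_{i=1}^H\delta_{\Tilde{\xi_i}}$, and then to linearise the resulting inner infimum by introducing one slack variable per scenario. First I would plug this $\nu$ into \eqref{reformulation Wasserstein final}, so that the integral collapses to an average and constraint $(\text{i})$ of \eqref{DRCCMDP-opt} becomes: there exists $\lambda\ge 0$ with
\[
\lambda\theta_W-\frac1H\sum_{i=1}^H\ \inf_{z\in\varphi}\Bigl[\lambda\|\Tilde{\xi_i}-z\|_2-\mathbf{1}_{\{\rho^{\text{T}}z\le y\}}\Bigr]\le\epsilon .
\]
Evaluating the dual objective at $\lambda=0$ gives the value $1$ (each inner infimum equals $-1$ whenever $\{z\in\varphi:\rho^{\text{T}}z\le y\}\neq\emptyset$), which exceeds $\epsilon$; hence no feasible $\lambda$ is zero and I may restrict to $\lambda>0$, setting $l=1/\lambda>0$.

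Next I would evaluate each inner infimum by splitting $\varphi$ along the hyperplane $\rho^{\text{T}}z=y$:
\[
\inf_{z\in\varphi}\Bigl[\lambda\|\Tilde{\xi_i}-z\|_2-\mathbf{1}_{\{\rho^{\text{T}}z\le y\}}\Bigr]=\min\bigl\{\lambda d_i^{\le}-1,\ \lambda d_i^{>}\bigr\},
\]
where $d_i^{\le}=\inf_{z\in\varphi,\,\rho^{\text{T}}z\le y}\|\Tilde{\xi_i}-z\|_2$ and $d_i^{>}=\inf_{z\in\varphi,\,\rho^{\text{T}}z>y}\|\Tilde{\xi_i}-z\|_2$. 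The crucial observation is that $\Tilde{\xi_i}$ lies in exactly one of the two half-spaces, so for every $i$ one of $d_i^{\le},d_i^{>}$ is $0$; in particular $d_i^{>}\ge 0$ and $\min\{d_i^{\le}-l,\,d_i^{>}\}\le 0$ always holds. Defining $g_i=l\cdot\inf_{z\in\varphi}[\lambda\|\Tilde{\xi_i}-z\|_2-\mathbf{1}_{\{\rho^{\text{T}}z\le y\}}]=\min\{d_i^{\le}-l,\,d_i^{>}\}$ and multiplying the displayed $\lambda$-constraint through by $l>0$ (using $l\lambda=1$) turns it into $\theta_W-\frac1H\sum_i g_i\le l\epsilon$, while $g_i\le d_i^{\le}-l$ gives constraint $(\text{ii})$ and $g_i\le 0$ holds by the observation above. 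This yields a feasible point of \eqref{reformulation Wasserstein p=1 data-driven} with the same $(y,\rho)$.

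For the converse I would take any $(y,\rho,l,g)$ feasible for \eqref{reformulation Wasserstein p=1 data-driven} and set $\lambda=1/l>0$. From $g_i\le 0\le d_i^{>}$ together with $g_i\le d_i^{\le}-l$ (constraint $(\text{ii})$) I get $g_i\le\min\{d_i^{\le}-l,d_i^{>}\}=l\cdot\inf_{z\in\varphi}[\lambda\|\Tilde{\xi_i}-z\|_2-\mathbf{1}_{\{\rho^{\text{T}}z\le y\}}]$, and constraint $(\text{i})$ then forces, after dividing by $l$, the dual infimum of Lemma~\ref{main lemma Wasserstein} to be at most $\epsilon$. Since both problems maximise $y$ over the shared side constraint $\rho\in\mathcal{Q}_\alpha(\gamma)$ and their feasible sets project onto the same set of admissible $(y,\rho)$, the two optimal values coincide.

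I expect the main obstacle to be the exact linearisation of the inner infimum: one must see that replacing the genuine two-branch value $\min\{\lambda d_i^{\le}-1,\lambda d_i^{>}\}$ by the single computable distance $d_i^{\le}$ plus a slack $g_i\le 0$ is lossless, precisely because $d_i^{>}\ge 0$ is automatic and so the open half-space branch never has to be represented explicitly. A secondary point to make rigorous is that turning the infimum over $\lambda$ into an existentially quantified decision variable is exact here; this is guaranteed by the reduction to $\lambda>0$, which keeps the change of variables $l=1/\lambda$ well defined and avoids any attainment issue at the boundary $\lambda=0$.
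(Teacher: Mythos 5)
Your proposal is correct and follows essentially the same route as the paper's proof: specialise the strong duality of Lemma~\ref{main lemma Wasserstein} to the empirical reference measure, rule out $\lambda=0$ (the paper does this by noting that $\lambda=0$ forces each slack below $-1$ and hence violates the budget constraint, which is the same observation as your direct evaluation of the dual objective at $\lambda=0$), introduce one slack variable per scenario, and pass to $l=1/\lambda$, $g_i=t_i/\lambda$. Your explicit two-branch analysis of the inner infimum via $\min\{\lambda d_i^{\le}-1,\lambda d_i^{>}\}$ with one of the two distances vanishing is just a slightly more detailed account of the paper's splitting of constraint $(\text{ii})$ of \eqref{inequality 1} into the two constraints of \eqref{fc1}.
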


\begin{proof}
Using Lemma \ref{main lemma Wasserstein}, since $\nu$ is a discrete distribution with $H$ scenarios $\Tilde{\xi_1},...,\Tilde{\xi_H}$, the constraint $(\text{i})$ of \eqref{DRCCMDP-opt} \VS{can be equivalently written as}
\begin{align*}
  \lambda \theta_W - \frac{1}{H}\sum_{i=1}^H \inf_{z \in \varphi}\left[\lambda||\Tilde{\xi_i}-z||_2-\mathbf{1}_{\left\{\rho^{\text{T}}z\leq y\right\}}\right] \leq \epsilon, \ 
 \VS{\lambda \ge 0}.
\end{align*}
By introducing auxiliary variables $t_i$, $i = 1,...,H$, the above constraint can be rewritten as 
\begin{equation}\label{inequality 1}
\begin{cases}
    & (\text{i}) \quad \lambda \theta_W - \frac{1}{H}\sum_{i=1}^H t_i \leq \epsilon, \ \lambda \ge 0\\
    & (\text{ii}) \quad \inf_{z \in \varphi}\left[\lambda||\Tilde{\xi_i}-z||_2-\mathbf{1}_{\left\{\rho^{\text{T}}z \leq y\right\}}\right] \geq t_i, \ \forall\ i = 1,\ldots,H.
\end{cases}
\end{equation}
The constraint $(\text{ii})$ of \eqref{inequality 1} is equivalent to the following two constraints
\begin{equation}\label{fc1}
\begin{cases}
    & (\text{i}) \quad \inf_{z \in \varphi}\lambda||\Tilde{\xi_i}-z||_2\geq t_i, \ \forall \ i = 1,\ldots,H,\\
    & (\text{ii}) \quad \inf_{z \in \varphi,\rho^{\text{T}}z \leq y}\lambda||\Tilde{\xi_i}-z||_2-1 \geq t_i, \ \forall \ i = 1,\ldots,H.
\end{cases}
\end{equation}
Since $\lambda \geq 0$, $\inf_{z \in \varphi}\lambda||\Tilde{\xi_i}-z||_2 = 0$. Then, the constraint $(\text{i})$ of \eqref{fc1} is equivalent to $t_i \leq 0$, for every $i = 1,\ldots,H$. Moreover, if $\lambda=0$,  from the constraint $(\text{ii})$ of \eqref{fc1},  $t_i \leq -1$, for every $i = 1,\ldots,H$, which in turn implies $-\frac{1}{H}\sum_{i=1}^H t_i \geq 1$. This 
violates the constraint $(\text{i})$ of \eqref{inequality 1}. Hence, $\lambda>0$. Let $l= \frac{1}{\lambda}$ and $g_i=\frac{t_i}{\lambda}$, for every $i = 1,\ldots,H$. Therefore, the constraint $(\text{i})$ of  \eqref{DRCCMDP-opt} is equivalent to the following constraints
\begin{equation}
\begin{cases}
    & (\text{i}) \quad \theta_W - \frac{1}{H}\sum_{i=1}^H g_i \leq l\epsilon,\\
    & (\text{ii}) \quad \inf_{z \in \varphi, \rho^{\text{T}}z \leq y}||\Tilde{\xi_i}-z||_2\geq l+ g_i, \ \forall \ i = 1,\ldots,H,\\
    & (\text{iii}) \quad l > 0, \ \VS{g_i \le 0, \ \forall \ i = 1,\ldots,H.}
\end{cases}
\end{equation}
\VS{This implies that the DRCCMDP \eqref{DRCCMDP-opt} is equivalent to \eqref{reformulation Wasserstein p=1 data-driven}.}
\end{proof}
\VS{The constraint $(\text{ii})$ of \eqref{reformulation Wasserstein p=1 data-driven} includes $\inf$ term which makes it difficult to solve the problem directly. 
We show that the  optimization problem \eqref{reformulation Wasserstein p=1 data-driven} is equivalent to a MISOCP problem and a biconvex optimization problem for the case of full support and nonnegative support, respectively.
}

\subsubsection{DRCCMDP under Wasserstein distance based uncertainty set with full support}
\begin{lemma}\label{transformation lemma full support}
If $\varphi=\mathbb{R}^{|\mathcal{K}|}$,
\begin{align*} 
    \inf_{\rho^{\emph{T}}z \leq y}||\Tilde{\xi_i}-z||_2 = \max\left(0,\frac{\rho^{\emph{T}} \Tilde{\xi_i}-y}{||\rho||_2}\right), \ \forall \ i = 1,\ldots,H.
\end{align*}
\end{lemma}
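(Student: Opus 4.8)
The plan is to recognize that $\inf_{\rho^{\text{T}}z \leq y}\|\tilde{\xi}_i - z\|_2$ is exactly the Euclidean distance from the point $\tilde{\xi}_i$ to the closed half-space $H = \{z \in \mathbb{R}^{|\mathcal{K}|} \mid \rho^{\text{T}}z \leq y\}$, and to evaluate this distance by a standard projection argument. Throughout I assume $\rho \neq 0$; this is automatic for $\rho \in \mathcal{Q}_\alpha(\gamma)$, since summing the defining equality over all $s' \in S$ forces $\sum_{(s,a)}\rho(s,a) = 1$, so the denominator $\|\rho\|_2$ never vanishes. I would then split into two cases according to the sign of $\rho^{\text{T}}\tilde{\xi}_i - y$.

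First, if $\rho^{\text{T}}\tilde{\xi}_i \leq y$, then $\tilde{\xi}_i$ itself lies in $H$, so the admissible choice $z = \tilde{\xi}_i$ gives $\|\tilde{\xi}_i - z\|_2 = 0$. As the norm is nonnegative, the infimum is $0$, which coincides with $\max\!\left(0, (\rho^{\text{T}}\tilde{\xi}_i - y)/\|\rho\|_2\right)$ because the second argument is nonpositive here.

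Second, if $\rho^{\text{T}}\tilde{\xi}_i > y$, the lower bound follows from Cauchy--Schwarz: for every feasible $z$,
\[
\|\rho\|_2\,\|\tilde{\xi}_i - z\|_2 \geq \rho^{\text{T}}(\tilde{\xi}_i - z) = \rho^{\text{T}}\tilde{\xi}_i - \rho^{\text{T}}z \geq \rho^{\text{T}}\tilde{\xi}_i - y,
\]
hence $\|\tilde{\xi}_i - z\|_2 \geq (\rho^{\text{T}}\tilde{\xi}_i - y)/\|\rho\|_2$. To show the bound is attained, I exhibit the orthogonal projection of $\tilde{\xi}_i$ onto the bounding hyperplane, namely $z^\star = \tilde{\xi}_i - \frac{\rho^{\text{T}}\tilde{\xi}_i - y}{\|\rho\|_2^2}\,\rho$. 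A direct substitution gives $\rho^{\text{T}}z^\star = y$, so $z^\star$ is feasible, and $\|\tilde{\xi}_i - z^\star\|_2 = (\rho^{\text{T}}\tilde{\xi}_i - y)/\|\rho\|_2$, matching the lower bound. Thus the infimum equals $(\rho^{\text{T}}\tilde{\xi}_i - y)/\|\rho\|_2 = \max\!\left(0, (\rho^{\text{T}}\tilde{\xi}_i - y)/\|\rho\|_2\right)$, the last equality holding since the argument is now positive.

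Combining the two cases yields the claimed identity. There is no genuinely hard step, as this is the textbook point-to-half-space distance; the only points deserving care are verifying that the Cauchy--Schwarz lower bound is \emph{exactly} the value realized at $z^\star$ (so that the infimum, and not merely a bound, is obtained), and ruling out the degenerate case $\rho = 0$, where the denominator would vanish and $H$ would be either all of $\mathbb{R}^{|\mathcal{K}|}$ or empty.
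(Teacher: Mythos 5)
Your proof is correct, and it follows the same overall skeleton as the paper's (Appendix C): both split on the sign of $\rho^{\text{T}}\Tilde{\xi_i}-y$, and the trivial case is handled identically. Where you differ is in the nontrivial case. The paper first argues, by a separate contradiction involving the segment from $z^*$ to $\Tilde{\xi_i}$, that the minimizer must lie on the bounding hyperplane $\{z \mid \rho^{\text{T}}z = y\}$, and then solves the resulting equality-constrained problem via the KKT stationarity condition $2(\Tilde{\xi_i}-z^*)-\lambda\rho=0$, extracting the distance by taking inner products with $\rho$ and with $\Tilde{\xi_i}-z^*$. You instead obtain the lower bound directly over the whole half-space from Cauchy--Schwarz and then exhibit the explicit projection point $z^\star = \Tilde{\xi_i} - \frac{\rho^{\text{T}}\Tilde{\xi_i}-y}{\|\rho\|_2^2}\rho$ attaining it. This is more elementary and slightly tighter: it removes the need for the preliminary ``minimizer lies on the hyperplane'' argument and for any appeal to Lagrangian optimality, at the cost of having to guess the minimizer (which is standard here). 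Your observation that $\|\rho\|_2 \neq 0$ for $\rho\in\mathcal{Q}_{\alpha}(\gamma)$ (since the defining equalities summed over $s'$ force $\sum_{(s,a)}\rho(s,a)=1$) is a point the paper leaves implicit, and it is a worthwhile addition since the claimed formula divides by $\|\rho\|_2$.
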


The proof is given in Appendix \ref{app_C}.
Using Lemma \ref{transformation lemma full support}, we have the following result.

\begin{lemma}\label{lemma intermediare}
The optimization problem \eqref{reformulation Wasserstein p=1 data-driven} is equivalent to the following optimization problem 
\begin{align}\label{deterministic wasserstein full support}
    &\sup\,\,\,\,y \nonumber\\
    \mbox{s.t.} \,\,\,\,\,
    & (\emph{i}) \quad \beta \theta_W - \frac{1}{H}\sum_{i=1}^H b_i \leq t \epsilon,\nonumber\\
    & (\emph{ii}) \quad \max\left(0,\rho^{\emph{T}}\Tilde{\xi_i}-y \right) \geq b_i+t, \ \forall \ i = 1,\ldots,H,\nonumber\\
    & (\emph{iii}) \quad ||\rho||_2 \leq \beta, \ t \geq 0, \ \beta > 0,  \rho\in\mathcal{Q}_{\alpha}(\gamma), \ b_i \leq 0,\ \forall \ i = 1,\ldots,H.
\end{align}
\end{lemma}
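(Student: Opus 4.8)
The plan is to substitute the closed-form expression from Lemma~\ref{transformation lemma full support} into constraint $(\text{ii})$ of \eqref{reformulation Wasserstein p=1 data-driven} and then to clear the denominator $\|\rho\|_2$ by a homogeneous rescaling of the auxiliary variables. First I would record a fact that makes the division legitimate: every occupation measure $\rho\in\mathcal{Q}_{\alpha}(\gamma)$ satisfies $\sum_{(s,a)\in\mathcal{K}}\rho(s,a)=1$ (sum the defining equalities of $\mathcal{Q}_{\alpha}(\gamma)$ over $s'\in S$ and use $\sum_{s'}p(s,a,s')=1$ together with $\sum_{s'}\gamma(s')=1$). Hence $\rho\neq 0$ and $\|\rho\|_2>0$, so dividing or multiplying by $\|\rho\|_2$ is always valid. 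Since $\varphi=\mathbb{R}^{|\mathcal{K}|}$, Lemma~\ref{transformation lemma full support} rewrites $(\text{ii})$ of \eqref{reformulation Wasserstein p=1 data-driven} as $\max\!\left(0,\tfrac{\rho^{\text{T}}\Tilde{\xi_i}-y}{\|\rho\|_2}\right)\ge l+g_i$ for each $i=1,\ldots,H$.

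The key step is the scaling $\beta=\|\rho\|_2$, $t=l\|\rho\|_2$ and $b_i=g_i\|\rho\|_2$. Multiplying the rewritten constraint $(\text{ii})$ by $\|\rho\|_2>0$ and using the positive homogeneity of $\max(0,\cdot)$ yields $\max(0,\rho^{\text{T}}\Tilde{\xi_i}-y)\ge t+b_i$, which is exactly $(\text{ii})$ of \eqref{deterministic wasserstein full support}; multiplying $(\text{i})$ of \eqref{reformulation Wasserstein p=1 data-driven} by $\|\rho\|_2$ gives $\beta\theta_W-\tfrac{1}{H}\sum_{i=1}^H b_i\le t\epsilon$ with $\beta=\|\rho\|_2$; and the conditions $l>0$, $g_i\le 0$ translate into $t>0$ (hence $t\ge 0$), $\beta>0$ and $b_i\le 0$. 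This proves the forward inclusion, with $\beta$ held at the equality $\beta=\|\rho\|_2$.

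It then remains to justify replacing the nonconvex equality $\beta=\|\rho\|_2$ by the second-order cone constraint $\|\rho\|_2\le\beta$, and to run the argument in reverse. Since $\beta$ appears only in $(\text{i})$, where it carries the positive coefficient $\theta_W>0$, and in $(\text{iii})$ through $\|\rho\|_2\le\beta$, enlarging $\beta$ can only tighten $(\text{i})$ and never relaxes any other constraint; therefore at optimality one may always decrease $\beta$ down to $\|\rho\|_2$, so the relaxed problem shares the optimal value of the one with equality. For the reverse inclusion, given any feasible point of \eqref{deterministic wasserstein full support} I would first argue $t>0$: if $t=0$, then $(\text{i})$ would read $\beta\theta_W-\tfrac{1}{H}\sum_{i=1}^H b_i\le 0$, which is impossible because $\beta\theta_W>0$ and $-\tfrac{1}{H}\sum_{i=1}^H b_i\ge 0$. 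With $t>0$, set $l=t/\|\rho\|_2>0$ and $g_i=b_i/\|\rho\|_2\le 0$; dividing $(\text{ii})$ by $\|\rho\|_2$ and invoking Lemma~\ref{transformation lemma full support} recovers $(\text{ii})$ of \eqref{reformulation Wasserstein p=1 data-driven}, while dividing $(\text{i})$ by $\|\rho\|_2$ and using $\|\rho\|_2\le\beta$ recovers $(\text{i})$ (the slack in $\|\rho\|_2\le\beta$ only helps the inequality). As the objective $y$ and the decision $\rho$ are untouched by every substitution, the two problems attain the same optimal value.

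The main obstacle is not the algebra but the bookkeeping around the auxiliary variable $\beta$: one must argue cleanly that the convex relaxation $\|\rho\|_2\le\beta$ is tight at optimality, and one must secure the strict positivity facts $\|\rho\|_2>0$ and $t>0$ that make the homogeneous rescaling well defined. Without these, the division by $\|\rho\|_2$ and the equivalence of the two scaled constraint systems could fail.
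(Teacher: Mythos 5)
Your proof is correct and follows essentially the same route as the paper: apply Lemma \ref{transformation lemma full support}, introduce the auxiliary variable $\beta$ with the rescaling $t=\beta l$, $b_i=\beta g_i$, and check both directions. You additionally supply the details the paper leaves as ``easy to see'' --- namely $\|\rho\|_2>0$ from $\sum_{(s,a)}\rho(s,a)=1$, the impossibility of $t=0$ in the reverse direction, and the tightness of the relaxation $\|\rho\|_2\le\beta$ --- all of which are verified correctly.
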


\begin{proof}\VS{
Using Lemma \ref{transformation lemma full support}, the constraint $(\text{ii})$ of problem \eqref{reformulation Wasserstein p=1 data-driven} can \AL{be} written as 
\[
\max\left(0,\frac{\rho^{\text{T}} \Tilde{\xi_i}-y}{||\rho||_2}\right)\geq l + g_i, \ \forall \ i = 1,...,H.
\]
Let $\beta > 0$ be an auxiliary variable. Then, under the transformations  $t=\beta l$, $b_i=\beta g_i$, for every $i = 1,...,H$, it is easy to see that \eqref{reformulation Wasserstein p=1 data-driven} is equivalent to
\eqref{deterministic wasserstein full support}.}
\end{proof}
\VS{It is clear that a vector $(y,\rho,\beta,(b_i)_{i=1}^H,t)$ such that $\rho\in \mathcal{Q}_{\alpha}(\gamma)$,   $\beta = ||\rho||_2, b_i=0$, for every  $i = 1,\ldots,H$,  $t=\frac{\theta_W}{\epsilon}||\rho||_2$ and $y=\min_{i = 1,\ldots,H}(\rho^{\text{T}}\Tilde{\xi_i})-\frac{\theta_W}{\epsilon}||\rho||_2$ is a feasible solution of \eqref{deterministic wasserstein full support}.
Therefore, the optimal solutions of \eqref{deterministic wasserstein full support} and the following optimization problem are \AL{the} same}
\begin{align}\label{extra deterministic wasserstein full support}
    &\sup\,\,\,\,y \nonumber\\
    \mbox{s.t.} \,\,\,\,\,
    & (\text{i}) \quad \beta \theta_W - \frac{1}{H}\sum_{i=1}^H b_i \leq t \epsilon,\nonumber\\
    & (\text{ii}) \quad \max\left(0,\rho^{\text{T}}\Tilde{\xi_i}-y \right) \geq b_i+t, \ \forall \ i = 1,\ldots,H,\nonumber\\
    & (\text{iii}) \quad y \geq \min_{i = 1,\ldots,H}(\rho^{\text{T}}\Tilde{\xi_i})-\frac{\theta_W}{\epsilon}||\rho||_2,\nonumber\\
    & (\text{iv}) \quad ||\rho||_2 \leq \beta, \ t \geq 0, \ \beta > 0,  \rho\in\mathcal{Q}_{\alpha}(\gamma), \ b_i \leq 0,\ \forall \ i = 1,\ldots,H.
\end{align}
\VS{We reformulate the problem \eqref{extra deterministic wasserstein full support} as a\AL{n} MISOCP problem. In order to do that, we  define a constant $M=\left(\frac{\theta_W}{\epsilon}+2 \max_{i=1,\ldots,H}||\Tilde{\xi_i}||_2\right)$ for which the following result holds.}
\begin{lemma}\label{sufficient condition to apply big-M method}
\VS{ For every feasible solution of \eqref{extra deterministic wasserstein full support}, $M \geq |y-\rho^{\emph{T}} \Tilde{\xi_i}|$ for all $i = 1,\ldots,H$.}
\end{lemma}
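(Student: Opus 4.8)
The plan is to control $\left|y-\rho^{\text{T}}\Tilde{\xi_i}\right|$ by bounding the two terms $y$ and $\rho^{\text{T}}\Tilde{\xi_i}$ separately and then invoking the triangle inequality $\left|y-\rho^{\text{T}}\Tilde{\xi_i}\right|\le |y|+\left|\rho^{\text{T}}\Tilde{\xi_i}\right|$. The key preliminary observation, which makes everything else routine, is that every $\rho\in\mathcal{Q}_{\alpha}(\gamma)$ is a probability vector: summing the equality constraints defining $\mathcal{Q}_{\alpha}(\gamma)$ over all $s'\in S$ and using $\sum_{s'}p(s,a,s')=1$ and $\sum_{s'}\gamma(s')=1$ yields $\sum_{(s,a)}\rho(s,a)=1$, while $\rho\ge 0$ by definition. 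Hence $||\rho||_2\le||\rho||_1=1$, and Cauchy--Schwarz gives $\left|\rho^{\text{T}}\Tilde{\xi_i}\right|\le||\rho||_2\,||\Tilde{\xi_i}||_2\le\max_{k=1,\ldots,H}||\Tilde{\xi_k}||_2$ for every $i$.

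Next I would bound $y$. The lower bound is immediate from constraint $(\text{iii})$ of \eqref{extra deterministic wasserstein full support}, which gives $y\ge\min_{k}(\rho^{\text{T}}\Tilde{\xi_k})-\frac{\theta_W}{\epsilon}||\rho||_2\ge-\max_{k}||\Tilde{\xi_k}||_2-\frac{\theta_W}{\epsilon}$, again using $||\rho||_2\le 1$. The upper bound is the main obstacle, and I would establish it by contradiction. Suppose $y\ge\rho^{\text{T}}\Tilde{\xi_i}$ held for every $i$. Then the left-hand side of constraint $(\text{ii})$ equals $0$ for all $i$, forcing $b_i\le-t$ and hence $\frac1H\sum_{i=1}^H b_i\le-t$. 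Feeding this into constraint $(\text{i})$ yields $\beta\theta_W\le t\epsilon+\frac1H\sum_{i=1}^H b_i\le t(\epsilon-1)\le 0$, where the last inequality uses $t\ge 0$ and $\epsilon<1$. This contradicts $\beta>0$ and $\theta_W>0$. Therefore there exists an index $j$ with $y<\rho^{\text{T}}\Tilde{\xi_j}\le\max_{k}||\Tilde{\xi_k}||_2$, which supplies the required upper bound on $y$.

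Combining the two bounds gives $|y|\le\max_{k}||\Tilde{\xi_k}||_2+\frac{\theta_W}{\epsilon}$, so that $\left|y-\rho^{\text{T}}\Tilde{\xi_i}\right|\le|y|+\left|\rho^{\text{T}}\Tilde{\xi_i}\right|\le\frac{\theta_W}{\epsilon}+2\max_{k}||\Tilde{\xi_k}||_2=M$ for all $i=1,\ldots,H$, as claimed. The only delicate point is the contradiction step for the upper bound on $y$; it relies essentially on $\epsilon\in(0,1)$ together with the sign restrictions $b_i\le 0$, $t\ge 0$, $\beta>0$ and the strict positivity $\theta_W>0$. Everything else reduces to the simplex structure of $\mathcal{Q}_{\alpha}(\gamma)$ and elementary norm estimates.
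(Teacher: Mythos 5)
Your proof is correct, and its overall skeleton matches the paper's: both arguments bound $|y|$ and $|\rho^{\text{T}}\Tilde{\xi_i}|$ separately, use constraint $(\text{iii})$ of \eqref{extra deterministic wasserstein full support} together with $\|\rho\|_2\le\|\rho\|_1=1$ and Cauchy--Schwarz for the lower bound on $y$, and finish with the triangle inequality. The one step you handle genuinely differently is the upper bound on $y$, i.e.\ the existence of an index $j$ with $\rho^{\text{T}}\Tilde{\xi_j}>y$. The paper obtains this by stepping back to the original distributionally robust constraint: since the nominal distribution $\nu$ lies in the Wasserstein ball \eqref{uncertainty Wasserstein set}, feasibility forces $\frac{1}{H}\sum_{i}\mathbf{1}_{\{\rho^{\text{T}}\Tilde{\xi_i}\le y\}}\le\epsilon<1$, so some scenario must exceed $y$. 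You instead derive the same conclusion purely algebraically inside the reformulated problem, showing that if $\max(0,\rho^{\text{T}}\Tilde{\xi_i}-y)=0$ for every $i$ then constraints $(\text{i})$ and $(\text{ii})$ together with $b_i\le 0$, $t\ge 0$ force $\beta\theta_W\le t(\epsilon-1)\le 0$, contradicting $\beta>0$, $\theta_W>0$. Your variant is slightly more self-contained, since it does not lean on the previously established equivalence between \eqref{extra deterministic wasserstein full support} and constraint $(\text{i})$ of \eqref{DRCCMDP-opt}; the paper's variant is shorter but implicitly reuses that chain of lemmas. Both are valid, and both yield exactly the constant $M=\frac{\theta_W}{\epsilon}+2\max_{i=1,\ldots,H}\|\Tilde{\xi_i}\|_2$.
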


The proof is given in Appendix \ref{app_D}.

\begin{theorem}
Consider the \emph{DRCCMDP}  problem \eqref{DRCCMDP-opt}. We assume that the distribution of $\hat{R}$ belongs to the uncertainty set defined by \eqref{uncertainty Wasserstein set} and $\varphi=\mathbb{R}^{|\mathcal{K}|}$. Then, the \emph{DRCCMDP} \eqref{DRCCMDP-opt} can be reformulated equivalently as the following MISOCP
\begin{align}\label{reformulation Wasserstein p=1 data-driven big M-method}
    &\max \quad y \nonumber\\
    \mbox{s.t.} \,\,\,\,\,
    & (\emph{i}) \quad \beta \theta_W - \frac{1}{H}\sum_{i=1}^H b_i \leq t \epsilon,\nonumber\\
    & (\emph{ii}) \quad M \eta_i \geq b_i+t, \ \forall \ i = 1,\ldots,H,\nonumber\\
    & (\emph{iii}) \quad M(1-\eta_i) + \rho^{\emph{T}} \Tilde{\xi_i}-y \geq b_i+t, \ \forall \ i = 1,\ldots,H,\nonumber\\
    & (\emph{iv}) \quad \eta_i \in \left\{0,1\right\}, \ \forall \ i = 1,\ldots,H, \nonumber\\
    & (\emph{v})\quad ||\rho||_2 \leq \beta, t\geq0, \beta >0, \rho\in\mathcal{Q}_{\alpha}(\gamma), b_i \leq 0, \ \forall \ i = 1,\ldots,H.
\end{align}
\end{theorem}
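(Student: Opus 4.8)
The plan is to chain the reformulations already established and then linearise the only remaining nonconvexity, namely the term $\max(0,\rho^{\text{T}}\Tilde{\xi_i}-y)$ appearing in constraint (ii) of \eqref{extra deterministic wasserstein full support}, by a disjunctive big-$M$ argument. By Lemma \ref{reformulation wassertein ok} the DRCCMDP \eqref{DRCCMDP-opt} is equivalent to \eqref{reformulation Wasserstein p=1 data-driven}, which by Lemma \ref{lemma intermediare} is equivalent to \eqref{deterministic wasserstein full support}; and by the discussion preceding \eqref{extra deterministic wasserstein full support} the problems \eqref{deterministic wasserstein full support} and \eqref{extra deterministic wasserstein full support} have the same optimal value. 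Hence it suffices to show that the MISOCP \eqref{reformulation Wasserstein p=1 data-driven big M-method} attains the same optimal value as these two problems.

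The key observation is that, since $\max(a,b)\ge c$ holds if and only if $a\ge c$ or $b\ge c$, constraint (ii) of \eqref{extra deterministic wasserstein full support} is the disjunction $(0\ge b_i+t)\lor(\rho^{\text{T}}\Tilde{\xi_i}-y\ge b_i+t)$ for each $i$. I would encode this disjunction with a binary selector $\eta_i\in\{0,1\}$ through the big-$M$ inequalities (ii) and (iii) of \eqref{reformulation Wasserstein p=1 data-driven big M-method}, using the constant $M=\frac{\theta_W}{\epsilon}+2\max_{i}\|\Tilde{\xi_i}\|_2$, whose validity certificate $M\ge|y-\rho^{\text{T}}\Tilde{\xi_i}|$ on the feasible set of \eqref{extra deterministic wasserstein full support} is exactly Lemma \ref{sufficient condition to apply big-M method}.

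For the first inequality, I start from an optimal solution $(y,\rho,\beta,(b_i),t)$ of \eqref{extra deterministic wasserstein full support} and set $\eta_i=1$ when $\rho^{\text{T}}\Tilde{\xi_i}-y\ge b_i+t$ and $\eta_i=0$ otherwise, in which case $0\ge b_i+t$. I then verify that both big-$M$ constraints hold: when $\eta_i=0$, inequality (iii) is automatically satisfied because $M+\rho^{\text{T}}\Tilde{\xi_i}-y\ge 0\ge b_i+t$ by Lemma \ref{sufficient condition to apply big-M method}, and when $\eta_i=1$, inequality (ii) is automatically satisfied because $b_i+t\le\max(0,\rho^{\text{T}}\Tilde{\xi_i}-y)\le|\rho^{\text{T}}\Tilde{\xi_i}-y|\le M$. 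This yields a feasible point of \eqref{reformulation Wasserstein p=1 data-driven big M-method} with the same value $y$, so the optimal value of the MISOCP is at least that of \eqref{extra deterministic wasserstein full support}. Conversely, given any feasible $(y,\rho,\beta,(b_i),t,(\eta_i))$ of the MISOCP, I recover constraint (ii) of \eqref{deterministic wasserstein full support}: if $\eta_i=0$ then (ii) gives $0\ge b_i+t$, and if $\eta_i=1$ then (iii) gives $\rho^{\text{T}}\Tilde{\xi_i}-y\ge b_i+t$, so in either case $\max(0,\rho^{\text{T}}\Tilde{\xi_i}-y)\ge b_i+t$, the remaining constraints being common to both problems. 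Combining the two directions with the equality of the optimal values of \eqref{deterministic wasserstein full support} and \eqref{extra deterministic wasserstein full support} closes the chain and identifies the DRCCMDP value with that of the MISOCP.

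The main obstacle I anticipate is the exactness of the big-$M$ linearisation, and in particular the asymmetry between the two problems: the extra lower bound on $y$ present in \eqref{extra deterministic wasserstein full support} is absent from the MISOCP, so one must be careful to apply the forward direction on \eqref{extra deterministic wasserstein full support}, where the bound of Lemma \ref{sufficient condition to apply big-M method} is guaranteed, and the reverse direction on \eqref{deterministic wasserstein full support}, relying on their shared optimal value to glue the inequalities together. Showing that $M$ is simultaneously large enough to keep the inactive branch slack while not admitting spurious optimisers is the delicate point that the two-sided estimate $\max(0,\rho^{\text{T}}\Tilde{\xi_i}-y)\le|\rho^{\text{T}}\Tilde{\xi_i}-y|\le M$ is designed to settle.
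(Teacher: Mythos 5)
Your proposal is correct and follows essentially the same route as the paper: chaining Lemma \ref{reformulation wassertein ok}, Lemma \ref{lemma intermediare} and the equivalence of \eqref{deterministic wasserstein full support} with \eqref{extra deterministic wasserstein full support}, then linearising $\max(0,\rho^{\text{T}}\Tilde{\xi_i}-y)\geq b_i+t$ by the binary selector $\eta_i$ with the big-$M$ bound certified by Lemma \ref{sufficient condition to apply big-M method}. Your handling of the asymmetry (forward direction on \eqref{extra deterministic wasserstein full support}, reverse direction landing in \eqref{deterministic wasserstein full support}) is only a cosmetic variant of the paper's device of appending the lower bound \eqref{inbet-E1} to the MISOCP.
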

\AL{Notice that the parameter M is the well known big-M constant.}
\begin{proof}
\VS{Since, the distribution of $\hat{R}$ belongs to the uncertainty set defined by \eqref{uncertainty Wasserstein set}, the DRCCMDP problem is equivalent to \eqref{extra deterministic wasserstein full support}. We show that \eqref{extra deterministic wasserstein full support} and \eqref{reformulation Wasserstein p=1 data-driven big M-method} are equivalent. }
\NGU{It is clear that a vector $(y,\rho,\beta,(b_i)_{i=1}^H,(\eta_i)_{i=1}^H,t)$ such that $\rho\in \mathcal{Q}_{\alpha}(\gamma)$,   $\beta = ||\rho||_2, b_i=0$, $t=\frac{\theta_W}{\epsilon}||\rho||_2$, $\eta_i=1$, for every $i = 1,\ldots,H$, and $y=\min_{i = 1,\ldots,H}(\rho^{\text{T}}\Tilde{\xi_i})-\frac{\theta_W}{\epsilon}||\rho||_2$ is a feasible solution of \eqref{reformulation Wasserstein p=1 data-driven big M-method}. 
\VS{Therefore, the optimal solution of \eqref{reformulation Wasserstein p=1 data-driven big M-method} does not change if we add constraint \eqref{inbet-E1} given below 
\begin{equation}\label{inbet-E1}
y \geq \min_{i = 1,\ldots,H}(\rho^{\text{T}}\Tilde{\xi_i})-\frac{\theta_W}{\epsilon}||\rho||_2,
\end{equation}
to the feasible region of \eqref{reformulation Wasserstein p=1 data-driven big M-method}. 
Now, it is enough to show that the constraint $(\text{ii})$ of \eqref{extra deterministic wasserstein full support} is equivalent to $(\text{ii})-(\text{iv})$ of \eqref{reformulation Wasserstein p=1 data-driven big M-method}. 
}
Let the constraint $(\text{ii})$ of  
\eqref{extra deterministic wasserstein full support} be satisfied, i.e.,
\begin{align}\label{nm1}
\max\left(0,\rho^{\text{T}}\Tilde{\xi_i}-y \right) \geq b_i+t,\ \forall \ i =1,\ldots,H.
\end{align}
For each $i=1,\ldots,H$, we consider two cases as follows:}

\noindent\textbf{Case 1:} If $\max\left(0,\rho^{\text{T}}\Tilde{\xi_i}-y \right)=0$, by choosing $\eta_i=0$, \eqref{nm1} is equivalent to the constraint $(\text{ii})$ of \eqref{reformulation Wasserstein p=1 data-driven big M-method}. Moreover, using Lemma \ref{sufficient condition to apply big-M method}, we have
\[
M \geq |y-\rho^{\text{T}}\Tilde{\xi_i}|.
\]
Therefore,
\[
M(1-\eta_i) + \rho^{\text{T}}\Tilde{\xi_i}-y\ge M-|y-\rho^{\text{T}}\Tilde{\xi_i}|\ge 0\ge b_i+t.
\]
\noindent\textbf{Case 2:} If $\max\left(0,\rho^{\text{T}}\Tilde{\xi_i}-y \right)=\rho^{\text{T}}\Tilde{\xi_i}-y$, by choosing $\eta_i=1$, \eqref{nm1} is equivalent to the constraint $(\text{iii})$ of \eqref{reformulation Wasserstein p=1 data-driven big M-method}. Moreover, using Lemma \ref{sufficient condition to apply big-M method}, we have
\[
 M \eta_i = M \geq \rho^{\text{T}}\Tilde{\xi_i}-y \geq b_i+t.
 \]
This implies that there exists $\eta_i\in \{0,1\}$ such that $(\text{ii})-(\text{iv})$ of \eqref{reformulation Wasserstein p=1 data-driven big M-method} are satisifed. Conversely, suppose $(\text{ii})-(\text{iv})$ of  \eqref{reformulation Wasserstein p=1 data-driven big M-method} has a feasible solution. If $\eta_i=1$,  the constraint $(\text{iii})$ of \eqref{reformulation Wasserstein p=1 data-driven big M-method} implies the constraint $(\text{ii})$ of \eqref{extra deterministic wasserstein full support}. If $\eta_i=0$,  the constraint $(\text{ii})$ of \eqref{reformulation Wasserstein p=1 data-driven big M-method} implies the constraint $(\text{ii})$ of \eqref{extra deterministic wasserstein full support}. 
\end{proof}

\begin{remark}
\VS{An MISOCP problem can be solved efficiently \AL{with} BONMIN, PAJARITO or BARON solvers.}
\end{remark}

\subsubsection{DRCCMDP under Wasserstein distance based uncertainty set with nonnegative support}
\begin{lemma}\label{strong duality nonnegative wasserstein}
Let $\varphi=\mathbb{R}_+^{|\mathcal{K}|}$ and consider an optimization problem
\begin{align}\label{dual non wassertein}
    \inf_{z \in \varphi, \rho^{\emph{T}}z \leq y}||\Tilde{\xi_i}-z||_2.
\end{align}
 The dual problem of \eqref{dual non wassertein} is given by
\begin{align*}
    &\max\quad \lambda_i(\rho^{\emph{T}}\Tilde{\xi_i}-y) - \zeta_i^{\emph{T}}\Tilde{\xi_i} \nonumber\\
    \mbox{s.t.} \,\,\,\,\,
    &  \quad ||\zeta_i-\lambda_i \rho||_2 \leq 1, \ \zeta_i \in \mathbb{R}_+^{|\mathcal{K}|}, \lambda_i \geq 0,\ 
\end{align*}
such that the strong duality holds.
\end{lemma}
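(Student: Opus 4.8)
The plan is to recognize the primal problem \eqref{dual non wassertein} as a convex minimum-distance (projection) problem over the polyhedron $\{z \in \mathbb{R}_+^{|\mathcal{K}|} \mid \rho^{\text{T}} z \leq y\}$ and to derive the stated dual as its Lagrangian dual. First I would attach a multiplier $\lambda_i \geq 0$ to the constraint $\rho^{\text{T}} z - y \leq 0$ and a multiplier vector $\zeta_i \in \mathbb{R}_+^{|\mathcal{K}|}$ to the constraint $z \geq 0$, forming the Lagrangian
\begin{align*}
L(z,\lambda_i,\zeta_i) = \|\Tilde{\xi_i} - z\|_2 + \lambda_i(\rho^{\text{T}} z - y) - \zeta_i^{\text{T}} z,
\end{align*}
so that the dual function is $g(\lambda_i,\zeta_i) = \inf_{z} L(z,\lambda_i,\zeta_i)$ and the dual problem is $\max_{\lambda_i \geq 0,\ \zeta_i \in \mathbb{R}_+^{|\mathcal{K}|}} g(\lambda_i,\zeta_i)$.

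The key step is to evaluate this unconstrained infimum in closed form. I would substitute $w = \Tilde{\xi_i} - z$, which gives
\begin{align*}
g(\lambda_i,\zeta_i) = \lambda_i(\rho^{\text{T}} \Tilde{\xi_i} - y) - \zeta_i^{\text{T}} \Tilde{\xi_i} + \inf_{w \in \mathbb{R}^{|\mathcal{K}|}}\left\{\|w\|_2 + (\zeta_i - \lambda_i \rho)^{\text{T}} w\right\}.
\end{align*}
Writing $c = \zeta_i - \lambda_i \rho$, the inner infimum is (up to sign) the conjugate of the Euclidean norm: by Cauchy--Schwarz $\|w\|_2 + c^{\text{T}} w \geq (1-\|c\|_2)\|w\|_2$, so the infimum equals $0$, attained at $w=0$, whenever $\|c\|_2 \leq 1$, whereas taking $w = -tc$ with $t \to \infty$ drives it to $-\infty$ when $\|c\|_2 > 1$. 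Hence $g$ is finite and equal to $\lambda_i(\rho^{\text{T}} \Tilde{\xi_i} - y) - \zeta_i^{\text{T}} \Tilde{\xi_i}$ precisely on the set $\|\zeta_i - \lambda_i \rho\|_2 \leq 1$, and $-\infty$ elsewhere; substituting this back into the dual problem yields exactly the claimed formulation.

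It remains to justify strong duality, which I expect to be the main obstacle, since the feasible set $\{z \in \mathbb{R}_+^{|\mathcal{K}|} \mid \rho^{\text{T}} z \leq y\}$ need not contain a strictly interior (Slater) point; indeed, when $y=0$ and $\rho \neq 0$ every feasible $z$ satisfies $\rho^{\text{T}} z = 0$, so no strict feasibility is available. The resolution I would use is the refined constraint qualification for convex programs whose inequality constraints are all affine: here both $\rho^{\text{T}} z - y \leq 0$ and $-z \leq 0$ are affine and the objective $\|\Tilde{\xi_i} - z\|_2$ is finite on all of $\mathbb{R}^{|\mathcal{K}|}$, so strong duality holds as soon as the primal is feasible, with no strict feasibility required. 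Feasibility holds in the regime of interest, since $\rho \in \mathcal{Q}_\alpha(\gamma)$ forces $\rho \geq 0$, and for $y \geq 0$ the point $z=0$ is feasible; the primal infimum is then finite and attained, being a projection onto a nonempty closed convex set. I would close by noting that the attained primal minimizer together with the complementary-slackness (KKT) conditions certifies that the optimal dual value coincides with the primal value, which is the asserted strong duality.
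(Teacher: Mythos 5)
Your proof is correct, and the core computation coincides with the paper's: both reduce the dual function to the conjugate of the Euclidean norm via the change of variable $w=\Tilde{\xi_i}-z$, yielding the feasibility condition $\|\zeta_i-\lambda_i\rho\|_2\le 1$ and the objective $\lambda_i(\rho^{\text{T}}\Tilde{\xi_i}-y)-\zeta_i^{\text{T}}\Tilde{\xi_i}$. You diverge in two ways. First, the paper introduces an epigraph variable $t$ to cast \eqref{dual non wassertein} as an SOCP, which forces an extra multiplier $\beta$ for the cone constraint and a separate argument (via unboundedness of $t(1-\beta)$ and a contradiction with the primal value) to pin down $\beta=1$; you dualize the norm objective directly and avoid that detour entirely. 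Second, and more substantively, the paper justifies strong duality by appealing to "strong duality of a primal-dual pair of SOCPs," which in general itself requires a constraint qualification that is not verified (and, as you observe, an interior point of $\{z\ge 0,\ \rho^{\text{T}}z\le y\}$ may not exist, e.g.\ when $y=0$); your invocation of the refined Slater condition for convex programs whose inequality constraints are all affine --- under which mere primal feasibility suffices because the objective is finite-valued on all of $\mathbb{R}^{|\mathcal{K}|}$ --- closes exactly this gap and is the cleaner justification. The only caveat is your feasibility remark: if $y<0$ the feasible set is empty and the primal value is $+\infty$, but then the dual is also unbounded above (take $\zeta_i=\lambda_i\rho$ and $\lambda_i\to\infty$), so the asserted duality still holds in that degenerate regime; it would be worth stating this explicitly rather than restricting to $y\ge 0$.
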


The proof is given in Appendix \ref{app_E}.

\begin{theorem}\label{theorem nonnegative wasserstein}
Consider the \emph{DRCCMDP}  problem \eqref{DRCCMDP-opt}. We assume that the distribution of $\hat{R}$ belongs to the uncertainty set defined by \eqref{uncertainty Wasserstein set} and $\varphi=\mathbb{R}^{|\mathcal{K}|}_+$. Then, the \emph{DRCCMDP} \eqref{DRCCMDP-opt} can be reformulated equivalently as the following biconvex optimization problem
\begin{align}\label{nonnegative wasserstein continuous optimization}
    &\max \quad y \nonumber\\
    \mbox{s.t.} \,\,\,\,\,
    & (\emph{i}) \quad \theta_W - \frac{1}{H}\sum_{i=1}^H g_i \leq l \epsilon,\nonumber\\
    & (\emph{ii}) \quad \lambda_i(\rho^{\emph{T}}\Tilde{\xi_i}-y) - \zeta_i^{\emph{T}}\Tilde{\xi_i} \geq l + g_i, \ \forall \ i = 1,\ldots,H,\nonumber\\
    & (\emph{iii}) \quad ||\zeta_i-\lambda_i \rho||_2 \leq 1, \ \forall \ i = 1,\ldots,H,\nonumber\\
    & (\emph{iv}) \quad \lambda_i \geq 0, \zeta_i \in \mathbb{R}_+^{|\mathcal{K}|}, \ l > 0, \ g_i \leq 0, \ \rho\in\mathcal{Q}_{\alpha}(\gamma), \ \forall \ i = 1,\ldots,H.
\end{align}
\end{theorem}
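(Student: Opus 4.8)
The plan is to obtain the reformulation in two moves: first strip out the randomness using the Wasserstein duality already in hand, and then dualize the remaining inner infimum to expose explicit convex constraints. First I would invoke Lemma \ref{reformulation wassertein ok}. Its proof only uses that $\nu$ is the discrete empirical distribution with scenarios $\Tilde{\xi_i}\in\varphi$ and that $\inf_{z\in\varphi}\lambda\|\Tilde{\xi_i}-z\|_2=0$, none of which depends on the shape of $\varphi$; hence it applies verbatim to the nonnegative orthant $\varphi=\mathbb{R}_+^{|\mathcal{K}|}$. This reduces the \emph{DRCCMDP} \eqref{DRCCMDP-opt} to the deterministic problem \eqref{reformulation Wasserstein p=1 data-driven}, whose only non-elementary ingredient is the constraint $\inf_{z\in\varphi,\ \rho^{\text{T}}z\le y}\|\Tilde{\xi_i}-z\|_2\ge l+g_i$ for each $i=1,\ldots,H$.

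Next I would replace each such infimum by its dual. By Lemma \ref{strong duality nonnegative wasserstein}, for $\varphi=\mathbb{R}_+^{|\mathcal{K}|}$ strong duality gives $\inf_{z\in\varphi,\ \rho^{\text{T}}z\le y}\|\Tilde{\xi_i}-z\|_2=\max\{\lambda_i(\rho^{\text{T}}\Tilde{\xi_i}-y)-\zeta_i^{\text{T}}\Tilde{\xi_i}:\|\zeta_i-\lambda_i\rho\|_2\le 1,\ \zeta_i\in\mathbb{R}_+^{|\mathcal{K}|},\ \lambda_i\ge 0\}$. Since this dual maximum is attained, the inequality $\inf(\cdots)\ge l+g_i$ holds if and only if there exist feasible multipliers $(\lambda_i,\zeta_i)$ with $\lambda_i(\rho^{\text{T}}\Tilde{\xi_i}-y)-\zeta_i^{\text{T}}\Tilde{\xi_i}\ge l+g_i$. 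Because these multipliers occur in no other constraint and not in the objective $\sup y$, I would promote them to decision variables of the outer program: a tuple $(y,\rho,l,(g_i))$ is feasible for \eqref{reformulation Wasserstein p=1 data-driven} exactly when it can be augmented by such $(\lambda_i,\zeta_i)_{i=1}^H$. Collecting constraint (i) of \eqref{reformulation Wasserstein p=1 data-driven} with the dual-feasibility and dual-value inequalities then produces precisely \eqref{nonnegative wasserstein continuous optimization}, with the same optimal value.

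Finally I would certify the biconvex label. The only nonconvexities in \eqref{nonnegative wasserstein continuous optimization} are the bilinear products $\lambda_i\rho$ (entering constraint (iii) and the term $\lambda_i\rho^{\text{T}}\Tilde{\xi_i}$) and $\lambda_i y$ in constraint (ii). Splitting the variables into the block $(\lambda_1,\ldots,\lambda_H)$ and the block $(y,\rho,(\zeta_i),l,(g_i))$, I would observe that fixing the $\lambda_i$ turns (ii) into a linear inequality and (iii) into a genuine second-order cone constraint, leaving an \emph{SOCP} over the polyhedron $\mathcal{Q}_{\alpha}(\gamma)$; conversely, fixing the second block leaves each $\lambda_i$ entering (ii) linearly and (iii) as the norm of an affine map of $\lambda_i$, hence convex. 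This is the defining biconvex structure. I expect the one genuinely delicate point to be the attainment argument in the existential step, namely that the dual optimum of Lemma \ref{strong duality nonnegative wasserstein} is achieved rather than merely approached, since that is exactly what upgrades $\inf(\cdots)\ge l+g_i$ into the \emph{equivalent} existence of multipliers meeting the bound; the remainder is routine bookkeeping of the constraints.
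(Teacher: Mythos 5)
Your proposal follows essentially the same route as the paper, which disposes of this theorem in a single line by citing Lemma \ref{reformulation wassertein ok} (to reduce the \emph{DRCCMDP} to \eqref{reformulation Wasserstein p=1 data-driven}) and Lemma \ref{strong duality nonnegative wasserstein} (to dualize the inner infimum) --- exactly the two-step reduction you carry out. Your additional care about dual attainment and the verification of the biconvex structure only make explicit what the paper leaves implicit.
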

The proof follows directly from Lemma \ref{reformulation wassertein ok} and Lemma \ref{strong duality nonnegative wasserstein}.
\begin{remark}
The optimization problem \eqref{nonnegative wasserstein continuous optimization} is a non-convex reformulation with biconvex terms. It can be solved by DMCP solver in CVXPY or nonlinear nonconvex optimization solvers, e.g., IPOPT without any guarantee of running time.
\end{remark}

\section{Machine replacement problem} \label{machine_rep}
\VS{In this section, we consider a machine replacement problem  where a machine in a factory has a life-time of $N$ years. At every stage a maintenance of the machine is scheduled but a factory owner can decide  whether to repair or  do not repair the machine. There is a high probability that the machine behaves like a new one if it is being repaired and its life gets reduced by a year if it is not being repaired.
The factory owner incurs maintenance cost if he decides to repair the machine. It can be modelled as an MDP problem where the life of a machine represents the state of underlying Markov chain, i.e., there are $N+1$ states.
The first state represents a brand new machine. At each state there are two actions: i) "repair", ii) "do not repair". The maintenance cost corresponding to every state-action pair is not exactly known and is realised after the decision is made. Therefore, it is modelled with a random variable.  We assume that for every state action pair $(s,a)$, the maintenance cost is defined as $\hat{c}(s,a)= K + \hat{Z}(s,a)$, where $K$ represents the fixed cost and $\hat{Z}(s,a)$ represents a variable cost which is  a random variable. The machine generates \AL{a} revenue  
$L(s,a)$ at state-action pair $(s,a)$ and the profit for each  $(s,a)\in \mathcal{K}$ is given by 
\begin{align}\label{running reward formula}
\hat{R}(s,a) = L(s,a)-K-\hat{Z}(s,a).    
\end{align}
The factory owner is interested in maximizing the expected discounted profit. We assume that the factory owner has a finite number of the same machines which are modelled using the same Markov chain. Therefore, we compute the optimal repair policy  with respect to a single machine and the same repair policy can be applied for all other machines.}

All the numerical results below are performed using Python 3.8.8 on an Intel Core i5-1135G7, Processor 2.4 GHz (8M Cache, up to 4.2 GHz), RAM 16G, 512G SSD. \VS{We compare the performance of DRCCMDP for each uncertainty set with 
the CCMDP model  \eqref{CCMDP-equi} where the distribution of $\hat{R}$ is assumed to be a normal distribution. 
 In our numerical experiments, we \AL{set the} number of states to 10, the threshold value $\epsilon = 0.1$, the discount parameter $\alpha=0.85$ and the initial distribution of states $\gamma$ to be uniformly distributed. For the above instance, $|\mathcal{K}|=20$ and $\hat{R}$ is a $20\times 1$ random vector with mean vector $\mu$ given by
 \begin{align}\label{mean}
 \mu(s,a)=L(s,a)-K-\mu_{\hat{Z}}(s,a),
 \end{align}
 where $\mu_{\hat{Z}}$ is the mean vector of the random cost vector  $\hat{Z}$.} We take $K=10$, \AL{the functio} \NGU{L} and the mean cost $\mu_{\hat{Z}}$ corresponding to each state-action pair are summarized in Table \ref{1}. 
 For example, at state $1$, if the "repair" action is taken, the factory owner has to pay a random cost with mean $\mu_{\hat{Z}}(1,1) = 10$. If the action  "do not repair" is taken, the mean value of the random cost is $\mu_{\hat{Z}}(1,2) = 0$. \VS{The last state is considered to be risky and not repairing may lead to the machine breakdown. This is the reason we take \AL{the} mean cost equal to 5 if "do not repair" action is taken at state 10.}
 The covariance matrix $\Sigma$ of $\hat{R}$ is randomly generated using the following formula
 \begin{align}\label{cov}
 \Sigma=\frac{AA^{\text{T}}}{20} + D_{20},
 \end{align}
 where $A$ is a $20 \times 20$ random matrix whose all the entries are real numbers belonging to $[0,1]$ generated by the command "\textit{A=numpy.random.random(size=(20, 20))}", $D_{20}$ is a $20 \times 20$ diagonal  matrix with $D_{20}(10,10) = 4$, $D_{20}(20,20) = 9$, $D_{20}(i,i)=1$, for every $i \neq 10,20$ and all other entries equal to zero. \VS{For the above instance, $\Sigma$ is diagonally dominant with high values at entries $(10,10)$ and $(20,20)$ which is due to the fact that action at risky state can have large variance corresponding to both actions. We use \AL{the} above $\mu$ and $\Sigma$ for all the moments based uncertainty sets.} 
 \begin{table}[ht]\small 
\begin{minipage}{.6\linewidth}
\centering
\caption{Random cost $\hat{Z}$ and Revenue $L$}\label{1}
\vspace{.2cm}
\scalebox{0.8}{\begin{tabular}{|*{5}{c|}}
\hline
\backslashbox{State(s) \kern-3em}{\kern-1em Action(a)} & \makecell{"Repair"\\$\mu_{\hat{Z}}(s,1)$} & \makecell{"Do not\\repair"\\$\mu_{\hat{Z}}(s,2)$} & \makecell{"Repair"\\$L(s,1)$} & \makecell{"Do not\\repair"\\$L(s,2)$} \\
\hline
1 & 10 &0 & 30 & 30\\ 
\hline
2 & 10.1 &0 & 30 & 29.9\\ 
\hline
3 & 10.2 &0 & 30 & 29.8\\ 
\hline
4 &10.3 &0 & 30 & 29.7\\ 
\hline
5 &10.4 &0 & 30 & 29.6\\  
\hline
6 &10.5 &0 & 30 & 29.5\\ 
\hline
7 &10.6 &0 & 30 & 29.4\\ 
\hline
8 &10.7 &0 & 30 & 29.3\\ 
\hline
9 & 10.8 &0 & 30 & 29.2\\ \hline
10 & 10.9 &5 & 30 & 29.1\\
\hline
\end{tabular}}
\end{minipage}
\hfill
\begin{minipage}{.35\linewidth}
\caption{Other parameters}\label{2}
\centering
\vspace{.2cm}
\scalebox{0.791}
{\begin{tabular}{|*{4}{c|}}
\hline
\makecell{Known mean\\ unknown covariance}& \makecell{$\delta_0=0.9$}\\
\hline
\makecell{Unknown mean\\ unknown covariance} & \makecell{$\delta_1=\delta_2=1$}\\
\hline
\makecell{$\phi-$divergence}&\makecell{$\theta_\phi=0.01$}\\
\hline
\makecell{Wasserstein distance}&\makecell{$\theta_W=0.01$\\$H=1000$}\\
\hline
\end{tabular}}
\end{minipage}
\end{table}
 \VS{For $\phi$-divergence based uncertainty set, we take the nominal distribution $\nu$ as a normal distribution with mean $\mu_{\nu}=\mu$ and covariance matrix $\Sigma_{\nu}=\Sigma$  where $\mu$ and $\Sigma$ are defined by \eqref{mean} and \eqref{cov}, respectively. 
 For Wasserstein distance based uncertainty set, 
  we take the number of observations  $H=1000$. The scenarios $(\Tilde{\xi}_i)_{i=1}^H$ are randomly generated by the reference distribution $\nu$. We generate a standard Gaussian vector by the command "\textit{x=numpy.random.normal(0,1,20)}". Using vector $x$, we generate a Gaussian vector with $\mu_{\nu}$ and  $\Sigma_{\nu}$ by using $\Tilde{\xi}_i = Bx + \mu_{\nu}$, where $\mu_{\nu}$ and $\Sigma_{\nu}$ are the mean vector and the covariance matrix defined by \eqref{mean} and \eqref{cov}, respectively, and 
  $B$ is the Cholesky factorization of $\Sigma_{\nu}$. To get the Cholesky factorization of a matrix, we use the command "\textit{numpy.linalg.cholesky}". We summarize \AL{the} other parameters related to all the uncertainty sets in Table \ref{2}.
}

\begin{table}[ht]
\centering
\caption{Optimal policies of \emph{CCMDP} 
and \emph{DRCCMDP} with full and nonnegative supports}\label{table 0 of optimal policies}
\centering
\vspace{.1cm}
\scalebox{0.6}{\begin{tabular}{|*{11}{c|}}
\hline
\backslashbox{State(s) \kern-3em}{\kern-1em Optimal \\ policies} & \makecell{CCMDP \\Gaussian\\(p,1-p)} & \makecell{Full support\\known mean\\ known covariance\\(p,1-p)} & \makecell{Full support \\ known mean\\ unknown covariance\\(p,1-p)}& \makecell{Full support \\ unknown mean\\ unknown covariance\\(p,1-p)}& \makecell{$\phi-$divergence\\ (Modified $\chi^2$)\\(p,1-p)}&\makecell{$\phi-$divergence\\(variation)\\(p,1-p)} \\
\hline
1&(0,\,\,1) &(0,\,\,1) &(0,\,\,1) & (0,\,\,1) & (0,\,\,1)&(0,\,\,1)\\ 
\hline
2&(0,\,\,1) &(0,\,\,1) &(0,\,\,1) & (0,\,\,1)&(0,\,\,1)& (0,\,\,1)\\ 
\hline
3&(0,\,\,1)&(0,\,\,1) & (0,\,\,1) & (0,\,\,1) &(0,\,\,1)& (0,\,\,1)\\ 
\hline
4&(0,\,\,1)&(0,\,\,1) & (0,\,\,1) &(0,\,\,1) & (0,\,\,1) & (0,\,\,1)\\ 
\hline
5&(0,\,\,1)&(0,\,\,1) & (0,\,\,1)&(0,\,\,1) & (0,\,\,1) & (0,\,\,1)\\  
\hline
6&(0,\,\,1)& (0,\,\,1) &(0,\,\,1) & (0,\,\,1) & (0,\,\,1) & (0,\,\,1)\\ 
\hline
7&(0,\,\,1)&(0,\,\,1)& (0,\,\,1) & (0,\,\,1) & (0,\,\,1) & (0,\,\,1)\\ 
\hline
8&(0,\,\,1)& (0,\,\,1) &(0,\,\,1)& (0,\,\,1) & (0,\,\,1) & (0,\,\,1)\\ 
\hline
9 &(0,\,\,1)& (0.64,\,\,0.36) & (0.64,\,\,0.36)&
(0.6,\,\,0.4) & (0.27,\,\,0.73)& (0.05,\,\,0.95)\\ \hline
10& (0.9,\,\,0.1) & (0.91,\,\,0.09) &(0.91,\,\,0.09) &(0.91,\,\,0.09)& (0.9,\,\,0.1)& (0.9,\,\,0.1)\\
\hline
\end{tabular}}
\end{table}
\begin{table}[ht]
\centering
\caption{Optimal policies of \emph{CCMDP} 
and \emph{DRCCMDP} with full and nonnegative supports (continued)}\label{table 1 of optimal policies}
\vspace{.1cm}
\centering
\scalebox{0.6}{\begin{tabular}{|*{11}{c|}}
\hline
\makecell{$\phi-$divergence\\(Kullbach-Leibler)\\(p,1-p)} &\makecell{$\phi-$divergence\\(Hellinger )\\(p,1-p)} & \makecell{Full support\\Wasserstein\\(p,1-p)} & \makecell{Nonnegative \\known mean\\ known covariance\\(p,1-p)} & \makecell{Nonnegative \\known mean\\ unknown covariance\\(p,1-p)}& \makecell{Nonnegative \\unknown mean\\ unknown covariance\\(p,1-p)}& \makecell{Nonnegative \\Wasserstein\\(p,1-p)}\\
\hline
(0,\,\,1)& (0,\,\,1)& (0,\,\,1) & (0,\,\,1) &(0,\,\,1) & (0,\,\,1)& (0,\,\,1)\\ 
\hline
(0,\,\,1)& (0,\,\,1)& (0,\,\,1) &(0,\,\,1) &(0,\,\,1) & (0,\,\,1)& (0,\,\,1)\\ 
\hline
(0,\,\,1)& (0,\,\,1)& (0,\,\,1) &(0,\,\,1) & (0,\,\,1) & (0,\,\,1) & (0,\,\,1)\\ 
\hline
(0,\,\,1)& (0,\,\,1)& (0,\,\,1) &(0,\,\,1) & (0,\,\,1) & (0,\,\,1) & (0,\,\,1)\\ 
\hline
(0,\,\,1)& (0,\,\,1)& (0,\,\,1)  &(0,\,\,1) & (0,\,\,1) & (0,\,\,1) & (0,\,\,1)\\ 
\hline
(0,\,\,1)& (0,\,\,1)& (0,\,\,1) &(0,\,\,1) & (0,\,\,1) & (0,\,\,1) & (0,\,\,1)\\ 
\hline
(0,\,\,1)& (0,\,\,1)& (0,\,\,1) &(0,\,\,1) & (0,\,\,1) & (0,\,\,1) & (0,\,\,1)\\  
\hline
(0,\,\,1)& (0,\,\,1)& (0,\,\,1)  &(0,\,\,1) & (0,\,\,1) & (0,\,\,1) & (0,\,\,1)\\ 
\hline
(0.25,\,\,0.75)& (0.28,\,\,0.72)& (0.02,\,\,0.98) & (0.62,\,\,0.38) & (0.62,\,\,0.38)&
(0.59,\,\,0.41) & (0.01,\,\,0.99)\\ \hline
(0.9,\,\,0.1)& (0.9,\,\,0.1)& (0.9,\,\,0.1) & (0.91,\,\,0.09) &(0.91,\,\,0.09) &(0.91,\,\,0.09)& (0.9,\,\,0.1)\\ 
\hline
\end{tabular}}
\end{table}

\VS{We compute an optimal policy of the CCMDP problem \eqref{CCMDP-equi}, where $\hat{R}$ follows a normal distribution with mean vector and covariance matrix defined by 
\eqref{mean} and \eqref{cov}, by solving an equivalent SOCP problem \cite{delage2010percentile}. The optimal policies of the DRCCMDP problem for all the uncertainty sets are computed by solving  \AL{the} proposed equivalent optimization problems.
We present the optimal policies of CCMDP and  DRCCMDP with full support and nonnegative support in Tables \ref{table 0 of optimal policies} and \ref{table 1 of optimal policies}, where $p$ is the probability of "repair" action and $1-p$ is the probability of "do not repair" action.  It is clear from Tables \ref{table 0 of optimal policies} and \ref{table 1 of optimal policies} that the optimal repair policy corresponding to all the uncertainty sets for first eight states is same. At state 9 \AL{the} probability of repair is \AL{greater} than the probability of do not repair for moments based uncertainty sets whereas for statistical distance based uncertainty sets the probability of repair is less than the probability of do not repair. This shows that \AL{the} statistical distance based uncertainty sets give better optimal policy as compared to moments based uncertainty sets. At \AL{the} last state, \AL{the} optimal policy is to choose repair action with a very high probability for all the uncertainty sets.}

 We present the time analysis by considering the number of states for all uncertainty sets between 1000 and 10000. All \AL{the} parameters are taken similar to the case of 10 states. The results are presented in Figure \ref{CPU time} which shows that the CPU \AL{time is almost always the same }  to solve \NGU{SOCP \eqref{SOCP known known real full support} with $\kappa=\sqrt{\frac{1-\epsilon}{\epsilon}}$} and the MISOCP \eqref{reformulation Wasserstein p=1 data-driven big M-method} while \AL{additional CPU time is required} to solve the \AL{SDP relaxations of the } \NGU{copositive optimization problem \eqref{matrix form unbounded nonegative polytopic}} and the biconvex optimization problem \eqref{nonnegative wasserstein continuous optimization}.
\begin{figure}[ht]\label{CPU time}
\centering
\includegraphics[width=10cm]{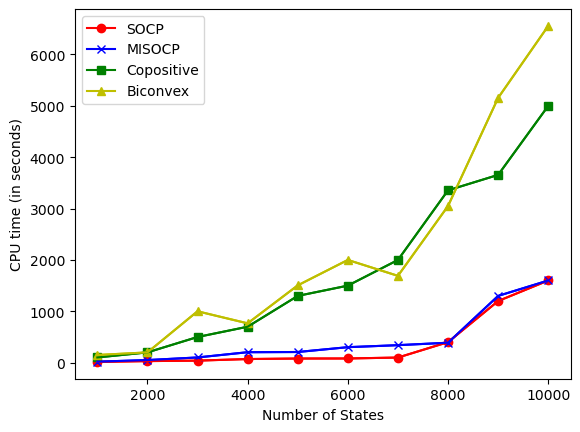}
\caption{CPU time (in seconds) to solve SOCP \eqref{SOCP known known real full support} with $\kappa=\sqrt{\frac{1-\epsilon}{\epsilon}}$, MISOCP \eqref{reformulation Wasserstein p=1 data-driven big M-method}, copositive optimization problem \eqref{matrix form unbounded nonegative polytopic} and biconvex optimization problem \eqref{nonnegative wasserstein continuous optimization} with different number of states.}
\end{figure}
\section{Conclusions}\label{conclusion}
\VS{
We study a DRCCMDP problem under various moments and statistical distance based uncertainty sets defined using $\phi$-divergence and Wasserstein distance metric. We propose equivalent SOCP, MISOCP, copositive \NGUNEW{optimization problem} and biconvex optimization problem, depending on the choice of the uncertainty set, for the DRCCMDP problem.
All these optimization problems except biconvex optimization problems and \NGUNEW{copositive optimization problems} can be solved efficiently using known optimization solvers. We perform numerical experiments, using the optimization solvers in python, on a machine replacement problem using randomly generated data. The numerical experiments are performed on the DRCCMDP problem up to 10000 states and it is very clear from our time analysis that these problems can be solved very efficiently.  
}

\section*{Acknowledgement}  This research was supported by DST/CEFIPRA Project No.
IFC/4117/DST-CNRS-5th call/2017-18/2 and
CNRS Project No. AR/SB:2018-07-440.



\appendix
\section{Proof of Lemma \ref{lemma strong duality nonnegative polytopic}}\label{Appendix Lemma non negative polytopic}

\hfill

Consider the optimization problem
\begin{align} \label{primal problem known known}
    & v_{\text{P}}(\mu,\Sigma) = \sup_{F \in \mathcal{C}^+} \int_{\varphi} \mathbf{1}_{\left\{\rho^{\text{T}} \hat{R} \leq y\right\}}\text{d}F(\hat{R}) \,\,\, \nonumber\\
    \mbox{s.t..}\,\,\, & (\text{i}) \quad  \int_{\varphi} \text{d}F(\hat{R}) = 1, \nonumber\\
    & (\text{ii})\quad \int_{\varphi} (\hat{R}-\mu)(\hat{R}-\mu)^{\text{T}} \text{d}F(\hat{R}) = \Sigma, \nonumber \\
    & (\text{iii})\quad \int_{\varphi} \hat{R} \text{d}F(\hat{R}) = \mu,
\end{align}
where $\mathcal{C}^+$ is the set of all positive measures on $\mathbb{R}_{+}^{|\mathcal{K}|}$. The dual problem of \eqref{primal problem known known} is given by
\begin{align} \label{dual problem known known}
    & v_{\text{D}}(\mu,\Sigma) = \inf\quad -t-Q \circ \Sigma - q^{\text{T}} \mu \,\,\, \nonumber\\
    \mbox{s.t..}\,\,\, & (\text{i}) ~ \mathbf{1}_{\left\{\rho^{\text{T}} \xi \leq y\right\}} + q^{\text{T}} \xi + \xi^{\text{T}} Q \xi - 2\xi^{\text{T}} Q \mu + \mu^{\text{T}} Q \mu + t \leq 0,\  \forall\ \xi \in \mathbb{R}_{+}^{|\mathcal{K}|},\nonumber\\
    & (\text{ii})   ~  Q\in \mathcal{S}^{|\mathcal{K}|},
\end{align}
\VS{where $t$, $q$, and $Q$ are the dual variables
associated with the constraints $(\text{i})$, $(\text{ii})$ and $(\text{iii})$ of \eqref{primal problem known known}, respectively.}
In Theorem 3.4 of \cite{cheng2014distributionally}, under the assumption $\mu \in \text{RI}(\varphi)$, the authors show that the Dirac distribution $\delta_\mu$ lies in the relative interior of the distributional \VS{uncertainty} set which implies that the weaker condition of Proposition 3.4 of \cite{shapiro2001duality} holds. However, it is not trivial to find a strictly feasible point inside our distributional \VS{uncertainty} set. \VS{Let $(t^*_j, Q^*_j, q^*_j)_{j \in \mathbb{N}}$ be a sequence of feasible solutions of \eqref{dual problem known known} such that }
\begin{align}\label{limit vd}
    -t^*_j-Q^*_j \circ \Sigma - q_j^{*\text{T}} \mu \rightarrow v_{\text{D}}(\mu,\Sigma), \ \text{as} \ j \rightarrow \infty.
\end{align}
For each $j \in \mathbb{N}$, let $r^*_j=\max(0,q^*_j) - q^*_j$, where $\max(0,q^*_j)$ denotes a $|\mathcal{K}|$-dimensional vector with $i^{\text{th}}$ component equal to the maximum value between $0$ and the $i^{\text{th}}$ component of $q^*_j$, for every $i = 1,\ldots,|\mathcal{K}|$. Let $\epsilon_j$ be a strictly positive decreasing sequence such that $\epsilon_j r^*_j\rightarrow 0$ componentwise and $\epsilon_j\rightarrow 0$, when $j\rightarrow \infty$. Let $x_j = \epsilon_j \NGUNEW{\mathbf{1}}$, where $\NGUNEW{\mathbf{1}}$ denotes the vector with all components equal to $1$. Then, 
 $   r_j^{*\text{T}} x_j \rightarrow 0$ as $j \rightarrow \infty$.
For each $j \in \mathbb{N}$, consider the following conic optimization problem
\begin{align} \label{alter primal problem known known}
    & v^j_{\text{P}}(\mu,\Sigma) = \sup_{F \in \mathcal{C}^+} \int_{\varphi} \mathbf{1}_{\left\{\rho^{\text{T}} R \leq y\right\}}\text{d}F(R) \,\,\, \nonumber \\
    \mbox{s.t.}\,\,\, & (\text{i}) \quad \int_{\varphi} \text{d}F(R) = 1, \nonumber\\
    & (\text{ii})\quad \int_{\varphi} (R-\mu)(R-\mu)^{\text{T}} \text{d}F(R) = \Sigma, \nonumber \\
    & (\text{iii})\quad \mu \leq \int_{\varphi} R \text{d}F(R) \leq \mu+x_j.
\end{align}
The dual problem of \eqref{alter primal problem known known} is given by
\begin{align} \label{alter dual problem known known}
    & v^j_{\text{D}}(\mu,\Sigma) = \inf\quad -t-Q \circ \Sigma + (r-h)^{\text{T}} \mu + r^T x_j \,\,\, \nonumber\\
    \mbox{s.t.}\,\,\, & (\text{i}) \quad  \mathbf{1}_{\left\{\rho^{\text{T}} \xi \leq y\right\}} + (h-r)^{\text{T}} \xi + \xi^{\text{T}} Q \xi - 2\xi^{\text{T}} Q \mu + \mu^{\text{T}} Q \mu + t \leq 0,\ \forall \ \xi \in \mathbb{R}_{+}^{|\mathcal{K}|},\nonumber\\
    & (\text{ii}) \quad  h, r \in \mathbb{R}_+^{|\mathcal{K}|}, Q\in \mathcal{S}^{|\mathcal{K}|},
\end{align}
\VS{where $t$, $Q$, $r$ and $h$ are the dual variables of the constraint $(\text{i})$, $(\text{ii})$ and $(\text{iii})$ of \eqref{alter primal problem known known}, respectively. The vector $(t,Q,h,r)$ such that 
 $t=t_j^*, Q=Q_j^*$, $h=\max(0,q_j^*)$, $r=r_j^*$   is a feasible solution of \eqref{alter dual problem known known}.} Hence, 
\begin{align}\label{left side condition}
    &v^j_{\text{D}}(\mu,\Sigma) \leq -t^*_j-Q^*_j \circ \Sigma - q_j^{*\text{T}} \mu + r_j^{*\text{T}} x_j, \ \forall \ j\in \mathbb{N}.
\end{align}
Since the feasibility set of \eqref{definition vP vP} is non-empty, there exists a nonnegative distribution $F^*$ such that $\mathbb{E}(F^*)=\mu$ and $ \text{Cov}(F^*)=\Sigma$. Let $F_j$ be a distribution with support

\noindent$\varphi_j:= \left\{\xi\,\|\,\,\xi \in \mathbb{R}_+^\mathcal{K}, \xi \geq \frac{x_j}{2},\ \mbox{componentwise}\right\}$,  defined by
\begin{align*}
    F^*(\xi) = F_j(\xi+\frac{x_j}{2}), \ \forall\ \xi \in \mathbb{R}_+^\mathcal{K}.
\end{align*}
It is clear that $F_j$ is a feasible \VS{solution} of \eqref{alter primal problem known known} and $\varphi_j \subset \text{RI}(\varphi)$. Hence, $F_j$ belongs to the relative interior of the distributional uncertainty set \VS{which implies that strong duality holds, i.e., $v_\text{P}^j(\mu,\Sigma) = v_\text{D}^j(\mu,\Sigma)$ for all $j\in \mathbb{N}$. Since the objective function of \eqref{alter primal problem known known} is a continuous function of $F$ and $x_j\rightarrow 0$ as $j\rightarrow \infty$, then $v_\text{P}^j(\mu,\Sigma)\rightarrow v_\text{P}(\mu,\Sigma)$ as $j\rightarrow \infty$. Therefore, it is  sufficient to prove that $v_\text{D}^j(\mu,\Sigma) \rightarrow v_\text{D}(\mu,\Sigma)$ as $j\rightarrow \infty$. }
\VS{
It is clear that the feasible  sets of \eqref{alter dual problem known known} and \eqref{dual problem known known} are equivalent and objective function of \eqref{alter dual problem known known} has additional positive term. Therefore,}
\begin{align}\label{right side condition}
v_\text{D}^j(\mu,\Sigma) \geq v_\text{D}(\mu,\Sigma), \ \forall \ j\in \mathbb{N}.
\end{align}
\VS{Using \eqref{limit vd},  \eqref{left side condition} and \eqref{right side condition} and the fact that $r_j^{*T}x_j \rightarrow 0$ as $j\rightarrow \infty$, we have
$v_\text{D}^j(\mu,\Sigma) \rightarrow v_\text{D}(\mu,\Sigma)$ as $j\rightarrow \infty$.}
\section{ Proof of Theorem \ref{theorem phi divergencess} - Case Hellinger distance}\label{app_Hellinger}

From Table \ref{table of phi divergences}, the conjugate of $\phi$ has the following form
\begin{equation}\label{form of Hellinger conjugate}
    \phi^*(r)=
    \begin{cases}
        & \frac{r}{1-r},\quad \mbox{if}\ r<1,\\
        &\infty,\quad \mbox{if}\ r \ge 1.
    \end{cases}
\end{equation}
Let
\begin{align}\label{Lagrangian dual}
    &L=\sup_{\lambda > 0, \beta \in \mathbb{R}}\left\{\beta-\lambda \theta_\phi - \lambda \phi^*\left(\frac{-1+\beta}{\lambda}\right)\mathbb{P}_{\nu}(O)- \lambda \phi^*\left(\frac{\beta}{\lambda}\right)(1-\mathbb{P}_{\nu}(O))\right\}.
\end{align}
The constraint $(\text{i})$ of \eqref{DRCCMDP-opt-ex} is equivalent to
\begin{align}\label{constraint Hellinger}
    L\geq 1-\epsilon.
\end{align}
We consider two cases as follows:\\
\noindent    \textbf{Case 1:} Let $\frac{\beta}{\lambda}<1$. 
    Since $\lambda>0$, the following inequality holds
    \begin{align*}
        \frac{\beta-1}{\lambda} < \frac{\beta}{\lambda} < 1.
    \end{align*}
    From \eqref{form of Hellinger conjugate}, we have 
    \[
      \phi^*\left(\frac{\beta}{\lambda}\right) =  \frac{\NGU{\beta}}{\lambda-\beta}, \  \phi^*\left(\frac{\beta-1}{\lambda}\right)=\frac{\NGU{\beta-1}}{\lambda+1-\beta}.
    \]
    \VS{Consequently, it follows from \eqref{Lagrangian dual} 
    that 
    \[
    L=\sup_{\lambda > 0,  \beta<\lambda}\left\{\mathbb{P}_{\nu}(O)\frac{\lambda^2}{(\lambda-\beta)(\lambda-\beta+1)}-\frac{\beta^2}{\lambda-\beta} - \lambda \theta_\phi\right\}.
    \]
    Let $\eta=\lambda-\beta$. Then, we can write
    \[
    L=\sup_{\lambda > 0, \eta>0}\left\{\lambda^2\left(\frac{\mathbb{P}_{\nu}(O)}{\eta(\eta+1)}-\frac{1}{\eta}\right)+\lambda(2-\theta_\phi)-\eta\right\}.
    \]
    Let $g(\lambda,\eta)=\lambda^2\left(\frac{\mathbb{P}_{\nu}(O)}{\eta(\eta+1)}-\frac{1}{\eta}\right)+\lambda(2-\theta_\phi)-\eta$. 
     It is a second-order polynomial of $\lambda$ and the coefficient of $\lambda^2$ is negative because $0\leq\mathbb{P}_{\nu}(O)\leq 1$ and $\eta>0$. It is well known that the maximum value of a second order polynomial $f(x)=ax^2+bx+c$ with \NGU{$a<0$} is $c-\frac{b^2}{4a}$ and it holds at $x=\frac{-b}{2a}$. Hence, the maximum value of $g(\lambda,\eta)$ holds at $\lambda^*=\frac{\eta(\eta+1)(2-\theta_\phi)}{2(1+\eta-\mathbb{P}_{\nu}(O))}$.
     Since $\theta_\phi<2$,  $\lambda^* > 0$. Therefore, for a given $\eta>0$, the optimal value $L$ holds at $\lambda^*$ and $L = c-\frac{b^2}{4a}$, where $c=-\eta$, $b=2-\theta_{\phi}$, $a=\frac{\mathbb{P}_{\nu}(O)}{\eta(\eta+1)}-\frac{1}{\eta}$, which implies that
    \begin{align}\label{mnlb}
    L&= \sup_{\eta>0}\left\{-\eta+\frac{(2-\theta_\phi)^2\eta(\eta+1)}{4(\eta+1-\mathbb{P}_{\nu}(O))}\right\}.
    \end{align}
    Let $u= \eta+1-\mathbb{P}_{\nu}(O)$, then $\eta>0$ is equivalent to $u > 1-\mathbb{P}_{\nu}(O)$ and we can write 
    \begin{align*}
       L &=\sup_{u > 1-\mathbb{P}_{\nu}(O)}\Bigg\{\left(\frac{(2-\theta_\phi)^2}{4}-1\right)u+\frac{(2-\theta_\phi)^2\mathbb{P}_{\nu}(O)(\mathbb{P}_{\nu}(O)-1)}{4}\frac{1}{u}\\
        &+ 1 - \mathbb{P}_{\nu}(O) + \frac{(2-\theta_\phi)^2(2\mathbb{P}_{\nu}(O)-1)}{4}\Bigg\},\\
        &= \sup_{u > 1-\mathbb{P}_{\nu}(O)}\NGU{G(u)},
    \end{align*}
    where $G(u)=a_1u+\frac{b_1}{u}+c_1$ such that
     \begin{align*}
     &a_1=\frac{(2-\theta_\phi)^2}{4}-1, \ 
b_1=\frac{(2-\theta_\phi)^2\mathbb{P}_{\nu}(O)(\mathbb{P}_{\nu}(O)-1)}{4},\\
     &c_1=1 - \mathbb{P}_{\nu}(O) + \frac{(2-\theta_\phi)^2(2\mathbb{P}_{\nu}(O)-1)}{4}.
     \end{align*}
     }
     Since $0<\theta_{\phi}<2$ and $0 \leq \mathbb{P}_{\nu}(O) \leq 1$,  $a_1<0$ and $b_1\leq 0$. It is clear that $G$ is decreasing on $(u^*,\infty)$, increasing on $(-u^*,u^*)$ and decreasing on $(-\infty,-u^*)$,  where
     \begin{align}\label{formula of gamma}
        &u^* = \sqrt{\frac{b_1}{a_1}}=\sqrt{\frac{(2-\theta_\phi)^2}{4-(2-\theta_\phi)^2}\mathbb{P}_{\nu}(O)(1-\mathbb{P}_{\nu}(O))},\\
        &G(u^*)=a_1u^*+\frac{b_1}{u^*}+c_1=-2\sqrt{a_1b_1}+c_1.\nonumber
    \end{align}
     \NGU{If $u^*\leq 1-\mathbb{P}_{\nu}(O)$, we deduce that $(1-\mathbb{P}_{\nu}(O),\infty) \subset (u^*,\infty)$. Since $G$ is decreasing on $(u^*,\infty)$, it implies that $G$ is decreasing on $(1-\mathbb{P}_{\nu}(O),\infty)$. \VS{Hence, the optimal value of $G$ is attained when $u =
     1-\mathbb{P}_{\nu}(O)$, i.e, $\eta=0$}. From \eqref{mnlb}, $L=0$ which violates the constraint \eqref{constraint Hellinger}. Therefore, $u^* > 1-\mathbb{P}_{\nu}(O) > 0$}. Since, $G$ is decreasing on $(u^*,\infty)$ and increasing on $(1-\mathbb{P}_{\nu}(O),u^*)$, then $u=u^*$ is the optimal solution of $G(u)$ and $L=-2\sqrt{a_1b_1}+c_1$. Therefore, 
     \begin{align*}
     L=&\NGU{-}2\sqrt{\frac{(2-\theta_\phi)^2}{4}\left(1-\frac{(2-\theta_\phi)^2}{4}\right)\mathbb{P}_{\nu}(O)(1-\mathbb{P}_{\nu}(O))}\\
     &+1 - \mathbb{P}_{\nu}(O) + \frac{(2-\theta_\phi)^2(2\mathbb{P}_{\nu}(O)-1)}{4}.
    \end{align*}
    Then, \eqref{constraint Hellinger} is rewritten equivalently as follows
    \begin{align}\label{final formula of vD}
     &\NGU{-}2\sqrt{\frac{(2-\theta_\phi)^2}{4}\left(1-\frac{(2-\theta_\phi)^2}{4}\right)\mathbb{P}_{\nu}(O)(1-\mathbb{P}_{\nu}(O))}\nonumber \\
    &\geq 
    \left(1-\frac{(2-\theta_\phi)^2}{2}\right)\mathbb{P}_{\nu}(O)+\frac{(2-\theta_\phi)^2}{4} - \epsilon.
    \end{align}
    By taking the square on both side of \eqref{final formula of vD},  we get 
    \begin{align}\label{gggg}
     &(2-\theta_\phi)^2\left(1-\frac{(2-\theta_\phi)^2}{4}\right)\mathbb{P}_{\nu}(O)(1-\mathbb{P}_{\nu}(O))\nonumber \\
&    \leq 
    \Bigg[\left(1-\frac{(2-\theta_\phi)^2}{2}\right)\mathbb{P}_{\nu}(O)+\frac{(2-\theta_\phi)^2}{4} - \epsilon\Bigg]^2.
    \end{align}
    \VS{By rewriting \eqref{gggg}, we get the following second-order inequality in $\mathbb{P}_{\nu}(O)$
    \[
    \big(\mathbb{P}_{\nu}(O)\big)^2 + B\ \mathbb{P}_{\nu}(O) + C \geq 0,
    \]
    which is equivalent to 
    \begin{align}\label{3 condition}
     \big(\mathbb{P}_{\nu}(O)-x_{\text{max}}\big)\big(\mathbb{P}_{\nu}(O)-x_{\text{min}}\big) \geq 0,
    \end{align}
    }
    where $x_{\text{max}} = \frac{-B+\sqrt{\Delta}}{2}$, $x_{\text{min}} = \frac{-B-\sqrt{\Delta}}{2}$ and $B,C,\Delta$ are given in Table \ref{table of function f}. It is clear that \eqref{final formula of vD} is equivalent to either $\mathbb{P}_{\nu}(O) \geq x_{\text{max}}$ or $\mathbb{P}_{\nu}(O) \leq x_{\text{min}}$. Moreover, $x_{\text{max}}$ and $x_{\text{min}}$ are  solutions of the following two equalities
    \begin{equation}\label{for xmax}
     \NGU{-}2\sqrt{\frac{(2-\theta_\phi)^2}{4}\left(1-\frac{(2-\theta_\phi)^2}{4}\right)x(1-x)}
    = 
    \left(1-\frac{(2-\theta_\phi)^2}{2}\right)x+\frac{(2-\theta_\phi)^2}{4} - \epsilon,
    \end{equation}
    and
    \begin{equation}\label{oaoa}
     2\sqrt{\frac{(2-\theta_\phi)^2}{4}\left(1-\frac{(2-\theta_\phi)^2}{4}\right)x(1-x)} 
    =
    \left(1-\frac{(2-\theta_\phi)^2}{2}\right)x+\frac{(2-\theta_\phi)^2}{4} - \epsilon.
    \end{equation}
    Since $\theta_\phi< 2-\sqrt{2}$, we deduce that $1-\frac{(2-\theta_\phi)^2}{2}<0$. Therefore, we have
    \begin{equation*}
    \left(1-\frac{(2-\theta_\phi)^2}{2}\right)x_{\text{min}}+\frac{(2-\theta_\phi)^2}{4} - \epsilon
    > \left(1-\frac{(2-\theta_\phi)^2}{2}\right)x_{\text{max}}+\frac{(2-\theta_\phi)^2}{4} - \epsilon,
    \end{equation*}
    which implies that $x_{\text{max}}$ is a solution of \eqref{for xmax} and $x_{\text{min}}$ is a solution of \eqref{oaoa}. Hence, the condition $\mathbb{P}_{\nu}(O) \leq x_{\text{min}}$ implies that
    \begin{equation*}
     \left(1-\frac{(2-\theta_\phi)^2}{2}\right)\mathbb{P}_{\nu}(O)+\frac{(2-\theta_\phi)^2}{4} - \epsilon
     \ge \left(1-\frac{(2-\theta_\phi)^2}{2}\right)x_{\text{min}}+\frac{(2-\theta_\phi)^2}{4} - \epsilon>0,
    \end{equation*}
    which violates the constraint \eqref{final formula of vD}. Then, \eqref{final formula of vD} is equivalent to $\mathbb{P}_{\nu}(O) \geq x_{\text{max}}$, \VS{ i.e., the constraint $(\text{i})$ of \eqref{DRCCMDP-opt-ex} is equivalent to
    \[
    \mathbb{P}_{\nu}(\rho^T\hat{R}\ge y) \ge \frac{-B+\sqrt{\Delta}}{2}.
    \]
    }
     \textbf{Case 2:} \VS{ Let $1 \leq \frac{\beta}{\lambda}$. 
        From \eqref{form of Hellinger conjugate}, 
         $\phi^*\left(\frac{\beta}{\lambda}\right) = \infty$, which in turn implies that       $L=-\infty$ and it violates the constraint \eqref{constraint Hellinger}.}
         
\section{Proof of Lemma \ref{transformation lemma full support}} \label{app_C}

For each $i = 1,\ldots,H$, we consider two cases as follows:

\noindent \textbf{Case 1:} Let $\rho^{\text{T}} \Tilde{\xi_i} \leq y$.
In this case, it is clear that $\inf_{\rho^{\text{T}}z \leq y}||\Tilde{\xi_i}-z||_2=0$ and the optimal value holds at $z=\Tilde{\xi_i}$.

\noindent \textbf{Case 2:} Let $\rho^{\text{T}} \Tilde{\xi_i} > y$.
Geometrically, the term $\inf_{\rho^{\text{T}}z \leq y}||\Tilde{\xi_i}-z||_2$ can be interpreted as the distance between $\Tilde{\xi_i}$ and the hyper plane $\left\{z\,\,|\,\, \rho^{\text{T}} z = y\right\}$.  Assume that the optimal value of $\inf_{\rho^{\text{T}}z \leq y}||\Tilde{\xi_i}-z||_2$ holds at $z=z^*$. If $\rho^{\text{T}} z^* < y$, since $\rho^{\text{T}} \Tilde{\xi_i} > y$, we deduce that there exists $z_0$ on $\text{Seg}(z^*,\Tilde{\xi_i})$ such that $\rho^{\text{T}} z_0 = y$, where $\text{Seg}(z^*,\Tilde{\xi_i}) := \left\{z\,\,|\,\,z = z^* + t(\Tilde{\xi_i}-z^*),\quad 0<t<1\right\}$. It is clear that $||\Tilde{\xi_i}-z^*||_2 > ||\Tilde{\xi_i}-z_0||_2$. However, $||\Tilde{\xi_i}-z^*||_2=\inf_{\rho^{\text{T}}z \leq y}||\Tilde{\xi_i}-z||_2$, which gives a contradiction. Therefore, $\rho^{\text{T}} z^* = y$. \VS{We can write $\inf_{\rho^{\text{T}}z \leq y}||\Tilde{\xi_i}-z||_2$ equivalently as}
\begin{align}\label{in-bet-app}
    & \inf{||\Tilde{\xi_i}-z||_2} \nonumber\\
    \mbox{s.t.}\quad \quad &\rho^{\text{T}}z=y.
\end{align}
\VS{Using the KKT conditions, the optimal solution of \eqref{in-bet-app} satisfies} 
\begin{align}\label{eq111}
     2(\Tilde{\xi_i}-z^*)-\lambda \rho = 0,
\end{align}
where $\lambda$ is the Lagrange multiplier associated with the equality constraint.
By taking the inner product of \eqref{eq111} with $\rho$, we have
\begin{align*}
   2(\Tilde{\xi_i}-z^*)^T \rho - \lambda ||\rho||^2_2 = 0,
\end{align*}
which implies that
\begin{align}\label{formule lambda} \lambda = \frac{2(\Tilde{\xi_i}-z^*)^T \rho}{||\rho||^2_2}.
\end{align}
On the other hand, by taking inner product of \eqref{eq111} with $\Tilde{\xi_i}-z^*$, we get
\begin{align}\label{rrtt}
    &2||\Tilde{\xi_i}-z^*)||_2^2  - \lambda \rho^T(\Tilde{\xi_i}-z^*)=0.
\end{align}
\VS{Using \eqref{formule lambda}, \eqref{rrtt} and $\rho^Tz^*=y$, we have}
\begin{align*}
 ||\Tilde{\xi_i}-z^*)||_2 = \frac{\rho^T\Tilde{\xi_i}-y}{||\rho||_2}.
\end{align*}

\section{Proof of Lemma \ref{sufficient condition to apply big-M method}} \label{app_D}

Let $(y,\rho)$ be a feasible solution of \eqref{extra deterministic wasserstein full support} which implies that the constraint $(\text{i})$ of \eqref{DRCCMDP-opt} holds. \VS{Since, reference distribution $\nu$ always belongs to uncertainty set \eqref{uncertainty Wasserstein set}, we have}
\begin{align}\label{gjgj}
    \frac{1}{H}\sum_{i=1}^{H}\mathbf{1}_{\left\{\rho^{\text{T}}\Tilde{\xi_i} \leq y\right\}} = \mathbb{P}_{\nu}\left(\rho^{\text{T}} \hat{R} \leq y \right) \leq \epsilon.
\end{align}
 It follows from \eqref{gjgj} that there exists $\Tilde{\xi}_i$ such that $\rho^{\text{T}}\Tilde{\xi_i}>y$ which in turn implies that
\begin{align}\label{upper bound y}
    & y < \max_{i = 1,\ldots,H}(\rho^{\text{T}}\Tilde{\xi_i}) < \max_{i = 1,\ldots,H}|\rho^{\text{T}}\Tilde{\xi_i}| + \frac{\theta_W}{\epsilon}||\rho||_2.
\end{align}
Moreover, from the constraint $(\text{iii})$ of \eqref{extra deterministic wasserstein full support}, we have
\begin{align}\label{lower bound y}
    &y \geq \min_{i = 1,\ldots,H}(\rho^{\text{T}}\Tilde{\xi_i})-\frac{\theta_W}{\epsilon}||\rho||_2 \geq - \max_{i = 1,\ldots,H}|\rho^{\text{T}}\Tilde{\xi_i}|-\frac{\theta_W}{\epsilon}||\rho||_2.
\end{align}
Using \eqref{lower bound y} and \eqref{upper bound y}, we get the following inequality
\begin{align}\label{in-bet-appC}
    |y| + |\rho^{\text{T}}\Tilde{\xi_i}| \leq 2 \max_{i = 1,\ldots,H}|\rho^{\text{T}}\Tilde{\xi_i}| + \frac{\theta_W}{\epsilon}||\rho||_2,\ \forall \ i=1,\ldots,H.
\end{align}
\VS{
Using \eqref{in-bet-appC}, Cauchy-Schwartz inequality, and the fact that $\rho$ is a probability measure, we have
\[
|y-\rho^{\text{T}} \Tilde{\xi_i}|\le M.
\]
}

\section{ Proof of Lemma \ref{strong duality nonnegative wasserstein}} \label{app_E}

The optimization problem $\inf_{z \in \mathbb{R}_+^{|\mathcal{K}|}, \rho^{\text{T}}z \leq y}||\Tilde{\xi_i}-z||_2$ can be reformulated as  following SOCP problem
\begin{align}\label{vp nonnegative wasserstein}
    &\min\quad t \nonumber\\
     \mbox{s.t.} \,\,\,\,\,
    & (\text{i}) \quad \rho^{\text{T}} z \leq y,\nonumber\\
    & (\text{ii}) \quad t \geq ||\Tilde{\xi_i}-z||_2,\nonumber\\
    & (\text{iii}) \quad z \in \mathbb{R}_+^{|\mathcal{K}|}.
\end{align}
\VS{
The Lagrangian dual problem of \eqref{vp nonnegative wasserstein} is given by
\begin{align*}
     \max_{\lambda_i \geq 0, \zeta_i \in \mathbb{R}_+^{|\mathcal{K}|}, \beta \geq 0} \quad \min_{t \in \mathbb{R}, z \in \mathbb{R}^{|\mathcal{K}|}} \mathcal{L}(t,\rho,z, \lambda_i, \beta, \zeta_i),
\end{align*}
where $\mathcal{L}(t,z, \lambda_i, \beta, \zeta_i)= t + \lambda_i(\rho^{\text{T}}z-y) - \zeta_i^{\text{T}} z + \beta (||\Tilde{\xi_i}-z||_2-t)$ such that $\lambda_i$, $\beta$ and $\zeta_i$ are the Lagrange multipliers associated with constraints  $(\text{i})$, $(\text{ii})$ and $(\text{iii})$ of \eqref{vp nonnegative wasserstein}, respectively.}
The inner minimization problem can be written as 
\begin{equation}\label{oooo}
    J(\lambda_i,\zeta_i,\beta)
    = \min_{t \in \mathbb{R},z \in \mathbb{R}^{|\mathcal{K}|}} \left\{t(1-\beta) + \beta ||\Tilde{\xi_i}-z||_2+\lambda _i\rho^{\text{T}} z - \zeta_i^{\text{T}} z - \lambda_i y\right\}.
\end{equation}
\VS{It is easy to see that $J(\lambda_i,\zeta_i,\beta)=-\infty$ if $\beta \neq 1$ and
it implies that the dual objective function value is $-\infty$. By using the strong duality of a primal-dual pair of SOCPs, the objective function value of primal problem is $-\infty$, i.e., $\inf_{z \in \mathbb{R}_+^{|\mathcal{K}|}, \rho^{\text{T}}z \leq y}||\Tilde{\xi_i}-z||_2=-\infty$ which is a contradiction.} Therefore, $\beta=1$ and the dual problem of \eqref{vp nonnegative wasserstein} is given by
\begin{align*}
   \max_{\lambda_i \geq 0, \zeta_i \in \mathbb{R}_+^{|\mathcal{K}|}} J(\lambda_i,\zeta_i,1).
\end{align*}
Using a change of variable $z_1= \Tilde{\xi_i}-z$, we have
\begin{align*}
    J(\lambda_i,\zeta_i,1) = \min_{z_1 \in \mathbb{R}^{|\mathcal{K}|}} \left\{||z_1||_2+(\zeta_i-\lambda_i \rho)^{\text{T}} z_1\right\} + \lambda_i( \rho^{\text{T}}\Tilde{\xi_i}-y)-\zeta_i^{\text{T}}\Tilde{\xi_i}.
\end{align*}
\VS{The above minimization problem is unbounded unless $||\zeta_i-\lambda_i \rho||_2 \le 1$ and it leads to the following dual problem of \eqref{vp nonnegative wasserstein}.}
\begin{align}\label{vD nonnegative wasserstein}
    &\max\quad \lambda_i(\rho^{\text{T}}\Tilde{\xi_i}-y) - \zeta_i^{\text{T}}\Tilde{\xi_i} \nonumber\\
     \mbox{s.t.} \,\,\,\,\,
    & (\text{i}) \quad ||\zeta_i-\lambda_i \rho||_2 \leq 1, \nonumber\\
    & (\text{ii}) \quad \lambda_i \geq 0, \zeta_i \in \mathbb{R}_+^{|\mathcal{K}|}.
\end{align}

\nocite{*}
\bibliographystyle{acm}
\bibliography{main.bib}

\end{document}